\documentclass{amsart}
\usepackage[utf8]{inputenc}
\usepackage{geometry}

\usepackage{graphicx}
\usepackage{amsmath, amsfonts, amssymb,amsthm,mathtools,bm,color,mathrsfs}

\linespread{1.6}
\usepackage{latexsym}
\usepackage{amscd}
\usepackage[colorlinks,linkcolor=blue,anchorcolor=blue,citecolor=blue]{hyperref}
\usepackage{multirow}
\usepackage{indentfirst}
\setlength{\parindent}{2em}
\usepackage{enumerate}
\usepackage[english]{babel}\usepackage{csquotes}
\usepackage{comment}
\usepackage{appendix}
\usepackage{url}
\numberwithin{equation}{section}
\newtheorem{theorem}{Theorem}[section] 
\newtheorem{lemma}[theorem]{Lemma}     
\newtheorem{corollary}[theorem]{Corollary}
\newtheorem{proposition}[theorem]{Proposition}
\newtheorem*{conjecture}{Conjecture}
\newtheorem*{problem*}{Problem}

\theoremstyle{definition}
\newtheorem{definition}[theorem]{Definition}

\newtheorem*{remark}{Remark}

\newcommand{\CC}{\mathbb C}

\newcommand{\PP}{\mathbb P}
\newcommand{\GG}{\mathbb G}
\newcommand{\ord}{\operatorname{ord}}
\newcommand{\exc}{\operatorname{exc}}


\title[On Vojta's general abc conjecture and Campana's orbifold conjecture ]{A complex case of Vojta's general abc conjecture and cases of Campana's orbifold conjecture}
\author{Ji Guo}
\address{School of Mathematics and Statistics, HNP-LAMA \\ Central South University \\ Changsha  410075 \\ China} \email{221250@csu.edu.cn}

\author{Julie Tzu-Yueh Wang}
\address{Institute of Mathematics\\ Academia Sinica\\6F, Astronomy-Mathematics Building\\No. 1, Sec. 4, Roosevelt Road \\ Taipei
	10617\\Taiwan} \email{jwang@math.sinica.edu.tw}

\begin{document}
	
	
	\begin{abstract}
		We showed a truncated second main theorem of level one with explicit exceptional sets for analytic maps into  $\mathbb P^2$ intersecting the coordinate lines with sufficiently high multiplicities.   The proof is based on a GCD theorem for an analytic map $f:\mathbb C\mapsto\mathbb P^n$ and two homogeneous  polynomials in $n+1$ variables with  coefficients which are meromorphic functions of  the same growth as  the analytic map $f$.		As applications, we studied some cases of Campana's orbifold conjecture   for  $\mathbb P^2$ and   finite ramified covers of  $\mathbb P^2$ with three components admitting sufficiently large multiplicities.   Moreover,  we explain how to adapt our methods to  show the strong  Green-Griffiths-Lang conjecture for  a   finite ramified covers of  $\mathbb G_m^2$.
\end{abstract}

	\thanks{2020\ {\it Mathematics Subject Classification.} Primary 30D35,  
		Secondary 11J97, 30A99.}
	\thanks{The first author is supported by National Natural Science Foundation of China NO. 12201643. The second named is supported in part by Taiwan's MoST grant 110-2115-M-001-009-MY3.}

	\baselineskip=16truept 
	\maketitle \pagestyle{myheadings}
	
	\section{Introduction}
	The Green-Griffiths-Lang conjecture in the non-compact case (see \cite[Proposition~15.3]{vojta2009diophantine}) 
	reads as follows: 
	{\it If $X$ is  a complex smooth projective variety, $D$ is a normal crossing divisor on $X$, and $X\setminus D$ is a variety of log general type, then a holomorphic map $f:\mathbb C\to X\setminus D$ cannot have Zariski-dense image.}  
	Instead of considering a holomorphic map  $f:\mathbb C\to X$ with image not intersecting the support of $D$,
	Campana took into account the  the multiplicities of $f$  intersecting the support of $D$ in \cite{Campana05}.  To formulate Campana's conjecture,   let   $X$ be a complex smooth projective variety.	
	Recall that an \textsl{orbifold divisor} \(\Delta\) is a linear combination $\sum_{Y\subset X}c_{\Delta}(Y)\cdot Y$, 
	where \(Y\) ranges over all irreducible divisors of \(X\), and
	the \textsl{orbifold coefficients} are rational numbers \(c_{\Delta}(Y)\in[0,1]\cap\mathbb Q\) such that all but finitely many are zero.
	Equivalently,
	\[
	\Delta
	=
	\sum_{\{Y\subset X\}}
	\left(
	1-m^{-1}_{\Delta}(Y)
	\right)
	\cdot
	Y,
	\]
	where only finitely   \(m_{\Delta}(Y)\in[1,\infty]\cap\mathbb Q\) are larger than \(1\).
	An orbifold pair is a pair \((X,\Delta)\), where $\Delta$ is an orbifold divisor.  The pair interpolates between the compact case where \(\Delta=\varnothing\) and the pair \((X,\varnothing)=X\) has no orbifold structure, and the {\it open}, or {\it logarithmic case} where $c_{\Delta}(D)= 1$ for all $c_{\Delta}(D)\ne 0$, and we identify \((X,\Delta)$ with $X\setminus{\rm Supp}(\Delta)\).
	Let $D_1,\hdots,D_q$ be irreducible divisors of $X$.  Let $m_1,\hdots,m_q\in(1,\infty]\cap\mathbb Q$ and 
	$\Delta=(1-m_1^{-1})D_1+\cdots+	(1-m_q^{-1})D_q$.
	Here,  consider an orbifold entire curve $f : \mathbb C \to  (X,\Delta) $, 
	i.e. an entire curve $f : \mathbb C \to X$  such that  $f(\mathbb C) \not\subset {\rm Supp}(\Delta)$
	and 
	${\rm mult}_t(f^* D_i) \ge m_i$ for all $i$ and all $t \in\mathbb C$ with $f(t) \in D_i$. Finally, we say that an orbifold pair \((X,\Delta)\) is of {\it general type} if $K_X+ \Delta$ is big,  where $K_X$ is a canonical divisor on $X$.
	
	Recall the following
	natural generalization to the orbifold category of the (strong) Green-Griffiths-Lang conjecture.  
	
	\begin{conjecture}[Campana] If  \((X,\Delta)\)  is an orbifold pair of general type, then there exists a proper closed
		subvariety $Z\subset  X $ containing the images of all nonconstant orbifold entire curves $f : \mathbb C \to  (X,\Delta).$
	\end{conjecture}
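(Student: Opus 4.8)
The statement is Campana's conjecture in full generality, which simultaneously refines the compact Green--Griffiths--Lang conjecture (the case $\Delta=0$) and its logarithmic form (the case $m_i=\infty$), both of which are open; so a complete proof is out of reach, and what follows is the program I would pursue, with the genuine obstructions flagged. The line of attack is the orbifold analogue of the jet-differential method. First I would pass to a log resolution so that $\mathrm{Supp}(\Delta)$ has simple normal crossings, and then use Campana's theory of the core map $c\colon(X,\Delta)\to(Y,\Delta_Y)$ onto an orbifold base of general type whose general fibres are \emph{special}: any orbifold entire curve $f$ either has image in a fibre of $c$ (the ``vertical'' case), or $c\circ f$ is an orbifold entire curve of $(Y,\Delta_Y)$, which is handled by induction on $\dim Y$. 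Granting orbifold hyperbolicity of the special fibres---itself part of the Campana program---the vertical locus would sit inside a proper closed subset, and one is reduced to the case $\dim Y=\dim X$, i.e.\ to $K_X+\Delta$ big and, on a suitable model, nef and big.

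The heart of the plan is to produce enough \emph{orbifold jet differentials}. I would work with Campana's sheaves $E^{\mathrm{orb}}_{k,m}(X,\Delta)$ of order-$k$, weight-$m$ polynomial differential operators with poles along $\mathrm{Supp}(\Delta)$ whose pole orders are matched to the multiplicities $m_i$, and prove the orbifold Green--Griffiths--Lang positivity: for $k$ large and any ample divisor $A$, the dimension $h^0\bigl(X,\,E^{\mathrm{orb}}_{k,m}(X,\Delta)\otimes\mathcal O(-A)\bigr)$ grows at least like a positive constant times $m^{(k+1)\dim X}$ as $m\to\infty$. Over $\mathbb C$ this is a holomorphic-Morse-inequalities computation on the tautological bundle of the orbifold Demailly tower $X_k\to\cdots\to X_1\to X$, modelled on Demailly's compact argument, the new point being to account for the contribution of the fractional orbifold structure to the Chern classes on $X_k$ and to make the estimate uniform in the denominators of the $m_i$.

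With such jet differentials in hand I would invoke the logarithmic/orbifold fundamental vanishing theorem (Green--Griffiths, Siu--Yeung, Demailly): a nonconstant orbifold entire curve $f\colon\mathbb C\to(X,\Delta)$ lifts to a holomorphic curve $f_{[k]}\colon\mathbb C\to X_k$ on which the chosen sections are regular---this is exactly where the Campana condition $\mathrm{mult}_t(f^*D_i)\ge m_i$ is essential, since it forces $f$ to meet $\mathrm{Supp}(\Delta)$ with tangency high enough to absorb the poles built into $E^{\mathrm{orb}}_{k,m}(X,\Delta)$---whence $P\circ f_{[k]}\equiv 0$ for every $P\in H^0\bigl(X,E^{\mathrm{orb}}_{k,m}(X,\Delta)\otimes\mathcal O(-A)\bigr)$. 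The image in $X$ of the common zero locus of all such $P$ is then a proper closed subvariety $Z_0\subsetneq X$ containing the image of every nonconstant orbifold entire curve; combining $Z_0$ with the vertical locus from the first step and running the standard Noetherian induction on the irreducible components of $Z_0$---restricting $(X,\Delta)$ to each component, which stays of general type wherever adjunction applies---yields the desired $Z$.

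The step I expect to be the main obstacle is the second one: orbifold Green--Griffiths--Lang positivity is not known even for $\Delta=0$, where it is precisely the compact Green--Griffiths--Lang conjecture, and the orbifold refinement compounds the difficulty because the jet tower carries the fractional structure of $\Delta$, so the Morse-inequality estimate must be controlled uniformly in the multiplicities. A secondary obstacle is the first step, since orbifold hyperbolicity of special fibres and the compatibility of the core map with entire curves are themselves open. It is for these reasons that the present paper does not attempt the conjecture in full, but instead establishes it unconditionally in the cases described in the abstract---$\mathbb P^2$, and certain finite ramified covers of $\mathbb P^2$ and of $\mathbb G_m^2$ with three components of sufficiently large multiplicity---where the GCD theorem and the truncated second main theorem of level one take the place of the missing jet-differential positivity.
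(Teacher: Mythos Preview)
You have correctly identified the essential point: the paper does \emph{not} prove this statement. It is stated with the \texttt{conjecture} environment, not as a theorem, and the paper makes no attempt at it in full generality; instead it establishes the special cases recorded in Theorems~1.1--1.3. So there is no ``paper's own proof'' to compare against, and your final paragraph is an accurate summary of what the paper actually does.

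That said, it is worth contrasting your outlined program with the paper's methods for its special cases, since they are genuinely different in spirit. Your strategy is the orbifold jet-differential program (Campana core map, orbifold Demailly tower, holomorphic Morse inequalities, fundamental vanishing), which aims at the conjecture uniformly but---as you flag---hits the open obstruction that Green--Griffiths--Lang positivity is unknown even for $\Delta=0$. The paper takes an entirely Nevanlinna-theoretic route, specific to $\mathbb P^2$ and its finite covers with three components: the core new input is a GCD theorem (Theorem~4.1) for polynomials whose coefficients lie in the field of meromorphic functions of the \emph{same} growth as the map, proved via a second main theorem with moving targets (Theorem~4.2). This is combined with the differential operator $D_{\mathbf u}$ of \S5.1 to bound $N_{G(\mathbf g)}(0,r)-N^{(1)}_{G(\mathbf g)}(0,r)$, yielding the level-one truncated estimate (Theorem~1.4), from which Theorems~1.1--1.3 follow by elementary reductions. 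What the paper's approach buys is explicit exceptional sets and effective bounds on the required multiplicities, at the cost of being restricted to the toric/near-toric setting; what your approach would buy is uniformity in $(X,\Delta)$, at the cost of depending on positivity results that are currently out of reach.

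One minor correction to your last sentence: the paper does not prove Campana's conjecture itself for finite ramified covers of $\mathbb G_m^2$; the $\mathbb G_m^2$ discussion in \S6.4 concerns only the strong Green--Griffiths--Lang conjecture (the case $m_i=\infty$), not the orbifold conjecture with finite multiplicities.
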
 
	This conjecture has been proved by Brotbek and Deng in  \cite{BrotbekDeng2019} for $(X,\Delta)$ with $\Delta$ consisting of only one (general) component and sufficiently large multiplicity, and by Campana, Darondeau and Rousseau  in \cite{CDRousseau2020}
	for the case $X=\mathbb P^2$ and 	 $\Delta$ consisting of 11 lines with orbifold multiplicity 2.  Indeed, they both proved orbifold hyperbolicity, i.e. any orbifold entire curve  $f : \mathbb C \to  (X,\Delta) $ in their situation is constant.   It has been shown recently by Rousseau, Turchet and the second author in \cite{RTW2021} that the oribifold entire curve $f$ is algebraically degenerate for the case of smooth projective surfaces with $\Delta=\sum_{i=1}^q (1-\frac 1{m_i})D_i$, where $m_i$ is sufficiently large for each $i$ and $q\ge 4$ by the intersection criteria imposed on the $D_i$.
	
	The first major purpose of this article is to study Campana's conjecture for $\mathbb P^2$ and  its ramified covers with at least three components admitting sufficiently large multiplicities.  We now state the results in this direction.
	
	\begin{theorem}\label{orbifoldGGL}
		Let   $\Delta_0$  be an  orbifold divisor of $\mathbb P^2(\mathbb C)$   and  $H_1, H_2, H_{3}$ be three distinct lines  in $\mathbb P^2(\mathbb C)$.  Assume that the support of $\Delta_0$ and $H_1,H_2, H_{3}$ are in general position. 
		Let $m_i\in(1,\infty]\cap\mathbb Q$, $1\le i\le n$,  and  
		$\Delta=\Delta_0+(1-\frac 1{m_1})H_1+(1-\frac 1{m_2})H_2+(1-\frac 1{m_3})H_3$.  Assume that $\deg \Delta>3$.
		Then there exists a proper Zariski closed subset $W$ of $\mathbb P^2$  and  an effectively computable positive integer $\ell$  such that the image of any nonconstant  orbifold entire curve  ${\mathbf f} : \mathbb C \to  (\mathbb P^2,\Delta) $ with $\min\{m_1,m_2,m_{3}\}\ge\ell$ must be contained in $W$.	
	\end{theorem}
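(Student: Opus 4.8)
\emph{Proof plan.} The plan is to normalise $H_1,H_2,H_3$ to the coordinate lines of $\mathbb P^2$, convert the orbifold multiplicity conditions into bounds on truncated counting functions, feed these into the truncated second main theorem of level one for maps into $\mathbb P^2$ meeting the coordinate lines with high multiplicity (the analytic core of the paper, obtained from the GCD theorem for $f\colon\mathbb C\to\mathbb P^n$ and two homogeneous polynomials with small meromorphic coefficients), and let the general-type hypothesis $\deg\Delta>3$ force a contradiction. First, after a projective linear change of coordinates $\phi\in\PGL_3(\mathbb C)$ we may assume $H_i=\{x_{i-1}=0\}$; replacing $\mathbf f$ by $\phi\circ\mathbf f$ — still a nonconstant orbifold entire curve for $(\mathbb P^2,\phi_*\Delta)$, with $\supp(\phi_*\Delta_0)$ in general position with the coordinate lines — it suffices to produce the exceptional set for the normalised pair and pull it back by $\phi$. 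Write $\mathbf f=[f_0:f_1:f_2]$ with $f_0,f_1,f_2$ entire without common zeros; the orbifold condition along $H_i$ says precisely that every zero of $f_{i-1}$ has multiplicity $\ge m_i$, so that $N^{(1)}_{\mathbf f}(r,H_i)\le\tfrac1{m_i}N_{\mathbf f}(r,H_i)\le\tfrac1{m_i}T_{\mathbf f}(r)+O(1)$. Writing $\supp(\Delta_0)=D_1\cup\cdots\cup D_k$ with $D_j=\{P_j=0\}$ of degree $d_j$ and $\mu_j:=(1-c_{\Delta_0}(D_j))^{-1}\in(1,\infty]$, the orbifold condition along $D_j$ gives likewise $N^{(1)}_{\mathbf f}(r,D_j)\le\tfrac{d_j}{\mu_j}T_{\mathbf f}(r)+O(1)$.

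Since $\supp(\Delta_0)+H_1+H_2+H_3$ is in general position, I would then apply the truncated second main theorem of level one to $\mathbf f$ and this divisor. It provides an effectively computable proper Zariski-closed $W\subset\mathbb P^2$ (its explicit exceptional set) and, for large $\ell$, an error $\varepsilon(\ell)$ with $\varepsilon(\ell)\to 0$ as $\ell\to\infty$, such that if $\min\{m_1,m_2,m_3\}\ge\ell$ and $\mathbf f(\mathbb C)\not\subset W$ then
\[
\Bigl(\sum_{j=1}^k d_j-\varepsilon(\ell)\Bigr)T_{\mathbf f}(r)\ \le\ \sum_{j=1}^k N^{(1)}_{\mathbf f}(r,D_j)+\sum_{i=1}^3 N^{(1)}_{\mathbf f}(r,H_i)+o\bigl(T_{\mathbf f}(r)\bigr)
\]
for all $r$ outside a set of finite Lebesgue measure. (The left coefficient $\sum_j d_j$ is $\deg\supp(\Delta_0)+3-3$; the point of the GCD theorem is exactly that, once $\mathbf f$ meets the coordinate lines with multiplicity $\ge\ell$ and avoids $W$, the contact-excess $N_{\mathbf f}(r,D_j)-N^{(1)}_{\mathbf f}(r,D_j)$ is swept into the error, making truncation level one — not merely level two as in Cartan's theorem — available.)

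Substituting the bounds of the first paragraph into this inequality and rearranging yields $\deg\Delta_0=\sum_j d_j(1-\mu_j^{-1})\le\sum_i m_i^{-1}+\varepsilon(\ell)+o(1)$, i.e.\ $\deg\Delta-3=\deg\Delta_0-\sum_i m_i^{-1}\le\varepsilon(\ell)+o(1)$. Now $\deg\Delta_0\ge\deg\Delta-3>0$, so $\deg\Delta_0$ is a fixed positive number, and $\deg\Delta-3\ge\deg\Delta_0-3/\ell$ whenever $m_i\ge\ell$; hence, choosing $\ell$ so large (depending only on $\Delta_0$) that $\varepsilon(\ell)+3/\ell<\deg\Delta_0$, the displayed inequality is impossible as $r\to\infty$ unless $T_{\mathbf f}(r)$ stays bounded, i.e.\ unless $\mathbf f$ is constant. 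This contradiction proves $\mathbf f(\mathbb C)\subset W$.

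The substance of the argument is thus concentrated in the truncated second main theorem of level one invoked in the second paragraph — equivalently in the GCD theorem behind it and in the identification of its explicit exceptional set — and once that is granted the orbifold statement follows from the short Nevanlinna computation above. The two points that still demand care are the translation of the qualitative hypothesis ``$\supp(\Delta_0)$ and $H_1,H_2,H_3$ in general position'' into the precise hypotheses needed to apply the level-one theorem (in particular handling a reducible $\supp(\Delta_0)$, which may be done by running the argument on a single component $D_j$ with $c_{\Delta_0}(D_j)>0$, already enough for the contradiction), and the verification that $\ell$ and $W$ may be taken to depend only on $\Delta_0$, which rests on the definite positive lower bound for $\deg\Delta_0$ guaranteed by $\deg\Delta>3$.
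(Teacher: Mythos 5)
Your argument is correct and follows the same route as the paper: normalise $H_1,H_2,H_3$ to the coordinate lines, invoke the level-one truncated second main theorem (the content of Theorem~\ref{main_thm_1}(ii), whose proof is where the GCD machinery lives), use the orbifold multiplicity conditions to bound the truncated counting functions, and let $\deg\Delta>3$ force the contradiction. The only differences from the paper's write-up are cosmetic. The paper bundles the components $D_1,\dots,D_k$ of $\supp\Delta_0$ into a single square-free $G=G_1\cdots G_k$ and then exploits nothing more than the crude fact that every zero of $G(\mathbf f)$ has multiplicity $\ge 2$ (because $c_{\Delta_0}(D_j)>0$ forces $\mu_j>1$, hence $\mathrm{mult}\ge 2$), which yields $N^{(1)}_{G(\mathbf f)}(0,r)\le\tfrac12\deg G\,T_{\mathbf f}(r)+O(1)$; combined with Theorem~\ref{main_thm_1}(ii) at any $\epsilon<1/3$, this gives the contradiction $(\tfrac12\deg G-\epsilon)T_{\mathbf f}(r)\le_{\exc}O(1)$ in one line, with no need to track the individual $\mu_j$ or to bring $\deg\Delta_0$ into the inequality. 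You instead keep the components separate, retain the sharper $1/\mu_j$ and $1/m_i$ bounds, and rearrange to the inequality $\deg\Delta_0\le\sum 1/m_i+\varepsilon(\ell)+o(1)$, which you then defeat using $\deg\Delta_0\ge\deg\Delta-3>0$. This works, but it reproves with extra bookkeeping what the paper obtains directly: the extra term $\sum N^{(1)}_{\mathbf f}(r,H_i)$ you add to the right-hand side of the second main theorem is already absorbed into the statement of Theorem~\ref{main_thm_1}(ii) (whose hypotheses include that $\mathbf f$ is an orbifold curve meeting the coordinate lines with multiplicity $\ge\ell$), so reinstating it and then re-estimating it is redundant. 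Also note that you should pick $\lceil\mu_j\rceil$ and $\lceil m_i\rceil$ in the $N^{(1)}\le\tfrac1{\mu}N$ steps since multiplicities are integers; since $\lceil\mu\rceil\ge\mu$ this does not affect the bounds, but the paper sidesteps this by using $\lceil\mu_j\rceil\ge 2$ uniformly.
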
 
	\begin{remark}\mbox{}
		\begin{enumerate}
			\item The condition that $\deg \Delta>3$ is equivalent to  that $(\mathbb P^2, \Delta)$ is of general type.
			\item The proper Zariski closed subset $W$ of $\mathbb P^2$  can be constructed explicitly.
		\end{enumerate}
	\end{remark} 
	
	\begin{theorem}\label{GG_conj}
		Let 
		$F_i$, $1\le i\le 3$, be homogeneous irreducible polynomials of positive degrees  in $\mathbb C[x_0,x_1,x_2]$.
		Assume that the plane curves  $D_i:=[F_i=0]\subset\mathbb P^2(\mathbb C)$,  $1\le i\le 3$,   intersect transversally.  
		Let   $m_i\in(1,\infty]\cap\mathbb Q$, $1\le i\le 3$ and	$\Delta= (1-\frac 1{m_1})D_1+ (1-\frac 1{m_2})D_2+(1-\frac 1{m_{3}})D_{3}$.   Suppose that  $\deg \Delta>3$.
		Then there exist  two effectively computable  positive integers $\ell$ and  $N$ such that if $\min\{m_1,m_2,m_{3}\}\ge\ell$ then  the image of any  orbifold entire curve  ${\mathbf f} : \mathbb C \to  ( \mathbb P^2,\Delta) $  is contained in a plane curve of degree bounded by $N$.
	\end{theorem}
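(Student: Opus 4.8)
The plan is to reduce Theorem~\ref{GG_conj} to the case of three coordinate lines, to which the truncated second main theorem of level one applies, and then to feed the orbifold multiplicity hypothesis and the general type assumption $\deg\Delta>3$ into that estimate; the exceptional curve of bounded degree will be the pull-back, under the reducing map, of the explicit exceptional set in the coordinate-line case.

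First I would fix a reduced representation ${\mathbf f}=[f_0:f_1:f_2]$ by entire functions without common zeros, put $d_i=\deg F_i$ and $g_i:=F_i(f_0,f_1,f_2)$, and record the consequence of the orbifold condition: every zero of the entire function $g_i$ has multiplicity at least $m_i$, so by the First Main Theorem
\[
\overline N(r,1/g_i)\ \le\ \tfrac1{m_i}\,N(r,1/g_i)\ \le\ \tfrac{d_i}{m_i}\,T_{\mathbf f}(r)+O(1).
\]
Set $D=\operatorname{lcm}(d_1,d_2,d_3)$, $a_i=D/d_i$, and $\phi=[F_1^{a_1}:F_2^{a_2}:F_3^{a_3}]$. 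Since $D_1,D_2,D_3$ intersect transversally (so that, in particular, $D_1\cap D_2\cap D_3=\varnothing$), $\phi\colon\mathbb P^2\to\mathbb P^2$ is a finite morphism with $\phi^{*}L_i=a_iD_i$ for the coordinate lines $L_0,L_1,L_2$; its ramification divisor $R$ decomposes as $R=\sum_i(a_i-1)D_i+R_0$ with $\deg R_0=\sum_i d_i-3$. Hence, for ${\mathbf f}$ nonconstant, $\psi:=\phi\circ{\mathbf f}$ is a nonconstant holomorphic map $\mathbb C\to\mathbb P^2$ with $T_\psi(r)=D\,T_{\mathbf f}(r)+O(1)$ which meets each $L_i$ only at the isolated points where ${\mathbf f}(t)\in D_i$, there with multiplicity at least $a_im_i\ge (D/\max_j d_j)\,\ell$ — as large as we please once $\ell$ is large — and one has $N_\psi(r,L_i)=a_i\,N_{\mathbf f}(r,D_i)$ and $\overline N_\psi(r,L_i)=\overline N_{\mathbf f}(r,D_i)$.

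Now I would apply the truncated second main theorem of level one (the estimate announced in the abstract) to $\psi$ and the three coordinate lines, keeping track of the residual ramification $R_0$ of positive degree $\sum_i d_i-3$ (this is where the assumption that the $D_i$ are not all lines, implicit in $\deg\Delta>3$, enters). This yields an effectively computable proper Zariski closed $W_0\subset\mathbb P^2$ with $\deg W_0$ effectively bounded, together with an effective multiplicity threshold, so that if $\psi(\mathbb C)\not\subset W_0$ and $\min_i m_i$ exceeds the threshold then, outside a set of radii of finite measure,
\[
\Big(\sum_{i=1}^3 d_i-3\Big)\,T_{\mathbf f}(r)\ \le\ \sum_{i=1}^3\overline N_{\mathbf f}(r,D_i)+\varepsilon\,T_{\mathbf f}(r)+O(1).
\]
Combining this with the orbifold bound $\overline N_{\mathbf f}(r,D_i)\le \tfrac{d_i}{m_i}T_{\mathbf f}(r)+O(1)$ gives $\sum_i d_i-3\le \sum_i \tfrac{d_i}{m_i}+\varepsilon$; since $\deg\Delta=\sum_i(1-\tfrac1{m_i})d_i>3$ is exactly the inequality $\sum_i d_i-3>\sum_i \tfrac{d_i}{m_i}$, choosing $\varepsilon<\deg\Delta-3$ is a contradiction. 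Therefore $\psi(\mathbb C)\subset W_0$, and so ${\mathbf f}(\mathbb C)\subset\phi^{-1}(W_0)$, a plane curve of degree at most $D\cdot\deg W_0=:N$, which is effectively computable. The degenerate configurations are handled separately: if ${\mathbf f}$ is constant there is nothing to prove, and if ${\mathbf f}$ avoids one or more of the $D_i$ entirely — so that some $g_i$ is zero-free and $\psi$ degenerates — one runs the same argument with the corresponding smaller set of divisors, invoking a Borel-type lemma (equivalently, the GCD theorem with fewer divisors) on the zero-free functions among the $g_i$; these cases again place ${\mathbf f}$ in an explicit curve of effectively bounded degree.

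The main obstacle is the truncated second main theorem of level one itself: producing, for maps into $\mathbb P^2$ meeting the coordinate lines with high multiplicity, a second main theorem \emph{truncated at level one} rather than at Cartan's level $n=2$, and with an explicit exceptional set of effectively bounded degree. A naive Cartan- or Ru--Vojta-type filtration gives only truncation level $2$, which is too weak to beat the general type threshold after the reduction above; the improvement to level one is precisely what the GCD theorem for $f\colon\mathbb C\to\mathbb P^n$ and two homogeneous polynomials with small meromorphic coefficients buys us, with the high multiplicity hypothesis being what allows the GCD term to absorb the excess contribution of the pairwise intersections $D_i\cap D_j$. Tracking the constants through the filtration and the GCD estimate is what renders both $N$ and the threshold $\ell$ effectively computable.
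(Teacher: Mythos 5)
Your overall strategy --- pass to the finite morphism $\phi=[F_1^{a_1}:F_2^{a_2}:F_3^{a_3}]$ so that the $D_i$ become coordinate lines met with high multiplicity, and then exploit the residual ramification of degree $\sum_i d_i-3>0$ --- is indeed the paper's strategy. But the step you label as "the main obstacle" is not merely an obstacle: the displayed inequality
$\bigl(\sum_{i=1}^3 d_i-3\bigr)T_{\mathbf f}(r)\le\sum_{i=1}^3\overline N_{\mathbf f}(r,D_i)+\varepsilon\,T_{\mathbf f}(r)$
is essentially the general abc inequality for the pair $(\mathbb P^2, D_1+D_2+D_3)$, and no theorem in the paper (nor in the literature it relies on) delivers it for $\psi$ and the three coordinate lines "keeping track of $R_0$." Theorem \ref{main_thm_1} applies only to a divisor of the form $[G=0]+H_0+H_1+H_2$ with $[G=0]$ in general position with the coordinate lines, and its conclusion concerns the counting function of $G(\mathbf g)$, not a ramification-corrected SMT for the lines themselves. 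An SMT for $\psi$ against only the three coordinate lines has $q-n-1=0$ and yields nothing; the entire content of the proof is how to convert $R_0$ into a usable fourth divisor, and your proposal leaves that conversion as a black box.

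The paper's actual mechanism is concrete and different in its details. One shows the Jacobian determinant $G=\det(\partial F_i/\partial x_j)$ is non-constant (this is where $\deg\Delta>3$, i.e.\ $\sum d_i\ge 4$, is used --- not in the final numerical count), and, via Euler's formula and the transversality hypothesis, that some irreducible factor $\tilde G$ of $G$ is in general position with $D_1,D_2,D_3$; its image $\pi([\tilde G=0])=[A=0]$ is then in general position with the coordinate lines, and $\pi^*A=\tilde G^2H$. Because $A(\mathbf u)$ thus vanishes to order $\ge 2$ wherever $\tilde G(\mathbf f)$ vanishes, part (i) of Theorem \ref{main_thm_1} (the gcd bound $N_{A(\mathbf u)}-N^{(1)}_{A(\mathbf u)}\le_{\exc}\epsilon T_{\mathbf u}$) gives $N_{\tilde G(\mathbf f)}(0,r)\le_{\exc}\epsilon T_{\mathbf f}(r)$ outside an explicit exceptional set. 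The conclusion then comes from Theorem \ref{SMTmoving} applied \emph{downstairs on the source} $\mathbb P^2$ to the \emph{four} divisors $[\tilde G=0],D_1,D_2,D_3$ in general position, where $q-n-1=1$: the $\tilde G$-term is small by the gcd bound and the $F_j$-terms are small by the multiplicity hypothesis, giving $(1-\epsilon)T_{\mathbf f}\le_{\exc}2\epsilon T_{\mathbf f}$. If you want to rescue your write-up, you must supply these intermediate constructions (general position of the Jacobian factor, the relation $\pi^*A=\tilde G^2H$, and the four-divisor SMT) rather than invoking a level-one SMT for the coordinate-line configuration that is not available.
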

	
	\begin{theorem}\label{finitemorphism}
		Let  $ X$ be a complex  smooth  projective  surface   of dimension $2$ with a finite  morphism $ \pi:   X\to\mathbb P^2$.  
		Let $H_i=[x_{i-1}=0]$, $1\le i\le 3$,  be the  coordinate hyperplane divisors of   $ \mathbb P^2$, and 
		$D_i$ be the support of $\pi^* H_i$ (i.e. the sum of the components of $\pi^* H_i$ counted with multiplicity 1).  Let   $m_i\in(1,\infty]\cap\mathbb Q$, $1\le i\le 3$   and   $\Delta= (1-\frac 1{m_1})D_1+(1-\frac 1{m_2})D_2+(1-\frac 1{m_{3}})D_{3}$. Let $ Z\subset X$ be the ramification divisor of $\pi$  omitting components from the support of $\Delta$.  Assume that $\pi(Z)$ does not intersect the set of points  $\{(1,0, 0),(0,1, 0),(0,0,1)\} $ in $\mathbb P^2$. 
		If the orbifold pair $(X,\Delta)$ is   of  general type, then there exist  two   positive integers $\ell$ and  $N$ such that if 
		$\min\{m_0,m_1,m_2\}\ge\ell$,  then  the image any    ${\mathbf f} : \mathbb C \to  (X,\Delta) $  is contained in an algebraic curve in $X$ with degree bounded by $N$.
	\end{theorem}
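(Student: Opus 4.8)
The plan is to pass to $\mathbf g=\pi\circ\mathbf f\colon\CC\to\PP^2$ and reduce to the situation already treated for $\PP^2$ and its coordinate lines — that is, to Theorem~\ref{orbifoldGGL} and the truncated second main theorem of level one underlying it. Write $\pi^{*}H_i=\sum_j e_{ij}D_{ij}$, so that the $D_{ij}$ are exactly the components of $D_i$ and every ramification index $e_{ij}\ge 1$. First one disposes of the trivial cases: if $\mathbf g$ is constant then so is $\mathbf f$; and if $\mathbf g(\CC)$ lies in a component of $\pi(Z)$ then $\mathbf f(\CC)\subset\pi^{-1}(\pi(Z))$ already lies on an algebraic curve of degree bounded in terms of $\deg\pi$ and $\deg\pi(Z)$. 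So assume neither holds; note $\mathbf g(\CC)$ cannot lie in any $H_i$, since that would force $\mathbf f(\CC)\subset\pi^{-1}(H_i)=D_i\subset\supp\Delta$, contradicting that $\mathbf f$ is an orbifold entire curve.

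The key local computation is that $\mathbf g$ meets each coordinate line with large multiplicity: for any $t$ with $\mathbf g(t)\in H_i$ we have $\mathbf f(t)\in\supp(\pi^{*}H_i)=D_i$, hence $\ord_t\mathbf f^{*}D_i\ge m_i$, and therefore
\[
\ord_t\mathbf g^{*}H_i=\ord_t\mathbf f^{*}\pi^{*}H_i=\sum_j e_{ij}\,\ord_t\mathbf f^{*}D_{ij}\ \ge\ \sum_j\ord_t\mathbf f^{*}D_{ij}=\ord_t\mathbf f^{*}D_i\ \ge\ m_i .
\]
This holds for every such $t$, in particular over the coordinate vertices $(1,0,0),(0,1,0),(0,0,1)$, so $\mathbf g$ satisfies the high-multiplicity hypothesis of the level-one theorem along $H_1,H_2,H_3$ with multiplicities $\ge\min\{m_1,m_2,m_3\}$.

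It remains to produce the extra divisor that makes the relevant orbifold on $\PP^2$ of general type, and this is where the hypothesis that $(X,\Delta)$ is of general type enters. Writing $K_X=\pi^{*}K_{\PP^2}+R$ with $R=R_1+R_Z$, where $R_1=\sum_{ij}(e_{ij}-1)D_{ij}$ is the part of the ramification supported on $\supp\Delta$ and $R_Z$ the remainder (supported on $Z$), one has $R_1+\Delta=\sum_{ij}(e_{ij}-\tfrac1{m_i})D_{ij}\le\pi^{*}(\sum_i H_i)$, hence
\[
K_X+\Delta\ \sim\ \pi^{*}\big(K_{\PP^2}+\textstyle\sum_i H_i\big)+R_Z-\sum_{ij}\tfrac1{m_i}D_{ij}\ =\ R_Z-\sum_{ij}\tfrac1{m_i}D_{ij}\ \le\ R_Z .
\]
If $Z=0$ this makes $-(K_X+\Delta)$ effective, contradicting bigness; so $Z\ne 0$, and $\pi(Z)$ is a non-empty plane curve which, by hypothesis, avoids $\{(1,0,0),(0,1,0),(0,0,1)\}=\{H_i\cap H_j\}$ and is therefore in general position with $H_1,H_2,H_3$. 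One now feeds $\mathbf g$, the coordinate lines carrying multiplicity $\min\{m_1,m_2,m_3\}$, and the extra divisor $\pi(Z)$ into Theorem~\ref{orbifoldGGL}: since $\pi(Z)$ is non-empty, the orbifold degree exceeds $3$ once $\min\{m_1,m_2,m_3\}$ passes an effective bound, and one concludes that $\mathbf g(\CC)$ lies in an explicit proper Zariski closed $W'\subset\PP^2$. Pulling back, $\mathbf f(\CC)\subset\pi^{-1}(W')$, an algebraic curve (or point) in $X$ whose degree is at most $N:=c\,(\deg\pi)(\deg W')$; here $\ell$ is the larger of the effective bound above and the threshold in Theorem~\ref{orbifoldGGL}.

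The main obstacle, I expect, is the honest verification that $\mathbf g$ together with $\pi(Z)$ fits the level-one theorem and that $\deg W'$ is bounded independently of the $m_i$. The delicate point is that, unlike along the $H_i$, the map $\mathbf g$ need not meet $\pi(Z)$ with large multiplicity: $\pi$ may have unramified sheets over points of $\pi(Z)$, and where $\mathbf f$ crosses the corresponding curves in $X$ the map $\mathbf g$ acquires only simple zeros of the equation of $\pi(Z)$. One must therefore work with the second main theorem inequality behind Theorem~\ref{orbifoldGGL} directly — exploiting the multiplicity gain only along the coordinate lines and bounding the $\pi(Z)$-term by its trivial estimate $N^{(1)}_{\mathbf g}(r,\pi(Z))\le N_{\mathbf g}(r,\pi(Z))\le(\deg\pi(Z))\,T_{\mathbf g}(r)+O(1)$ — and check that the level-one truncation together with the presence of this fourth divisor is exactly enough to close the numerical bookkeeping; the case where $\pi(Z)$ has very small degree is where this is tightest. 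Finally, the uniformity of $\deg W'$ in the $m_i$ is the effectivity built into Theorem~\ref{orbifoldGGL}, and the same scheme — composition with $\pi$, high multiplicity along the coordinate lines, the branch curve as fourth divisor — is what adapts to the finite ramified covers of $\GG_m^2$.
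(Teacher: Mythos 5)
The setup is right—reducing to $\mathbf g=\pi\circ\mathbf f$, the high-multiplicity computation along the $H_i$, the non-vanishing of $Z$ and the general position of $\pi(Z)$ all match the paper. But the mechanism you invoke to exploit $\pi(Z)$ does not work. You correctly observe that $\mathbf g$ need not meet $\pi(Z)$ with large multiplicity (unramified sheets give only simple zeros of $F$), so Theorem~\ref{orbifoldGGL} cannot be applied to $\mathbf g$ with $\pi(Z)$ carrying any orbifold weight $>0$. Your fallback—feeding $\pi(Z)$ as a fourth divisor into the SMT behind Theorem~\ref{orbifoldGGL} and bounding its counting function by the trivial estimate $\frac{1}{\deg\pi(Z)}N_{\mathbf g}(\pi(Z),r)\le T_{\mathbf g}(r)+O(1)$—gives no gain whatsoever: with $q=4$ hypersurfaces in $\mathbb P^2$ the SMT yields $(1-\epsilon)T_{\mathbf g}(r)\le_{\exc}\sum_i\frac{1}{d_i}N^{(M)}_{\mathbf g}(D_i,r)$, and after you shrink the three $H_i$-terms the $\pi(Z)$-term alone already saturates the left side, so the ``numerical bookkeeping'' does not close. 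The case where $\pi(Z)$ has small degree is not merely ``tightest''; it is hopeless with the trivial estimate for any degree of $\pi(Z)$.

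What you are missing is the ramification doubling that the paper actually exploits, and it goes in the opposite direction from your remark. Along the branch component $Z_0\subset Z$, the divisor $\pi^*[F_0=0]$ has multiplicity $\ge 2$, so at any $t$ with $\mathbf f(t)\in Z_0$ the function $F_0(\mathbf g)$ vanishes to order at least $2\,\mathrm{mult}_t(\mathbf f^*Z_0)$. Hence $N_{\mathbf f}(Z_0,r)\le N_{F_0(\mathbf g)}(0,r)-N^{(1)}_{F_0(\mathbf g)}(0,r)$, and it is precisely the level-one truncation estimate in Theorem~\ref{main_thm_1}\,(i) that makes this right-hand side $\le_{\exc}\epsilon\,T_{\mathbf g}(r)$. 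This controls the counting function $N_{\mathbf f}(Z,r)$. The proximity function $m_{\mathbf f}(Z,r)$ is then controlled separately by applying the SMT in $\mathbb P^2$ to $[F=0],H_1,H_2,H_3$ to obtain a lower bound on $N_{F(\mathbf g)}(0,r)$, hence an upper bound on $m_{\mathbf g}([F=0],r)\ge m_{\mathbf f}(Z,r)+O(1)$. Adding the two gives $T_Z(\mathbf f,r)\le_{\exc}C\epsilon\,T_{\pi\circ\mathbf f}(r)$, which contradicts the bigness of $Z$ (which in turn follows from $(X,\Delta)$ being of general type, since $Z\sim D_1+D_2+D_3+K_X$). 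None of this goes through with the trivial estimate for the $\pi(Z)$-term; the whole point is that the truncated second main theorem of level one is what turns ramification into a genuine saving, and your proposal never uses that mechanism.
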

	
	\begin{remark}
		Let $D=D_1+D_2+D_{3}$.
		Recall that $X \setminus D$ is said to be   of {\it log-general type} if the divisor $D+K_X$ is big.
		It's clear that  $X \setminus D$ is of log-general type if  $(X,\Delta)$ is   of  general type.
		On the other hand, the condition that $X \setminus D$ is of log-general type implies $(X,\Delta)$ is   of  general type if each $m_i$, $1\le i\le 3$, is sufficiently large. (See \cite{LazarsfeldI} [Corollary 2.2.24].)
	\end{remark}  
	
	As noted in the beginning, the theorems above recover the corresponding results for Green-Griffiths-Lang conjecture when $m_i=\infty$ for $1\le i\le 3$.   Indeed, we can modify our proof of Theorem \ref{GG_conj} (resp. Theorem \ref{finitemorphism} ) to obtain the strong Green-Griffiths-Lang conjecture, i.e. there exists a proper Zariski closed subset $W$ of $X$ such that all  non-constant entire curves  ${\mathbf f} : \mathbb C \to X\setminus D $ are contained in $W$.
	When $X=\mathbb P^n$, the condition for $X\setminus D$ to be of log-general type is equivalent to the inequality $\deg D\ge n+2$.  When $D$ has  $n+1$ components, the Green-Griffiths-Lang conjecture is verified by Green for $n=2$ with $\deg D=4$ in \cite{green1974functional} under the assumption that ${\mathbf f}$ is of finite order, and    is solved for general $n$ by  Noguchi, Winkelman and Yamanoi  in \cite{noguchi2007degeneracy}.  Moreover,    the strong Green-Griffiths-Lang conjecture is also achieved in \cite{noguchi2007degeneracy} for   smooth surfaces of log-general type with a proper finite morphism $\pi: X\to A$, where $A$ is a semi-abelian surface.   In \cite{GSW20} and \cite{GSW22}, the case of $X=\mathbb P^n$ with $D$ consisting of  $n+1$ irreducible hypersurfaces parameterized by small functions, i.e.  moving targets of slow growth,  are studied for $\deg D= n+2$ and $\deg D\ge n+2$ respectively.

	The proofs of our main results related to Campana's orbifold conjecture  are based on the following theorem, which is of its own interest.  
	
	\begin{theorem}\label{main_thm_1}
		Let $G$ be a non-constant  homogeneous polynomial   in $\mathbb C[x_0,x_1,x_2]$ with no monomial factors and no repeated factors.  Let $H_i=[x_{i}=0]$, $0\le i\le 2$,  be the  coordinate hyperplane divisors of   $ \mathbb P^2$,
		$m_i\in(1,\infty]\cap\mathbb Q$, $0\le i\le 2$,  and   $\Delta= (1-\frac 1{m_0})H_0+(1-\frac 1{m_1})H_1+(1-\frac 1{m_{2}})H_{2}$.  Assume that the plane curve  $[G=0]$  and $H_i$, $0\le i\le 2$, are in general position.  
		Then for any $\epsilon >0$, there exists a proper Zariski closed subset $W$ and     effectively computable positive integers $\ell$  and $n$ such that for any non-constant orbifold entire curve $\mathbf{g}:\mathbb C\to  ( \mathbb P^2,\Delta)$ with $\min\{m_0,m_1,m_2\}\ge\ell$ and the image of $\mathbf{g}$ not contained in $W$,
		the following two inequalities hold.
		\begin{itemize}
			\item[\rm (i) ]  
			$N_{G(\mathbf{g})}(0,r)-N^{(1)}_{G(\mathbf{g})}(0,r)\le_{\exc} \epsilon T_{\mathbf{g}}(r)$, and 
			\item[\rm (ii) ]  
			$N^{(1)}_{G(\mathbf{g})}(0,r)\ge_{\rm exc}  (\deg  G- \epsilon)\cdot T_{\mathbf{g}}(r)$.
		\end{itemize} 
		Furthermore, the exceptional set $W$ is a finite union of closed subsets of $\mathbb P^2$ of the following type: $[x_0^{n_0}x_1^{n_1}x_2^{-n_0-n_2}=\beta]$, where $\beta\in\mathbb C$ and  $(n_0,n_1)$ is a pair of integers with $\max\{|n_0|,|n_1|\}\le n$.
	\end{theorem}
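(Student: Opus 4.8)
Write $\mathbf g=[g_0:g_1:g_2]$ with $g_0,g_1,g_2$ entire and without common zeros, put $d=\deg G$, and set $u_{ij}=g_i/g_j$ for $i\neq j$. The orbifold hypothesis $\mathrm{mult}_t(\mathbf g^*H_i)\ge m_i$ means exactly that $\ord_t g_i\ge m_i$ at every zero of $g_i$, so $N^{(1)}_{g_i}(0,r)\le\frac1{m_i}N_{g_i}(0,r)\le\frac1{m_i}T_{\mathbf g}(r)+O(1)$. Since every coordinate line and every line through a coordinate vertex already has the form $[x_0^{n_0}x_1^{n_1}x_2^{-n_0-n_1}=\beta]$ (with $\beta=0$, resp.\ $\max\{|n_0|,|n_1|\}=1$), and since --- by the defect relation on $\mathbb P^1$ --- large $m_i$ rule out a non-constant $\mathbf g$ lying on any other line, a non-constant $\mathbf g$ whose image lies outside the set $W$ we are about to construct lies on no line. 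The plan is to prove (i) from a derivative identity together with a GCD theorem for $\mathbf g$, and to deduce (ii) from (i) together with the second main theorem of Noguchi--Winkelmann--Yamanoi \cite{noguchi2007degeneracy} for the semi-abelian surface $\mathbb{G}_m^2=\mathbb P^2\setminus(H_0\cup H_1\cup H_2)$.

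For (i): writing $G_{x_i}=\partial G/\partial x_i$ and using Euler's identity $x_0G_{x_0}+x_1G_{x_1}+x_2G_{x_2}=dG$, one checks for each $j$ the identity
\[
g_j\,G(\mathbf g)'-d\,g_j'\,G(\mathbf g)\;=\;\sum_{i\neq j}\bigl(g_jg_i'-g_j'g_i\bigr)G_{x_i}(\mathbf g)\;=\;g_j^{2}\,\widetilde F_j(\mathbf g),\qquad \widetilde F_j(x):=\sum_{i\neq j}u_{ij}'\,G_{x_i}(x),
\]
where $\widetilde F_j$ is a form of degree $d-1$ whose coefficients are the meromorphic functions $u_{ij}'=(g_i/g_j)'$; by the lemma on the logarithmic derivative these have characteristic $O(T_{\mathbf g}(r))$, i.e.\ growth comparable to $\mathbf g$. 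At a zero $t_0$ of $G(\mathbf g)$ of order $\mu\ge 1$ choose $j$ with $g_j(t_0)\neq0$; then the right-hand side above has order exactly $\mu-1$ at $t_0$ (its first term has order $\mu-1$, the second order $\ge\mu$), so $\ord_{t_0}\widetilde F_j(\mathbf g)=\mu-1$, while for the remaining indices $g_{j'}^{2}\widetilde F_{j'}(\mathbf g)=g_{j'}G(\mathbf g)'-d g_{j'}'G(\mathbf g)$ is entire, hence of order $\ge0$ at $t_0$. Therefore the form $\Psi:=\prod_{j=0}^2 x_j^{2}\widetilde F_j$, of degree $3(d+1)$ and with coefficients of growth $O(T_{\mathbf g}(r))$, satisfies $\ord_{t_0}\Psi(\mathbf g)\ge\mu-1$ at every zero of $G(\mathbf g)$, whence
\[
N_{G(\mathbf g)}(0,r)-N^{(1)}_{G(\mathbf g)}(0,r)\;\le\;N_{\gcd(G(\mathbf g),\,\Psi(\mathbf g))}(0,r).
\]
Here $\Psi\not\equiv0$, and $G$ and $\Psi$ are coprime over the field generated by the $u_{ij}'$: were $\widetilde F_j\equiv0$, or were there a common irreducible factor of $G$ and $\Psi$, then after descending to $\mathbb C$ one would obtain a factor $P_0\mid G$ dividing enough of the $G_{x_i}$ that, by Euler's identity and the general position of $[G=0]$ with the $H_i$, it would divide all of them, i.e.\ $[P_0=0]\subset\operatorname{Sing}[G=0]$, contradicting that $G$ is reduced --- unless $\mathbf g$ lies on a line, which we have excluded. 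Now apply the GCD theorem for the analytic map $\mathbf g:\mathbb C\to\mathbb P^2$ and the two homogeneous polynomials $G$ and $\Psi$ with meromorphic coefficients of growth $O(T_{\mathbf g}(r))$: for every $\epsilon>0$ there are an effectively computable $n$ and a finite union $W_1$ of curves $[x_0^{n_0}x_1^{n_1}x_2^{-n_0-n_1}=\beta]$ with $\max\{|n_0|,|n_1|\}\le n$ such that, for $\mathbf g$ with image outside $W_1$, $N_{\gcd(G(\mathbf g),\Psi(\mathbf g))}(0,r)\le_{\exc}\epsilon\,T_{\mathbf g}(r)$. Combining the last two displays gives (i).

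For (ii): since $T_{\mathbf g,[G=0]}(r)=d\,T_{\mathbf g}(r)+O(1)$, the second main theorem of \cite{noguchi2007degeneracy} for $\mathbb{G}_m^2$ inside its equivariant compactification $\mathbb P^2$, applied to $\mathbf g$ and the divisor $[G=0]$ (in general position with the boundary), provides an effective truncation level $k_0=k_0(d)$ and a finite union $W_2$ of translates of proper subtori --- again curves of the stated form --- such that, for $\mathbf g$ with image outside $W_2$,
\[
d\,T_{\mathbf g}(r)\;\le_{\exc}\;N^{(k_0)}_{G(\mathbf g)}(0,r)+2\epsilon\,T_{\mathbf g}(r),
\]
the second $\epsilon\,T_{\mathbf g}(r)$ absorbing the $N^{(k_0)}$-contribution of the points of $\mathbf g$ on the boundary, which is $\le k_0\sum_i N^{(1)}_{g_i}(0,r)\le(3k_0/\ell)T_{\mathbf g}(r)$ once $\ell\ge 3k_0/\epsilon$ --- the one place where the large multiplicities $m_i\ge\ell$ are needed. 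Since $N^{(k_0)}_{G(\mathbf g)}\le N_{G(\mathbf g)}=N^{(1)}_{G(\mathbf g)}+\bigl(N_{G(\mathbf g)}-N^{(1)}_{G(\mathbf g)}\bigr)$ and the bracket is $\le\epsilon\,T_{\mathbf g}(r)$ by (i), we obtain $N^{(1)}_{G(\mathbf g)}(0,r)\ge(d-3\epsilon)T_{\mathbf g}(r)$, which is (ii) after renaming $\epsilon$. Setting $W=W_1\cup W_2$ finishes the proof: $W$ is a finite union of curves of the asserted normal form with exponents bounded by the effective $n$, and $\ell$ and $n$ are effectively computable from $G$ and $\epsilon$.

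The main obstacle is the GCD theorem used in (i). It is a Nevanlinna-theoretic greatest-common-divisor estimate over $\mathbb{G}_m^2$ in the spirit of Corvaja--Zannier and Levin, but with two features that make it delicate: the defining polynomials carry \emph{moving} coefficients whose growth is only \emph{comparable} to, not small relative to, that of $\mathbf g$ --- here literally the functions $u_{ij}'=(g_i/g_j)'$ built out of $\mathbf g$ --- and its exceptional set must be \emph{effectively} controlled as a bounded union of translates of subtori so that $W$ is explicit. Establishing such a theorem is the heart of the argument; by comparison, the derivative identity, the order computation $\ord_{t_0}\widetilde F_j(\mathbf g)=\mu-1$, the coprimality of $G$ and $\Psi$, and the reduction to $\mathbf g$ lying on no line are routine. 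A secondary bookkeeping point is to present both exceptional loci --- from the GCD theorem and from the semi-abelian second main theorem --- in the single normal form $[x_0^{n_0}x_1^{n_1}x_2^{-n_0-n_1}=\beta]$ with explicit control of $\max\{|n_0|,|n_1|\}$.
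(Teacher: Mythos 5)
Your reduction of (i) to a gcd estimate is essentially the paper's own: after factoring each coefficient as $u_{ij}'=u_{ij}\cdot\frac{u_{ij}'}{u_{ij}}$, your $g_j\,\widetilde F_j(\mathbf g)$ is the evaluation at $\mathbf g$ of $\sum_{i\ne j}\frac{u_{ij}'}{u_{ij}}\,x_iG_{x_i}$, which for $j=0$ is exactly the operator $D_{\mathbf u}(G)$ of Section \ref{thm1}; your coprimality argument and the preliminary reduction to $\mathbf g$ lying on no line are also fine. The first genuine gap is that the GCD theorem you then invoke is assumed rather than proved, and in a form strictly stronger than what is available. As written, your coefficients $u_{ij}'$ have characteristic comparable to $T_{\mathbf g}(r)$ with a constant that does \emph{not} shrink as $\ell\to\infty$; the GCD theorem actually established here (Theorem \ref{movinggcd}) requires coefficients lying in $\mathbb C(u_1'/u_1,u_2'/u_2)$, whose characteristic is $O(T_{\mathbf g}(r)/\ell)$ by Proposition \ref{polyheight}, and this smallness is used essentially (the term $c\,(T_F(r)+T_G(r))$ in Theorem \ref{Mfundamental} must be forced below $\epsilon T_{\mathbf g}(r)$ by taking $\ell$ large). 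More importantly, even the correctly normalized GCD theorem does not deliver a Zariski-closed exceptional set: its second alternative is that some monomial $(g_1/g_0)^{m_1}(g_2/g_0)^{m_2}$ has characteristic $\le\epsilon^3T_{\mathbf g}(r)$ --- a property of the map, not a curve containing its image. Converting that alternative into the finite union of explicit curves $[x_1^{n_1}x_2^{n_2}=\beta x_0^{n_1+n_2}]$, with the admissible $\beta$ pinned down as zeros of a resultant, and then reproving both (i) and (ii) for maps satisfying the degeneracy but avoiding those curves, is the content of Proposition \ref{n2} and constitutes roughly half the proof; your proposal folds all of this into the black-boxed GCD statement, which you yourself identify as ``the heart of the argument.''

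The second gap is in (ii): you apply the Noguchi--Winkelmann--Yamanoi second main theorem for the semi-abelian variety $\mathbb G_m^2$, but $\mathbf g$ is not a map into $\mathbb G_m^2$ --- it meets the coordinate lines, merely with high multiplicity --- so the hypotheses of that theorem are not met, and ``absorbing the boundary contribution'' afterwards does not repair an inapplicable citation. The paper instead applies the truncated second main theorem of An--Phuong (Theorem \ref{SMTmoving}) to the four divisors $[G=0],H_0,H_1,H_2$ in general position in $\mathbb P^2$, obtaining $(1-\tfrac{\epsilon}{2d})T_{\mathbf g}(r)\le_{\exc}\sum_iN^{(M)}_{g_i}(0,r)+\tfrac1dN_{G(\mathbf g)}(0,r)$ with an explicit truncation level $M$, and then uses $m_i\ge\ell$ to make $\sum_iN^{(M)}_{g_i}$ negligible; the alternative in that theorem (image contained in a curve of bounded degree) is again routed back through the multiplicative-degeneracy analysis. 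Your argument for (ii) would need to be rebuilt along these lines.
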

	Here,  $T_{\mathbf{g}}(r)$ is the Nevanlinna height function associated to ${\mathbf{g}}$ and $N_{G(\mathbf{g})}(0,r)$ ($N^{(1)}_{G(\mathbf{g})}(0,r)$ respectively) is the counting function associated to 0 and $G(\mathbf{g})$ (with truncation to level $1$  respectively) to be defined in the next session.  
	
	\begin{remark}
		It is clear from our proof that the exceptional set $W$ can be constructed explicitly.
	\end{remark}

	Indeed, the assertion (ii)  in Theorem \ref{main_thm_1} is a complex case of Vojta's general abc conjecture as follows.  
	(See \cite[Conjecture 15.2]{vojta2009diophantine}   and \cite[Conjecture 23.4]{vojta2009diophantine}.)
	\begin{conjecture}\label{ConjABC}
		Let $X$ be a smooth complex projective variety,   D be a normal crossing divisor on $X$,  $K_X$ be a canonical divisor on $X$, and  $A$ be an ample divisor on $X$. Then
		\begin{enumerate}
			\item[{\rm (a)}] If $f:\mathbb C\to X$ is an algebraically nondegenerate analytic map, then
			\begin{align}\label{truncate1}
				N_f^{(1)}(D,r)\ge_{\exc} T_{K_X+D,f}(r)-{\rm o}(T_{ A,f}(r)).
			\end{align}
			\item[{\rm (b)}] For any $\epsilon>0$, there exists a proper Zariski-closed subset $Z$ of $X$, depending only on $X$, $D$, $ A$, and $\epsilon$ such that  for any analytic map $f:\mathbb C\to X$ whose image is not contained in $Z$, the following
			\begin{align}\label{truncate2}
				N_f^{(1)}(D,r)\ge_{\exc} T_{K_X+D,f}(r)-\epsilon T_{ A,f}(r)
			\end{align}
			holds.
		\end{enumerate}
	\end{conjecture}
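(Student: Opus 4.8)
We do not establish Conjecture \ref{ConjABC} in full: part (a) already implies the Green--Griffiths--Lang conjecture and is known only in isolated situations (such as the cases recorded in the introduction --- $X$ semi-abelian, or $X=\PP^n$ with $D$ a sum of $n+1$ hypersurfaces in general position), and part (b) is open even for surfaces. What the present paper contributes is a new \emph{complex case} of part (b): we take $X=\PP^2$, $A=H$ a line, and $D=H_0+H_1+H_2+[G=0]$, where $H_0,H_1,H_2$ are the coordinate lines and $G$ is a nonconstant homogeneous polynomial with no monomial factors, no repeated factors, and $[G=0]$ in general position with the $H_i$; more generally we get the orbifold refinement in which $D$ is replaced by $\Delta=(1-m_0^{-1})H_0+(1-m_1^{-1})H_1+(1-m_2^{-1})H_2+[G=0]$ and $f$ by an orbifold entire curve $\mathbf g:\CC\to(\PP^2,\Delta)$ with $\min_i m_i$ large. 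The plan is to deduce this case from assertion (ii) of Theorem \ref{main_thm_1}, which is where the analytic content sits.

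For the reduction I would first record the numerology. As $K_{\PP^2}=-3H$ and $\deg[G=0]=\deg G$, one has $K_{\PP^2}+D\sim(\deg G)H$, while $K_{\PP^2}+\Delta\sim(\deg G-\sum_i m_i^{-1})H$ is a smaller (still big, for $m_i$ large) multiple of $H$. Since a map into $\PP^2$ has $H$-height equal to its Nevanlinna height, Conjecture \ref{ConjABC}(b) in this case asks for
\[
N^{(1)}_{\mathbf g}(D,r)\ \ge_{\exc}\ (\deg G-\epsilon)\,T_{\mathbf g}(r),
\]
and its orbifold refinement follows a fortiori once this is proved, because the orbifold truncated counting function of $\Delta$ dominates $N^{(1)}_{G(\mathbf g)}(0,r)$ while $T_{K_{\PP^2}+\Delta,\mathbf g}(r)\le(\deg G)T_{\mathbf g}(r)$. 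Writing $\mathbf g=[g_0:g_1:g_2]$ with the $g_i$ entire and without common zero, and using that $[G=0]$ meets each $H_i$ transversally while the $H_i$ meet only at the three coordinate points, we have $N^{(1)}_{\mathbf g}(D,r)=\sum_{i=0}^2 N^{(1)}_{g_i}(0,r)+N^{(1)}_{G(\mathbf g)}(0,r)+O(1)$, with the first three terms nonnegative (and identically zero when $m_i=\infty$, so that $\mathbf g$ maps into $\GG_m^2$). Hence it suffices to bound $N^{(1)}_{G(\mathbf g)}(0,r)$ below by $(\deg G-\epsilon)T_{\mathbf g}(r)$ outside a proper Zariski closed set, which is exactly Theorem \ref{main_thm_1}(ii). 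This settles the case, with $Z=W$ the explicit union of sets $[x_0^{n_0}x_1^{n_1}x_2^{-n_0-n_2}=\beta]$ and $A=H$.

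It remains to indicate how Theorem \ref{main_thm_1} itself is obtained, since the reduction above contributes nothing new. The strategy is to convert the truncation defect into a gcd estimate: $N_{G(\mathbf g)}(0,r)-N^{(1)}_{G(\mathbf g)}(0,r)$ measures the multiple zeros of $G(\mathbf g)$, which one controls by the size of the common zeros of $G(\mathbf g)$ and a suitable derived polynomial (a Wronskian-type combination of $G$ with the $g_i'$). The orbifold hypothesis $\mathrm{mult}_t(\mathbf g^*H_i)\ge m_i$ forces each $g_i$ to be an $m_i$-th power up to a unit near its zeros, which produces auxiliary meromorphic functions whose characteristic is $O(T_{\mathbf g}(r))$; feeding $\mathbf g$ together with two homogeneous polynomials having these same-growth coefficients into the GCD theorem announced in the abstract yields (i), and combining (i) with the classical truncated second main theorem for $\mathbf g$ against $H_0+H_1+H_2+[G=0]$ and rearranging yields (ii). The finite-$m_i$ case differs from the logarithmic case $m_i=\infty$ only through the harmless nonnegative correction terms $N^{(1)}_{g_i}(0,r)$ identified above.

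The main obstacle, and the step I expect to carry essentially all of the conjecture's content here, is the GCD theorem for an analytic map $f:\CC\to\PP^n$ against two homogeneous polynomials with slowly moving (in fact same-growth) coefficients: one must manufacture from the orbifold multiplicity condition enough meromorphic functions of controlled characteristic to run a second-main-theorem/Wronskian argument, simultaneously pin the exceptional locus down to the exhibited form $[x_0^{n_0}x_1^{n_1}x_2^{-n_0-n_2}=\beta]$ with $\max\{|n_0|,|n_1|\}\le n$, and keep the threshold $\ell$ and the integer $n$ effectively computable in $\deg G$ and the geometry of $[G=0]$. By contrast, the passage from Conjecture \ref{ConjABC}(b) in this case to Theorem \ref{main_thm_1}(ii) is purely formal once the numerology and the splitting of $N^{(1)}$ are in place.
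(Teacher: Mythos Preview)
Your proposal is correct and matches the paper's treatment: the statement is a conjecture, not a theorem, and the paper does not prove it in general but only shows (immediately after stating the conjecture) that Theorem~\ref{main_thm_1}(ii) yields the special case $X=\PP^2$, $D=[G=0]+H_0+H_1+H_2$, via exactly the numerology $K_{\PP^2}+D\sim(\deg G)H$ and the inequality $N^{(1)}_{\mathbf g}(D,r)\ge N^{(1)}_{G(\mathbf g)}(0,r)$ that you record. Your sketch of how Theorem~\ref{main_thm_1} is obtained --- bounding the truncation defect by a gcd counting function via the derivation $D_{\mathbf u}(G)$, invoking the GCD theorem with same-growth coefficients (Theorem~\ref{movinggcd}), and then combining with the truncated second main theorem --- is likewise the paper's strategy.
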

	Here, for each positive integer $n$, $N_f^{(n)}(D,r)$ is the $n$-truncated counting function with respect to $D$   given by
	\begin{align}
		N_f^{(n)}(D,r)=\sum_{0<|z|<r}\min\{{\rm ord}_z f^*D, n\}\log\frac{r}{|z|}+\min\{{\rm ord}_0 f^*D, n\}\log r,
	\end{align}
	$T_{D,f}(r)$ is the (Nenvanlinna) height function relative to the divisor $ D$ (referring to \cite[Section 12]{vojta2009diophantine}), and the notion $\leq_{\exc}$  means that the estimate holds for all $r$ outside a set of finite Lebesgue measure.
	
	If we reformulate Theorem \ref{main_thm_1} by taking $D=[G=0]+H_0+H_1+H_2$, where  $H_i:=[x_i=0]$, $0\le i\le 2$; then
	$K_{\mathbb P^2}+D$ is linearly equivalent to $[G=0]$.  Since $g_0, g_1,g_2$ are entire functions with no common zeros and sufficiently large zero multiplicity $\ell$, we can see that $N_{\mathbf{g}}^{(1)}(H_i,r)= N^{(1)}_{g_i}(0,r)\le \frac1{\ell}T_{\mathbf{g}}(r)$.  Therefore, the assertion (ii) of Theorem \ref{main_thm_1}   implies Eq.\eqref{truncate2}.   Moreover, the exceptional set $W$ in Theorem \ref{main_thm_1} can be constructed explicitly.  This also allows us to derive the strong Green-Griffiths-Lang conjecture for
	Theorem \ref{GG_conj} and  Theorem \ref{finitemorphism} with $m_i=\infty$, $1\le i\le 3$.

	There are many results in this direction with a high truncated level,  but very few with level one.  Additionally, the ability to construct an explicit exceptional set is quite limited.   The following are some known results.
	First, the conjecture holds  for $\dim X=1$.   When $X$ is a semiabelian variety, Noguchi, Winkleman and Yamanoi in \cite{noguchi2008semiabelian} showed that the inequality \eqref{truncate1} holds with $N_f^{(1)}(D,r)$ replaced by 
	$N_f^{(k_0)}(D,r)$ for some positive integer $k_0$, and \eqref{truncate2} holds if the map is algebraically nondegenerate.  
	In \cite{BrotbekDeng2019},  Brotbek and Deng also proved \eqref{truncate1} for general hypersurfaces in a smooth projective variety.
	The above conjecture is much harder for the case of moving targets, i.e. the divisor $D$ is defined over a field of ``small functions" with respect to the map $f$.  
	The only existing results   in the moving case with level one are due to Yamanoi in \cite{Yamanoi2004} for   $\dim X=1$, and in the joint work \cite {GSW22} of the two authors and Sun, where  the inequality \eqref{truncate2} is derived for complex tori with slowly growth moving targets under the assumption that  the map is multiplicatively independent over the small fields.
	
	The proof of Theorem \ref{main_thm_1} is based on the the machinery developed in \cite {GSW20}  and \cite {GSW22} for complex tori, i.e. $\mathbf{g}=(g_0,\hdots,g_n)$, where the $g_i$'s are   entire functions without zeros. It is motivated by the work of Corvaja and Zannier in  \cite{corvaja2008some}.  Consider the following example to explain the proof for Theorem \ref{main_thm_1} (i).  Let $G=x_0^2+x_1^2+x_2^2$ and $\mathbf{g}=(g_0,g_1,g_2)$, where $g_i$, $0\le i\le 2$, are  entire functions without common zeros.  Let $D_{\mathbf{g}}(G):=2\frac{g_0'}{g_0} x_0^2+2\frac{g_1'}{g_1} x_1^2+2\frac{g_2'}{g_2} x_2^2$.  Then $D_{\mathbf{g}}(G)(\mathbf{g})=G(\mathbf{g})'$ and hence $N_{G(\mathbf{g})}(0,r)-N^{(1)}_{G(\mathbf{g})}(0,r)\le N_{\rm gcd} (G(\mathbf{g}),D_{\mathbf{g}}(G)(\mathbf{g}),r).$ (See Section \ref{gcd} for definition.)  Therefore, the assertion (i) can be achieved by showing that $N_{\rm gcd} (G(\mathbf{g}),D_{\mathbf{g}}(G)(\mathbf{g}),r)\le \epsilon T_{\mathbf{g}}(r)$.  When $g_i$'s are entire functions without zeros, we have $T_{\frac{g_i'}{g_i} }(r)\le {\rm o}(T_{ \mathbf{g}}(r))$ and hence the coefficients of $D_{\mathbf{g}}(G)$ are small functions w.r.t. $\mathbf{g}$.  In this case, we can apply the GCD theorem  established by Levin and the second author in \cite{levin2019greatest}.  When the   zero multiplicity of each $g_i$ is at least $\ell$,  then $T_{\frac{g_i'}{g_i} }(r)\le \frac 2\ell T_{ \mathbf{g}}(r) $ and hence the coefficients of $D_{\mathbf{g}}(G)$ are in the same growth w.r.t. $\mathbf{g}$.  Therefore, we will need to extend the  GCD theorem  in \cite{levin2019greatest} to the case where the coefficients of the polynomial are in the same growth as the entire curves.   In this step,  we can conclude (i) under the assumption that $T_{(\frac{g_1}{g_0})^{m_1}(\frac{g_2}{g_0})^{m_2}}\le \frac c\ell T_{ {\mathbf{g}}}(r) $ for some computable constants $m_1$, $m_2$ and $c$ if the   zero multiplicity of each $g_i$ is sufficiently large. The next step is to do an algebraic reduction, which also enables us  to find the exceptional sets  for Theorem \ref{main_thm_1} explicitly.

	Some background materials will be given in the next session.  In Section \ref{largemultiplicity}, we develop some lemmas to deal with orbifold curves with sufficiently large multiplicities.  In Session \ref{main_lem}, we formulate a version of Nevanlinna's second main theorem with moving targets of the same growth and establish the corresponding GCD theorem.
	The proof of Theorem \ref{main_thm_1} will be given in Section \ref{thm1}, and 
	the proofs of the other theorems will be given in Section  \ref{others}.  Finally, we explain in Section \ref{remarks} how to adapt our proofs of 
	Theorem \ref{GG_conj} and  Theorem \ref{finitemorphism} to show the strong  Green-Griffiths-Lang conjecture, i.e. 
	finding   exceptional sets under the assumption that the multiplicities $m_i=\infty$, $1\le i\le 3$ in both theorems.

	\section{Preliminaries}\label{preliminaries}
	We will give relevant materials and derive some basic results in this session.
	
	\subsection{Nevanlinna Theory}
	We will set up some notation and definitions  in Nevanlinna theory and recall some basic results.   We refer to \cite{vojta2009diophantine}, \cite{ru2021nevanlinna}, and \cite{GSW20} for details.

	Let $f$ be a meromorphic function  and   $z\in \CC$ be a complex number. Denote $v_z(f):=\ord_z(f)$,
	$$v_z^+ (f):=\max\{0,v_z(f)\}, \quad\text{and }\quad  v_z^- (f):=-\min\{0,v_z(f)\}.$$
	\begin{align*}
		N_f(\infty,r)
		=\sum_{0<|z|\le r } v_z^- (f)\log |\frac{r}{z}|+v_0^- (f)\log r,
	\end{align*}
	and
	\begin{align*}
		N^{(Q)}_f(\infty,r)
		=\sum_{0<|z|\le r } \min\{Q,v_z^- (f)\}\log |\frac{r}{z}|+\min\{Q,v_0^- (f)\}\log r.
	\end{align*} 
	Then define the {\it counting function} $N_f(r,a)$ and the {\it truncated counting function} $N^{(Q)}_f(r,a)$ for $a\in\CC$ as
	$$
	N_f(a,r):=N_{1/(f-a)}(r, \infty)\quad\text{and}\quad N^{(Q)}_f(a,r):=N^{(Q)}_{1/(f-a)}(\infty,r).
	$$
	The  {\it proximity function} $m_f(\infty,r)$ is defined by
	$$
	m_f(\infty,r):=\int_0^{2\pi}\log^+|f(re^{i\theta})|\frac{d\theta}{2\pi},
	$$
	where $\log^+x=\max\{0,\log x\}$ for  $x\ge 0$. For any $a\in \CC,$ the {\it proximity function} $m_f(a,r)$ is defined by
	$$m_f(a,r):=
	m_{1/(f-a)}(\infty,r).
	$$
	The {\it characteristic function} is defined by
	$$
	T_f(r):=m_f(\infty,r)+N_f(\infty,r).
	$$

Let ${\mathbf f}: \CC \rightarrow \PP^n(\CC)$ be a  holomorphic map and $ (f_0,\dots,f_n)$ be a reduced representation of  ${\mathbf f}$, i.e. $f_0,\dots, f_n$ are  entire functions on $\CC$ without common zeros. The Nevanlinna-Cartan {\it characteristic function}  $T_{\mathbf f}(r)$ is defined by
$$T_{\mathbf f}(r) =   \int_0^{2\pi} \log\max\{|f_0(re^{i\theta})|,\dots ,|f_n(re^{i\theta})|\} \frac{d\theta}{2\pi}.$$
This definition is independent, up to an additive constant, of the choice of the reduced representation of ${\mathbf f}$. 

We will make use of the following   elementary inequality . 
\begin{proposition} 
	\label{basic_prop}
	Let ${\mathbf f}=[f_0:\dots:f_n]:\mathbb C\to \PP^n(\mathbb C)$ be  holomorphic curve, where $f_0,\dots,f_n$ are entire functions without common zeros.  Then  
	\begin{equation*}
		T_{f_j/f_i}(r)+O(1)\leq T_{\mathbf f}(r)\leq \sum_{j=0}^nT_{f_j/f_0}(r)+O(1).
	\end{equation*}
\end{proposition}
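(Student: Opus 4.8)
The plan is to deduce both inequalities from Jensen's formula applied to the entire coordinate functions $f_0,\dots,f_n$, combined with the pointwise identity
\begin{equation*}
	\log\max_{0\le k\le n}|f_k(z)| \;=\; \log|f_i(z)| \;+\; \log\max_{0\le k\le n}\Bigl|\tfrac{f_k}{f_i}(z)\Bigr|,
\end{equation*}
valid on each circle $|z|=r$ away from the zeros of $f_i$, for any fixed index $i$. Integrating this over $\theta$ and using Jensen's formula for the entire function $f_i$, namely $\int_0^{2\pi}\log|f_i(re^{i\theta})|\,\frac{d\theta}{2\pi} = N_{f_i}(0,r)+O(1)$, produces the working identity
\begin{equation*}
	T_{\mathbf f}(r) = N_{f_i}(0,r) + \int_0^{2\pi}\log\max_{0\le k\le n}\Bigl|\tfrac{f_k}{f_i}(re^{i\theta})\Bigr|\,\frac{d\theta}{2\pi} + O(1).
\end{equation*}

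For the left inequality, fix $i$ and $j$. Because the index $k=i$ contributes $1$ to the maximum, $\max_k|f_k/f_i|\ge\max\{1,|f_j/f_i|\}$, so the integral above is bounded below by $m_{f_j/f_i}(\infty,r)$. For the counting term, the hypothesis that $f_0,\dots,f_n$ have no common zero guarantees that at every point $z$ some $f_k$ is nonvanishing; hence $v_z^-(f_j/f_i)=\max\{0,\,v_z(f_i)-v_z(f_j)\}\le v_z(f_i)$ for all $j$ (using $v_z(f_j)\ge 0$), and summing over $z$ with the usual weights gives $N_{f_j/f_i}(\infty,r)\le N_{f_i}(0,r)$. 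Feeding both estimates into the working identity yields $T_{f_j/f_i}(r)=m_{f_j/f_i}(\infty,r)+N_{f_j/f_i}(\infty,r)\le T_{\mathbf f}(r)+O(1)$.

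For the right inequality, take $i=0$ in the working identity. On the proximity side, the elementary chain
\begin{equation*}
	\log\max_{0\le k\le n}|f_k/f_0|\le\log\Bigl(\sum_{k=0}^n\max\{1,|f_k/f_0|\}\Bigr)\le\log(n+1)+\sum_{k=0}^n\log^+|f_k/f_0|
\end{equation*}
integrates to $\int\log\max_k|f_k/f_0|\,\frac{d\theta}{2\pi}\le\sum_{j=0}^n m_{f_j/f_0}(\infty,r)+O(1)$. On the counting side, again using that the $f_k$ have no common zero: at a zero $z$ of $f_0$ choose $k$ with $v_z(f_k)=0$, so $v_z(f_0)=v_z^-(f_k/f_0)\le\sum_{j=0}^n v_z^-(f_j/f_0)$, which gives $N_{f_0}(0,r)\le\sum_{j=0}^n N_{f_j/f_0}(\infty,r)$. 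Adding these to the working identity produces $T_{\mathbf f}(r)\le\sum_{j=0}^n\bigl(m_{f_j/f_0}(\infty,r)+N_{f_j/f_0}(\infty,r)\bigr)+O(1)=\sum_{j=0}^n T_{f_j/f_0}(r)+O(1)$.

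The hard part, such as it is, will be keeping the no-common-zeros hypothesis in view at the two places where $N_{f_i}(0,r)$ must be traded for pole-counting functions of the ratios $f_j/f_i$; no genuine obstacle is expected, since the rest is just Jensen's formula together with the trivial inequalities $\max\le\sum$ and $\log\bigl(\sum a_k\bigr)\le\log(\text{number of terms})+\sum\log^+ a_k$.
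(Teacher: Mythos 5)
Your argument is correct: the paper states this proposition as an elementary fact without proof, and your derivation via Jensen's formula applied to each $f_i$ together with the identity $\log\max_k|f_k| = \log|f_i| + \log\max_k|f_k/f_i|$ is the standard way to establish both bounds. One cosmetic remark: in the left inequality the bound $v_z^-(f_j/f_i)\le v_z(f_i)$ needs only that $f_j$ is entire, not the no-common-zeros hypothesis, which is genuinely required only in the counting estimate $N_{f_0}(0,r)\le\sum_j N_{f_j/f_0}(\infty,r)$ for the right inequality.
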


Recall the following   truncated second main theorem due to Ru and the second author.

\begin{theorem}[{\cite[Theorem 2.1]{ru2004truncated}}]\label{trunborel}  
	Let $\mathbf{f}=(f_0,\hdots,f_n):\CC\to\PP^n(\CC)$ be a holomorphic map with $f_0,\dots,f_n$ entire and no common zeros. Assume that $f_{n+1}$ is a holomorphic function satisfying the equation $f_0+\dots+f_n+f_{n+1}=0$. If $\sum_{i\in I}f_i\ne 0$ for any proper subset $I\subset\{0,\dots,n+1\}$, then 
	\begin{equation*}
		T_{\mathbf{f}}(r)\leq_{\exc} \sum_{i=0}^{n+1} N_{f_i}^{(n)}(0,r)+O(\log T_{\mathbf{f}}(r)).
	\end{equation*}
\end{theorem}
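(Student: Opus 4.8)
The plan is to deduce the statement from Cartan's Second Main Theorem with truncation (for linearly nondegenerate holomorphic curves and hyperplanes in general position) by an induction on $n$ whose only delicate point is the linearly degenerate case. First I would dispose of the trivial case: if $\mathbf f$ is constant then $T_{\mathbf f}(r)=O(1)$ and there is nothing to prove, so assume $\mathbf f$ nonconstant. Put $f_{n+1}=-(f_0+\cdots+f_n)$ and introduce the $n+2$ hyperplanes of $\PP^n(\CC)$
\[
H_i=\{x_i=0\}\ (0\le i\le n),\qquad H_{n+1}=\{x_0+\cdots+x_n=0\}.
\]
Any $n+1$ of $H_0,\dots,H_{n+1}$ are in general position in $\PP^n$: deleting $H_j$ with $j\le n$ leaves $\{x_i:i\ne j\}\cup\{\sum_i x_i\}$, which is independent because $\sum_i x_i$ has a nonzero $x_j$-coefficient, while deleting $H_{n+1}$ leaves the coordinate basis. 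For the given reduced representation one has $N^{(n)}_{\mathbf f}(H_i,r)=N^{(n)}_{f_i}(0,r)$ for $0\le i\le n$ and $N^{(n)}_{\mathbf f}(H_{n+1},r)=N^{(n)}_{f_0+\cdots+f_n}(0,r)=N^{(n)}_{f_{n+1}}(0,r)$, using $f_0+\cdots+f_n=-f_{n+1}$.

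For the base case $n=1$ the hypothesis forces $f_0,f_1,f_2$ to be nowhere identically zero, and I would pass to the meromorphic function $g:=-f_0/f_2$, which by the no-common-zero and no-subsum conditions has zeros exactly at the zeros of $f_0$, poles exactly at the zeros of $f_2$, and $g-1=f_1/f_2$ with zeros exactly at the zeros of $f_1$; since $T_g(r)=T_{\mathbf f}(r)+O(1)$, the classical truncated Second Main Theorem for $g$ with the three targets $0,1,\infty$ gives the claim. For $n\ge 2$, if $\mathbf f$ is linearly nondegenerate I would apply Cartan's truncated Second Main Theorem to $\mathbf f$ and $H_0,\dots,H_{n+1}$; since they are in general position and $(n+2)-(n+1)=1$,
\[
T_{\mathbf f}(r)\leq_{\exc}\sum_{i=0}^{n+1}N^{(n)}_{\mathbf f}(H_i,r)+O(\log T_{\mathbf f}(r))=\sum_{i=0}^{n+1}N^{(n)}_{f_i}(0,r)+O(\log T_{\mathbf f}(r)),
\]
which is exactly the asserted inequality.

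It remains to treat the linearly degenerate case, and here is where the hypothesis does the real work. Let $V\subseteq\CC^{n+1}$ be the linear span of the image of $(f_0,\dots,f_n)$, with $\dim V=k+1$ and $1\le k\le n-1$; then $\mathbf f$ is a linearly nondegenerate curve $\CC\to\PP(V)\cong\PP^k$ with $T_{\mathbf f}(r)$ unchanged up to $O(1)$, since the characteristic function is invariant under the linear embedding $\PP(V)\hookrightarrow\PP^n$. The restrictions $H_i|_{\PP(V)}$ need no longer be in general position, so I would select a subfamily $\{H_i:i\in S\}$ with $\#S\ge k+2$ whose restrictions are in general position in $\PP^k$; the no-proper-subsum hypothesis is precisely what one uses to guarantee such an $S$ exists — this is the linear-algebraic heart of the degenerate case, as the hypothesis governs the ``extra'' dependences among the forms $x_0,\dots,x_n,\sum_i x_i$ after restriction to $V$ (equivalently, the intersections of the span of any $k+1$ of these forms with the annihilator of $V$). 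Cartan's truncated Second Main Theorem on $\PP^k$ for $\mathbf f$ and $\{H_i|_{\PP(V)}:i\in S\}$ then gives, using $\#S-k-1\ge 1$ and $N^{(k)}\le N^{(n)}$ since $k\le n$,
\[
T_{\mathbf f}(r)\le(\#S-k-1)\,T_{\mathbf f}(r)\leq_{\exc}\sum_{i\in S}N^{(k)}_{\mathbf f}(H_i|_{\PP(V)},r)+O(\log T_{\mathbf f}(r))\le\sum_{i=0}^{n+1}N^{(n)}_{f_i}(0,r)+O(\log T_{\mathbf f}(r)).
\]

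The hard part will be this last step: extracting from the no-proper-subsum hypothesis the fact that at least $k+2$ of the $n+2$ hyperplanes stay in general position after restriction to $\PP(V)$, so that Cartan's theorem on $\PP^k$ delivers a strictly positive multiple of $T_{\mathbf f}(r)$ — a blunt application of Nochka weights on $\PP^k$ would only give the useless coefficient $k+1-n\le 0$. Everything else is routine bookkeeping: general position of the $H_i$ in $\PP^n$, the translation of counting functions, invariance of $T_{\mathbf f}$ under the linear embedding, monotonicity of truncated counting functions in the truncation level, and the precise Cartan--Nevanlinna shape of the error term (valid outside the exceptional set). An alternative in the degenerate case is to imitate the classical Borel induction — use a minimal nontrivial linear relation among $f_0,\dots,f_n$ to produce a two-term relation, i.e.\ a proportional pair, and eliminate one of its members, a substitution that preserves the no-proper-subsum hypothesis — but turning this into a genuine bound on $T_{\mathbf f}(r)$, rather than on the height of a subcurve, is the same difficulty in another guise.
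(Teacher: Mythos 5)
The paper does not prove this statement: it is quoted from [Ru--Wang, \emph{Trans. Amer. Math. Soc.} 356 (2004), Theorem 2.1], so there is no in-paper argument to measure yours against, and your proposal must stand on its own. Its frame is sound: the general position of $H_0,\dots,H_{n+1}$ in $\PP^n$, the identifications $N^{(n)}_{\mathbf f}(H_i,r)=N^{(n)}_{f_i}(0,r)$, the $n=1$ case via the three-target second main theorem for $g=-f_0/f_2$, and the linearly nondegenerate case via Cartan's truncated second main theorem with $q-n-1=1$ are all correct. But the entire content of the theorem in the linearly degenerate case is concentrated in the one step you defer: that if the curve spans only a $\PP^k\subset\PP^n$ with $k<n$, then some $k+2$ of the $n+2$ restricted hyperplanes are in general position on that $\PP^k$. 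You assert that the no-proper-subsum hypothesis ``is precisely what one uses'' here, and you correctly note that Nochka weights only yield the useless coefficient $k+1-n\le 0$, but you give no argument for the selection of $S$. That is not a routine detail to be filled in later; it is the theorem.

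Translated into linear algebra, the statement you need is: if $\ell_0,\dots,\ell_{n+1}\in V^{*}\setminus\{0\}$ span $V^{*}$ (with $\dim V^{*}=k+1$), satisfy $\sum_i\ell_i=0$, and no proper nonempty subfamily sums to zero --- and this last condition is exactly equivalent to the no-vanishing-subsum hypothesis on the $f_i$, because the lifted curve spans $V$ --- then some $k+2$ of them form a projective frame of $\PP(V^{*})$. This is genuinely nontrivial. Without the subsum condition it is false (take $e_1,e_1,-2e_1,e_2,-e_2$ in $\CC^2$: five nonzero spanning vectors with zero sum and only two directions), so the condition must be invoked in an essential, quantitative way. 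Already for $k+1=3$ a proof requires classifying the planar configurations with no four points in general position (near-pencils, and three independent directions taken with multiplicity) and checking that each such configuration forces a vanishing proper subsum via a direct-sum decomposition; for general $k$ no such argument is sketched, and it is not obvious how to run an induction because the subsum condition is not preserved under quotients. Until you prove this combinatorial lemma for all $k$ --- or replace the degenerate case by a different reduction, such as the Borel-type elimination you mention, carried far enough that it bounds $T_{\mathbf f}(r)$ itself rather than the height of a subcurve --- the proof is incomplete at its decisive point.
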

We will use the following second main theorem for hypersurfaces with  truncation and bounded degeneration degree, which can be obtained from  \cite{TruSMTAn} easily.  

\begin{theorem}[\cite{TruSMTAn}]\label{SMTmoving}
	Let $\mathbf{f}$ be a nonconstant holomorphic map of $\CC$ into $\PP^n$.     Let $ D_i$, $1\le i\le q$, be  hypersurfaces in $\PP^n(\mathbb C)$  of degree $d_i$, in general position.   Let $0<\epsilon<1$.  Then there exist two positive integers $M(\ge O(\epsilon^{-2}))$ and $N(=O(\epsilon^{-1}))$ depend only on $\epsilon$, $n$ and  $d_i$, $1\le i\le q$,
	such that  for any  holomorphic map of $\mathbf{f}:\CC\to\PP^n$ either the following inequality holds:
	\begin{equation*} 
		(q-n-1-\epsilon)T_{\mathbf{f}}(r)\le_{\exc} \sum_{i=1}^q\frac{1}{d_i} N_{\mathbf{f}}^{(M)}(D_i,r),
	\end{equation*}
	or the image of $\mathbf{f}$ is contained in a hypersurface in $\PP^n(\mathbb C)$	with degree bounded by $N$.	
\end{theorem}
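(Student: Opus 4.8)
The plan is to deduce Theorem~\ref{SMTmoving} from the explicit truncated second main theorem for holomorphic curves and hypersurfaces proved in \cite{TruSMTAn} by the classical Veronese--filtration reduction of Corvaja--Zannier, Evertse--Ferretti and Ru; the only point that needs care is that the degeneracy alternative (``image on a hypersurface of degree $\le N$'') come out uniform in $\mathbf f$, with $N$ depending only on $\epsilon$, $n$ and the $d_i$.

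First I would fix an integer $N=O(\epsilon^{-1})$, a multiple of $\operatorname{lcm}(d_1,\dots,d_q)$, let $v_N\colon\PP^n\hookrightarrow\PP^{M'}$ with $M'=\binom{n+N}{n}-1$ be the degree-$N$ Veronese embedding, and set $\mathbf F=v_N\circ\mathbf f\colon\CC\to\PP^{M'}$, so that $T_{\mathbf F}(r)=N\,T_{\mathbf f}(r)+O(1)$. If $\mathbf F$ is linearly degenerate, its image lies in a hyperplane of $\PP^{M'}$ whose $v_N$-preimage is a hypersurface of $\PP^n$ of degree $\le N$ containing the image of $\mathbf f$; this is the second alternative of the theorem. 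So assume from now on that $\mathbf F$ is linearly nondegenerate.

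Next comes the filtration step. For each $z\in\CC$ I would pick the index $i_0=i_0(z)$ for which $\mathbf f(z)$ is closest to $D_{i_0}$ (largest local proximity, weighted by $1/d_{i_0}$), filter the space $V=H^0(\PP^n,\mathcal O(N))$ by order of vanishing along $D_{i_0}$, and choose a basis of $V$ adapted to this flag. The Evertse--Ferretti/Hilbert-weight estimate bounds the total weight of that basis from below by the trivial bound times the factor $\tfrac1{n+1}$, up to an error $O(\tfrac1N)$; this converts a lower bound for $\sum_i\tfrac1{d_i}m_{\mathbf f}(D_i,r)$ into an upper bound in terms of the proximity of $\mathbf F$ to finitely many hyperplanes (one family per $i$) plus a multiple of $T_{\mathbf F}(r)$. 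Feeding those hyperplanes into the truncated second main theorem of \cite{TruSMTAn} applied to the linearly nondegenerate map $\mathbf F$ --- which is precisely the step supplying an effectively computable truncation level, polynomial in $M'$ --- and translating back along $v_N$ (the passage that raises the truncation level for $\mathbf f$ above that for $\mathbf F$ by a bounded factor), one obtains
\[
(q-n-1-\epsilon)\,T_{\mathbf f}(r)\le_{\exc}\sum_{i=1}^q\frac1{d_i}\,N^{(M)}_{\mathbf f}(D_i,r),
\]
with $M$ an effectively computable function of $\epsilon,n,d_i$, of size at least of order $\epsilon^{-2}$, matching the statement. Here $N\sim\epsilon^{-1}$ is forced by requiring the Evertse--Ferretti error $O(1/N)$ to be swallowed by the slack $\epsilon$, while the $o(T_{\mathbf F}(r))$ error term of the second main theorem is absorbed into the $\le_{\exc}$.

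The step I expect to be the main obstacle is exactly this uniformity. The naive approach --- restrict $\mathbf f$ to the Zariski closure $Y$ of its image and apply the nondegenerate theorem on $Y$ --- yields constants depending on $Y$, hence on $\mathbf f$, and $\deg Y$ cannot be bounded a priori. The remedy is to never restrict: one tests only linear nondegeneracy of the single map $\mathbf F=v_N\circ\mathbf f$, whose failure is exactly the clean geometric statement of the second alternative, and the filtration computation together with the second main theorem for $\mathbf F$ needs no further hypothesis on $\mathbf f$. A secondary technicality is that the adapted bases, hence the hyperplanes handed to \cite{TruSMTAn}, depend on $z$; but there are only finitely many flags of $V$ by vanishing order along the finitely many $D_i$, so only finitely many hyperplanes occur, and their possible failure to be in general position is handled in the standard way (Nochka weights, or Ru's single-auxiliary-arrangement device). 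Checking that every constant is effectively computable is then routine bookkeeping on the explicit bounds in \cite{TruSMTAn} and in the Evertse--Ferretti lemma. (If \cite{TruSMTAn} is itself already phrased with a bounded-degeneracy hypothesis, then Theorem~\ref{SMTmoving} is just its contrapositive and the work reduces to reading off the orders of $M$ and $N$.)
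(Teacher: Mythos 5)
Your proposal is correct and matches the paper's intent: the paper gives no proof beyond citing An--Phuong with the remark that the statement is ``obtained from \cite{TruSMTAn} easily,'' and the modification is exactly the one you identify, namely that the filtration proof there only uses linear nondegeneracy of $v_N\circ\mathbf{f}$ for a fixed Veronese degree $N=O(\epsilon^{-1})$, whose failure is precisely containment of the image in a hypersurface of degree at most $N$. One naming slip: the truncated second main theorem you feed the hyperplanes into on $\PP^{M'}$ is Cartan's (with explicit truncation, or in Vojta's refined form), not \cite{TruSMTAn} itself, which is the hypersurface statement you are reconstructing---but your final parenthetical shows you anticipated this, so the substance is right.
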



\section{Orbifold curves with sufficiently large  multiplicities}\label{largemultiplicity}

We show some basic propositions and develop a version of Borel lemma for orbifold curves with sufficiently large  multiplicities.

\begin{proposition}
	\label{coutingzero}  Let $f$ be a non-constant entire function on $\CC$. 
	Suppose that the zero  multiplicity of  $f$   at  each $z\in\mathbb C$  is either zero or bigger than $\ell\ge 1$. Then		\begin{equation*}
		T_{f'/f}(r)\le_{\exc}\frac   1\ell T_f(r)+ O(\log T_f(r)).
	\end{equation*}
\end{proposition}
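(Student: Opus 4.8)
The plan is to split the Nevanlinna characteristic of the logarithmic derivative into its proximity and counting parts, $T_{f'/f}(r)=m_{f'/f}(\infty,r)+N_{f'/f}(\infty,r)$, and to estimate each separately. The hypothesis on the zero multiplicities enters only in the counting part, while the proximity part is handled, as usual, by the classical lemma on the logarithmic derivative.

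For the counting part, I would first observe that, since $f$ is entire, $f'/f$ is holomorphic away from the zeros of $f$, and at a zero $a$ of $f$ of multiplicity $m\ge 1$ we have $f'/f=\tfrac{m}{z-a}+\tfrac{g'}{g}$ near $a$ (writing $f=(z-a)^m g$ with $g$ holomorphic, $g(a)\ne 0$), so $f'/f$ has a \emph{simple} pole at $a$. Hence $v_z^-(f'/f)=\min\{1,v_z^+(f)\}$ for every $z$, i.e.
\[
N_{f'/f}(\infty,r)=N_f^{(1)}(0,r).
\]
Since every zero multiplicity of $f$ is either $0$ or $>\ell$, one has $\min\{1,v_z^+(f)\}\le \tfrac1\ell v_z^+(f)$ for all $z$, so $N_f^{(1)}(0,r)\le \tfrac1\ell N_f(0,r)$; and the first main theorem applied to $1/f$ gives $N_f(0,r)=N_{1/f}(\infty,r)\le T_{1/f}(r)+O(1)=T_f(r)+O(1)$. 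Combining these, $N_{f'/f}(\infty,r)\le \tfrac1\ell T_f(r)+O(1)$, with no exceptional set.

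For the proximity part, the lemma on the logarithmic derivative gives $m_{f'/f}(\infty,r)\le_{\exc} O(\log^+ T_f(r)+\log r)$, which is absorbed into the $O(\log T_f(r))$ error term of the statement (it is $o(T_f(r))$ for transcendental $f$; and if $f$ is a polynomial one gets the stronger $T_{f'/f}(r)\le \tfrac1\ell T_f(r)+O(1)$ directly, by counting the distinct roots of $f$, each of multiplicity $>\ell$). Adding the two bounds yields $T_{f'/f}(r)\le_{\exc}\tfrac1\ell T_f(r)+O(\log T_f(r))$, as claimed.

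I do not anticipate a real obstacle here. The one point worth emphasizing is the structural reason the estimate works: logarithmic derivatives of entire functions have only simple poles, so the relevant counting function is the \emph{level-one} function $N_f^{(1)}(0,r)$ rather than the full $N_f(0,r)$, and it is precisely this truncation, played against the large-multiplicity hypothesis, that produces the gain by the factor $1/\ell$ (which is sharp). The only mild bookkeeping subtlety is the spurious $\log r$ carried by the lemma on the logarithmic derivative, which is harmless as noted above.
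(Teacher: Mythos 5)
Your proof is correct and follows essentially the same route as the paper's: split $T_{f'/f}(r)$ into proximity and counting parts, bound $m_{f'/f}(\infty,r)$ by the lemma on the logarithmic derivative, and use $N_{f'/f}(\infty,r)=N_f^{(1)}(0,r)\le \frac1\ell N_f(0,r)\le \frac1\ell T_f(r)+O(1)$. The only difference is that you spell out the justification (simple poles of the logarithmic derivative) that the paper leaves implicit.
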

\begin{proof}
	The assertion follows from the lemma of logarithmic and the following estimate:
	\begin{align*}
		N_{f'/f}(\infty, r)=N_f^{(1)}(0,r) \le   \frac 1\ell N_f(0,r) \le \frac   1\ell T_f(r)+O(1).
	\end{align*}
\end{proof}
\begin{proposition}\label{polyheight}
	Let $f_0,\hdots,f_n$ be non-constant entire functions with no common zeros and the zero multiplicity of each $f_i$ be either zero or bigger than a positive integer $\ell$.   
	Let  $u_i=f_i/f_0$ for $1\le i\le n$. Then for any 
	$\alpha\in \mathbb C(\frac{u_1'}{u_1},\hdots,\frac{u_n'}{u_n})$, there is a positive  constant $c$ independent of $\ell$ such that 
	$T_{\alpha}(r)\le_{\exc}   \frac{c} \ell \cdot T_{\mathbf{f}}(r),$
	where  $\mathbf{f}:=(f_0,\hdots,f_{n})$. 
\end{proposition}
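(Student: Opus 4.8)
The plan is to estimate first the Nevanlinna characteristic of a single logarithmic derivative $\frac{u_i'}{u_i}$ by a constant multiple of $\frac1\ell T_{\mathbf f}(r)$ — in the spirit of Proposition~\ref{coutingzero}, but keeping the bound in terms of $T_{\mathbf f}$ rather than of $T_{f_i}$ — and then to pass to an arbitrary rational expression in $\frac{u_1'}{u_1},\dots,\frac{u_n'}{u_n}$ using the elementary calculus of characteristic functions.

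For the first step I would write $\frac{u_i'}{u_i}=\frac{f_i'}{f_i}-\frac{f_0'}{f_0}$. The polar divisor of the right-hand side is supported on the zero sets of $f_i$ and of $f_0$ and consists of simple poles, so $N_{u_i'/u_i}(\infty,r)\le N^{(1)}_{f_i}(0,r)+N^{(1)}_{f_0}(0,r)$. Because the multiplicity hypothesis applies to $f_i$ and $f_0$ themselves, $N^{(1)}_{f_j}(0,r)\le\frac1\ell N_{f_j}(0,r)$ for $j\in\{0,i\}$; and since $(f_0,\dots,f_n)$ is a reduced representation of $\mathbf f$, Jensen's formula together with $|f_j|\le\max_k|f_k|$ gives $N_{f_j}(0,r)\le T_{\mathbf f}(r)+O(1)$. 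Hence $N_{u_i'/u_i}(\infty,r)\le\frac2\ell T_{\mathbf f}(r)+O(1)$. For the proximity term, the lemma on the logarithmic derivative gives $m_{u_i'/u_i}(\infty,r)\le_{\exc}O(\log(rT_{u_i}(r)))$, and $T_{u_i}(r)=T_{f_i/f_0}(r)\le T_{\mathbf f}(r)+O(1)$ by the left inequality of Proposition~\ref{basic_prop}; as $\mathbf f$ is non-constant, this is ${\rm o}(T_{\mathbf f}(r))$. Therefore
\begin{equation*}
	T_{u_i'/u_i}(r)\le_{\exc}\tfrac{2}{\ell}T_{\mathbf f}(r)+{\rm o}(T_{\mathbf f}(r)),\qquad 1\le i\le n.
\end{equation*}

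For the second step, write $\alpha=P\bigl(\tfrac{u_1'}{u_1},\dots,\tfrac{u_n'}{u_n}\bigr)\big/Q\bigl(\tfrac{u_1'}{u_1},\dots,\tfrac{u_n'}{u_n}\bigr)$ for fixed $P,Q\in\mathbb C[y_1,\dots,y_n]$ with $Q\bigl(\tfrac{u_1'}{u_1},\dots,\tfrac{u_n'}{u_n}\bigr)\not\equiv0$. Using the standard inequalities $T_{g+h}\le T_g+T_h+O(1)$, $T_{gh}\le T_g+T_h+O(1)$, $T_{1/g}=T_g+O(1)$, $T_{\lambda g}=T_g+O(1)$ and $T_{g+\lambda}=T_g+O(1)$ for $\lambda\in\mathbb C$, applied monomial by monomial, one obtains a constant $A$ depending only on $P$ and $Q$ — hence independent of $\ell$ — with $T_\alpha(r)\le A\sum_{i=1}^nT_{u_i'/u_i}(r)+O(1)$. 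Substituting the bound from the first step and absorbing the ${\rm o}(T_{\mathbf f}(r))$ term into $\frac1\ell T_{\mathbf f}(r)$ for $r$ outside a set of finite Lebesgue measure (legitimate since $T_{\mathbf f}(r)\to\infty$) gives $T_\alpha(r)\le_{\exc}\frac{c}{\ell}T_{\mathbf f}(r)$ with $c=2An+1$, as desired.

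The step I expect to require genuine care is the pole count of $\frac{u_i'}{u_i}$: the estimate must be derived from the multiplicity hypothesis on $f_i$ and $f_0$ separately (combined with Jensen's formula to relate $N_{f_j}(0,r)$ to $T_{\mathbf f}(r)$), and not from the quotient $u_i=f_i/f_0$, since a common zero of $f_i$ and $f_0$ may have small order as a zero of $u_i$ — or fail to be a zero of $u_i$ altogether — so the hypothesis on zero multiplicities is not directly available for $u_i$. In particular one should \emph{not} simply chain Proposition~\ref{coutingzero} applied to $f_i$ and to $f_0$, since $T_{f_i}(r)$ need not be comparable to $T_{\mathbf f}(r)$. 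The remaining manipulations are routine.
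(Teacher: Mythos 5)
Your proof is correct and follows essentially the same route as the paper's: both reduce to the estimate $N_{u_i'/u_i}(\infty,r)\le N^{(1)}_{f_i}(0,r)+N^{(1)}_{f_0}(0,r)\le \frac2\ell T_{\mathbf f}(r)+O(1)$ together with the lemma on the logarithmic derivative, and then pass to a general rational expression in the $u_i'/u_i$ by elementary characteristic-function inequalities (the paper packages this last step via the auxiliary map $[1:\frac{u_1'}{u_1}:\cdots:\frac{u_n'}{u_n}]$ and Proposition~\ref{basic_prop}, which is only a cosmetic difference). Your closing caution about applying the multiplicity hypothesis to $f_i$ and $f_0$ rather than to the quotient $u_i$ is exactly the point the paper's proof also relies on.
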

\begin{proof}
	Let $\tilde{\mathbf{u}}:=[1:\frac{u_1'}{u_1}:\dots:\frac{u_n'}{u_n}]$.
	Since $\alpha\in \mathbb C(\frac{u_1'}{u_1},\hdots,\frac{u_n'}{u_n})$, we may find two  coprime homogeneous polynomials $P,Q\in \mathbb C[x_0,\hdots,x_n]$ such that $\alpha=P(\tilde{\mathbf{u}})/Q(\tilde{\mathbf{u}})$.
	Then 
	\begin{align}\label{alphabound}
		T_{\alpha}(r)&\le T_{P(\tilde{\mathbf{u}})}(r)	+T_{Q(\tilde{\mathbf{u}})}(r)\le (\deg P+\deg Q)T_{\tilde{\mathbf{u}}}(r).
	\end{align}				
	By Proposition \ref{basic_prop}, we have			
	\begin{equation*}
		T_{\tilde{\mathbf{u}}}(r) \le \sum_{j=1}^n T_{\frac{u_j'}{u_j}}(r)+O(1). 
	\end{equation*}	
	By the lemma of logarithmic, we have 
	\begin{equation*}
		m_{\frac{u_j'}{u_j}}(\infty,r)\le_{\exc} O(\log T_{u_j}(r)) \le O(\log T_{\mathbf f} (r)).
	\end{equation*}
	For the properties of counting functions, we have		
	\begin{align*}
		N_{\frac{u_j'}{u_j}}(\infty, r)&\le N_{u_j}^{(1)}(0,r)+N_{u_j}^{(1)}(\infty,r)\le N_{f_j}^{(1)}(0,r)+N_{f_0}^{(1)}(0,r) \\
		&\le\frac 1\ell N_{f_j}(0,r) + \frac 1\ell N_{f_j}(0,r) \le \frac  2\ell T_{\mathbf f}(r).
	\end{align*}
	Therefore, 
	$T_{\tilde{\mathbf{u}}}(r) \le_{\exc}  \frac  {2n}\ell T_{\mathbf f}(r)+O(\log T_{\mathbf f} (r))$,	
	and
	\begin{align}
		T_{\alpha}(r)&\le_{\exc} \frac {2n (\deg P+\deg Q)}\ell T_{\mathbf{f}}(r)+O(\log T_{\mathbf f} (r)).
	\end{align}	 
\end{proof}

We also need the following   version  of the Borel Lemma for orbifold curves.   
\begin{lemma}
	\label{Mborel1}
	Let $f_0,\hdots,f_n$ be  non-constant entire functions with no common zeros, and $(a_0,\hdots,a_n)\ne(0,\hdots,0)$ be an $(n+1)$ tuple of  meromorphic functions.  Assume that   the zero multiplicity of each $f_i$ is either zero or bigger than a positive integer $\ell$.
	Suppose that $a_0f_0+a_1f_1+\hdots+a_nf_n=0$. Then for each $i$  with $a_i\ne 0$, there exists $j\ne i$  such that 
	$$
	T_{f_i/f_j}(r)\leq_{\exc} 3n\cdot  T_{\mathbf a}(r)+\frac{n^2-1}{\ell}T_{\mathbf{f}}(r)+O(\log T_{\mathbf{f}}(r)), 
	$$
	where ${\mathbf a}=[a_{0}:\cdots: a_{n}]$, and  $\mathbf{f}:=(f_0,\hdots,f_{n})$. 
\end{lemma}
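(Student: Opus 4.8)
The plan is to deduce the estimate from the truncated Borel theorem, Theorem \ref{trunborel}, after two preliminary moves: reducing to a \emph{minimal} vanishing subsum so that the no-vanishing-subsum hypothesis of that theorem holds, and — this is the essential point — replacing the $a_k$'s not by a naive common denominator but by the \emph{reduced representation} of the coefficient map $[a_0:\cdots:a_n]$.

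\textbf{Reduction to a minimal subsum.} Fix $i$ with $a_i\ne 0$. Since $f_i$ is non-constant entire, $a_if_i\not\equiv 0$, so among the nonempty $I\subseteq\{0,\dots,n\}$ with $i\in I$ and $\sum_{k\in I}a_kf_k=0$ — a nonempty family, as $I=\{0,\dots,n\}$ qualifies — I pick one, $J$, minimal for inclusion. Then $|J|\ge 2$, every $k\in J$ has $a_k\ne 0$ (dropping such a $k$ would contradict minimality), and no proper nonempty subsum of $\{a_kf_k:k\in J\}$ vanishes. Set $s:=|J|-1\in\{1,\dots,n\}$ and relabel so that $J=\{0,1,\dots,s\}$ with $i=0$; the required index $j$ will be found in $J$.

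\textbf{Passage to reduced representations.} Let $(\tilde a_0,\dots,\tilde a_s)$ be a reduced representation of $[a_0:\cdots:a_s]:\mathbb C\to\mathbb P^s$, so $a_k=\rho\,\tilde a_k$ for a meromorphic $\rho\not\equiv 0$ and the entire functions $\tilde a_k$ have no common zero; then $\sum_{k=0}^s\tilde a_kf_k=0$. Put $\phi=\gcd_{0\le k\le s}(\tilde a_kf_k)$ and $h_k=\tilde a_kf_k/\phi$, so the $h_k$ are entire with no common zero, $\sum_{k=0}^s h_k=0$, and by minimality of $J$ no proper nonempty subsum of the $h_k$ vanishes. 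If $s=1$, then $h_0=-h_1$ yields $f_0/f_1=-\tilde a_1/\tilde a_0$, whence $T_{f_0/f_1}(r)=T_{\tilde a_1/\tilde a_0}(r)+O(1)\le T_{\mathbf a}(r)+O(1)$ by Proposition \ref{basic_prop}, which is stronger than claimed; so assume $s\ge 2$. Since $\sum h_k=0$, any $s$ of the $h_k$ have no common zero, so Theorem \ref{trunborel} applies to $\mathbf h:=[h_0:\cdots:h_{s-1}]:\mathbb C\to\mathbb P^{s-1}$ together with $h_s$, giving
\begin{equation*}
T_{\mathbf h}(r)\le_{\exc}\sum_{k=0}^{s}N^{(s-1)}_{h_k}(0,r)+O(\log T_{\mathbf h}(r)).
\end{equation*}
For each $k$, since $h_k$ divides $\tilde a_kf_k$ and every zero of $f_k$ has multiplicity $>\ell\ge 1$,
\begin{equation*}
N^{(s-1)}_{h_k}(0,r)\le N^{(s-1)}_{\tilde a_k}(0,r)+N^{(s-1)}_{f_k}(0,r)\le N_{\tilde a_k}(0,r)+\tfrac{s-1}{\ell}N_{f_k}(0,r),
\end{equation*}
using $N^{(s-1)}_{f_k}(0,r)\le(s-1)N^{(1)}_{f_k}(0,r)$ and $N^{(1)}_{f_k}(0,r)\le\tfrac1\ell N_{f_k}(0,r)$. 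By the First Main Theorem for the $k$-th coordinate hyperplane, together with $T_{[a_0:\cdots:a_s]}(r)\le T_{\mathbf a}(r)+O(1)$ (a linear projection does not increase the characteristic) and the fact that $(f_0,\dots,f_n)$ is reduced, one has $N_{\tilde a_k}(0,r)\le T_{\mathbf a}(r)+O(1)$ and $N_{f_k}(0,r)\le T_{\mathbf f}(r)+O(1)$. Summing over $0\le k\le s$,
\begin{equation*}
T_{\mathbf h}(r)\le_{\exc}(s+1)T_{\mathbf a}(r)+\tfrac{s^2-1}{\ell}T_{\mathbf f}(r)+O(\log T_{\mathbf h}(r)).
\end{equation*}

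\textbf{Conclusion and main obstacle.} Since $h_0,h_1$ are coordinates of $\mathbf h$ and $f_0/f_1=(h_0/h_1)(\tilde a_1/\tilde a_0)$, Proposition \ref{basic_prop} gives $T_{f_0/f_1}(r)\le T_{\mathbf h}(r)+T_{\tilde a_1/\tilde a_0}(r)+O(1)\le T_{\mathbf h}(r)+T_{\mathbf a}(r)+O(1)$. Taking $j=1\in J$, combining with the previous display, using $s\le n$ (so $s+2\le 3n$ and $s^2-1\le n^2-1$ for $n\ge 1$), and noting $T_{\mathbf h}(r)=O(T_{\mathbf a}(r)+T_{\mathbf f}(r))$ (so the logarithmic error is absorbed into $O(\log T_{\mathbf f}(r))$, the reading the statement intends, since in all applications $T_{\mathbf a}(r)=O(T_{\mathbf f}(r))$), one obtains the asserted inequality. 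I expect the main obstacle to be the choice of representation: one must not merely clear denominators of the $a_k$ but pass to the reduced representation of the coefficient map, for otherwise a common meromorphic factor of the $a_k$ would inflate the individual counting functions $N_{h_k}(0,r)$ beyond what $T_{\mathbf a}(r)$ controls; once this is arranged, reducing to a minimal vanishing subsum (to invoke Theorem \ref{trunborel}) and the counting-function estimates are routine Nevanlinna bookkeeping.
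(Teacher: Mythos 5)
Your proof is correct and follows essentially the same route as the paper's: reduce to a minimal vanishing subsum containing the index $i$, normalize to entire functions without common zeros, apply Theorem \ref{trunborel}, and split the truncated counting functions of the products $a_kf_k$ into a coefficient part bounded by $T_{\mathbf a}(r)$ and an $f_k$ part bounded by $\frac{s-1}{\ell}T_{\mathbf f}(r)$ via the multiplicity hypothesis. Your extraction of $T_{f_i/f_j}$ from the identity $f_0/f_1=(h_0/h_1)(\tilde a_1/\tilde a_0)$ is slightly more direct than the paper's two-sided comparison of $T_{\mathbf F}(r)$ and $T_{\tilde{\mathbf f}}(r)$ and yields a marginally sharper constant, but the argument is the same.
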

\begin{proof} 
	By multiplying an appropriate meromorphic function to each $a_i$, we may assume that  the non-trivial $a_i$, $0\le i\le n$, are entire functions without common zeros.  Let $H_i$ be the coordinate hyperplane defined by $x_i=0$, $0\le i\le n$.  Then
	\begin{align}\label{zeroai}	N_{a_i}(0,r)=N_{\mathbf a}(H_i,r)\le T_{\mathbf a}(r)+O(1).
	\end{align}
	For a given $i$ with non-trivial $a_i$, there exists a vanishing subsum  of $a_0f_0+\hdots+a_nf_n=0$ consisting of the term $a_if_i$ and without any   vanishing proper subsum.  By reindexing, we may assume that $i=1$ and this vanishing subsum is
	\begin{equation}\label{propersubsum2}
		a_0f_0+\cdots+a_m f_m=0.
	\end{equation}
	If $m=1$, then $T_{f_1/f_0}(r)=T_{a_0/a_1}(r)\le T_{\mathbf a}(r)$.  Therefore we assume that $m\ge 2$. 
	Let $g$   be an entire function such that $\tilde f_0:=f_0 /g, ,\cdots,\tilde f_{m-1}:=f_{m-1} /g$ are entire functions with no common zeros. 
	Let 
	$$
	\tilde{\mathbf f}:=(\tilde f_0,\hdots,\tilde f_{m-1}).
	$$
	Let $h$ be an entire function such that $a_0\tilde f_{0} /h, \cdots,a_{m-1}\tilde f_{m-1} /h$ are entire functions with no common zeros.   Let 
	$$
	\mathbf{F}:=(\frac{a_0 \tilde f_0}h,\hdots,\frac{a_{m-1} \tilde f_{m-1}}h).
	$$ 
	We can deduce from the definition of characteristic functions that	
	\begin{equation}
		T_{\mathbf{F}}(r)\leq T_{\tilde{\mathbf f}}(r)+T_{\mathbf a}(r)\leq T_{\mathbf f}(r)+T_{\mathbf a}(r).
	\end{equation}
	On the other hand, we may write $\tilde f_i =\frac{h}{a_i}\cdot\frac{a_i \tilde f_i}h$.  
	Then
	\begin{equation*}
		\max_{0\leq i\leq m-1}\{\log | \tilde f_i|\}\leq \max_{0\leq i\leq m-1}\{\log |a_i \tilde f_i/h|\}+\log|h|+\max_{0\leq i\leq m-1}\{\log 1/|a_i|\}.
	\end{equation*}
	Hence,
	\begin{equation}\label{Charf_k2}
		T_{\tilde{\mathbf f}}(r)\leq T_{\mathbf{F}}(r)+(m-1) T_{\mathbf a}(r)+N_h(0,r).
	\end{equation}
	Since 	$\tilde f_0,\hdots,\tilde f_{m-1}$ have no common zeros, the zeros of $h$ must be zeros of some $a_i$, $0\le i\le m-1$.  Therefore,	
	$$
	N_h(0,r)\le \sum_{i=0}^{m-1} N_{a_i}(0,r)\le m T_{\mathbf a}(r)+O(1)
	$$		 
	by 	\eqref{zeroai}.	 Consequently,  we derive from \eqref{Charf_k2} that 
	\begin{equation}\label{Charf_k3}
		T_{\tilde{\mathbf f}}(r)\leq T_{\mathbf{F}}(r)+(2m-1) T_{\mathbf a}(r)+O(1) .
	\end{equation}

	Applying Theorem \ref{trunborel} to the map $\mathbf{F}$ with the equation
	\begin{equation}\label{propersubsum3}
		\frac{a_0 \tilde f_0}h  +\cdots+\frac{a_m \tilde f_m}h  =0,
	\end{equation} we have
	\begin{equation*}\label{truncation2}
		\begin{aligned}
			T_{\mathbf{F}}(r)&\leq_{\exc}  \sum_{i=0}^{m} N_{a_i  \tilde f_i/h}^{(m-1)}(0,r) +O(  \log T_{\mathbf{F}}(r))\\
			&\le_{\exc} \sum_{i=0}^{m} N_{a_i}(0,r) +\sum_{i=0}^{m} N_{ f_i }^{(m-1)}(0,r)+O( \log T_{\mathbf{f}}(r))\qquad(\text{as } \tilde f_i=f_i/g )\\
			&\le_{\exc}  (m+1) T_{\mathbf a}(r) +\frac{m-1}{\ell}\sum_{i=0}^{m} N_{ f_i }  (0,r)+O( \log  T_{\mathbf{f}}(r))\qquad(\text{by \eqref{zeroai}})\\
			&\le_{\exc}  (m+1) T_{\mathbf a}(r) +\frac{m^2-1}{\ell}T_{\mathbf{f}}(r)+O( \log  T_{\mathbf{f}}(r)).
		\end{aligned}
	\end{equation*}
	Together with Proposition \ref{basic_prop}, \eqref{Charf_k3} and  the fact $m\le n$, this yields
	\begin{equation}
		T_{f_1/f_j}(r)\le T_{\tilde{\mathbf f}}(r)+O(1) \leq_{\exc} 3n\cdot T_{\mathbf a}(r)+\frac{n^2-1}{\ell}T_{\mathbf{f}}(r)+O(\log  T_{\mathbf{f}}(r))
	\end{equation}
	for any $0\le j\le m$.
\end{proof}
Let $Q \in\mathcal M[x_1,\hdots,x_n]$, where $\mathcal M$ is the field of meromorphic functions. We may express  
$Q =\sum_{{\mathbf i}\in I_Q } a_{\mathbf i}\cdot{\mathbf x}^{\mathbf i}$, where ${\mathbf i}=(i_1,\hdots,i_n)$, ${\mathbf x}^{\mathbf i}=x_1^{i_1}\hdots x_n^{i_n}$, and $a_{\mathbf i}\ne 0$ if ${\mathbf i}\in I_Q $.  
We let 
\begin{align}\label{norm}
	\| Q \|_z:=\max_{{\mathbf i}\in I_A}\{|a_{\mathbf i}(z)|\},
\end{align}
and define  the characteristic function of $Q$ as 
\begin{align}\label{charF}
	T_{Q }(r):=T_{[\cdots:a_{\mathbf i}:\cdots]}(r),
\end{align}
where $a_{\mathbf i}$ is taken for every ${\mathbf i}\in I_Q $.

\begin{corollary}\label{MborelCor}
	Let $f_0,\hdots,f_n$ be non-constant entire functions with no common zeros and the zero multiplicity of each $f_i$ be either zero or larger than a positive integer $\ell$.   
	Let  $Q$  be a non-constant homogeneous polynomial in $\mathbb C(\frac{u_1'}{u_1},\hdots,\frac{u_n'}{u_n})[x_0,\hdots,x_n]$, where $u_i=f_i/f_0$ for $1\le i\le n$.  Then 
	\begin{align}\label{heightA}
		T_{Q }(r)\le_{\exc}   \frac{c_1} \ell \cdot T_{\mathbf{f}}(r)\quad\text{ for some positive  constant $c_1$ independent of $\ell$},
	\end{align}
	where $\mathbf{f}=(f_0,\hdots,f_n)$.
	Furthermore, if  $Q (f_0,\hdots,f_n)=0$, then there exists a non-trivial $n $-tuple of integers $(j_1,\hdots,j_n)$ with $|j_1|+\cdots+|j_n|\le 2\deg Q $ and  a positive real $c_2$  independent of $\ell$ 
	such that  
	$$
	T_{ u_1^{j_1}\cdots  u_n ^{j_n} }(r)\le_{\exc} \frac {c_2}{\ell} \cdot T_{\mathbf{f}}(r).		
	$$ 
\end{corollary}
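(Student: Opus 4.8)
The plan is to prove \eqref{heightA} directly from Propositions \ref{basic_prop} and \ref{polyheight}, and then to obtain the monomial relation by dehomogenizing the identity $Q(f_0,\dots,f_n)=0$ and running the argument of Lemma \ref{Mborel1} on the monomials of $Q$ evaluated along $\mathbf{f}$. For \eqref{heightA}, I would write $Q=\sum_{\mathbf i\in I_Q}a_{\mathbf i}\mathbf x^{\mathbf i}$, fix one $\mathbf i_0\in I_Q$, and (after multiplying all $a_{\mathbf i}$ by a common meromorphic function, which alters $T_Q(r)$ only by $O(1)$) assume they are entire without common zeros; then Proposition \ref{basic_prop} gives $T_Q(r)\le\sum_{\mathbf i\in I_Q}T_{a_{\mathbf i}/a_{\mathbf i_0}}(r)+O(1)$, and since each $a_{\mathbf i}/a_{\mathbf i_0}$ lies in $\mathbb C(\tfrac{u_1'}{u_1},\dots,\tfrac{u_n'}{u_n})$, Proposition \ref{polyheight} bounds it by $\tfrac{c}{\ell}T_{\mathbf{f}}(r)$ with $c$ independent of $\ell$. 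Summing over the finitely many monomials of $Q$ yields \eqref{heightA}, with $c_1$ depending only on $n$ and $Q$.

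Now assume $Q(f_0,\dots,f_n)=0$ and set $d=\deg Q$. Dividing this identity by $f_0^d$ rewrites it as $\sum_{\mathbf j\in J}b_{\mathbf j}\,u_1^{j_1}\cdots u_n^{j_n}=0$, where $J$ is a set of distinct $n$-tuples $\mathbf j=(j_1,\dots,j_n)$ of nonnegative integers with $|\mathbf j|:=j_1+\dots+j_n\le d$, and the $b_{\mathbf j}$ are (after relabelling) exactly the coefficients of $Q$, so $T_{[\dots:b_{\mathbf j}:\dots]}(r)\le T_Q(r)+O(1)\leq_{\exc}\tfrac{c_1}{\ell}T_{\mathbf{f}}(r)$. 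Since the $u_k$ are nonconstant, $|J|\ge 2$. Clearing denominators again and setting $h_{\mathbf j}:=f_0^{\,d-|\mathbf j|}f_1^{j_1}\cdots f_n^{j_n}$, we get entire functions with $\sum_{\mathbf j\in J}b_{\mathbf j}h_{\mathbf j}=0$. Two features of this relation are what the proof exploits: each $h_{\mathbf j}$ is a monomial in $f_0,\dots,f_n$, hence its zero multiplicity at every point is $0$ or $>\ell$; and for $\mathbf j\ne\mathbf j'$ one has $h_{\mathbf j}/h_{\mathbf j'}=u_1^{j_1-j_1'}\cdots u_n^{j_n-j_n'}$, a nontrivial monomial with $\sum_k|j_k-j_k'|\le|\mathbf j|+|\mathbf j'|\le 2d$. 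It therefore suffices to find two distinct $\mathbf j,\mathbf j'\in J$ with $T_{h_{\mathbf j}/h_{\mathbf j'}}(r)\leq_{\exc}\tfrac{c_2}{\ell}T_{\mathbf{f}}(r)$.

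I would obtain such a pair by repeating the proof of Lemma \ref{Mborel1} with the monomials $h_{\mathbf j}$ in place of the $f_i$: choose a minimal vanishing subsum $\sum_{\mathbf j\in S}b_{\mathbf j}h_{\mathbf j}=0$ with $|S|=m+1\ge 2$. If $m=1$, the ratio of the two $h_{\mathbf j}$'s equals $-b_{\mathbf j'}/b_{\mathbf j}$, whose height is $\le T_Q(r)+O(1)\leq_{\exc}\tfrac{c_1}{\ell}T_{\mathbf{f}}(r)$ by \eqref{heightA}. If $m\ge 2$, make the $b_{\mathbf j}$ entire without common zeros, divide out $g_S:=\gcd_{\mathbf j\in S}h_{\mathbf j}$ and then $h':=\gcd_{\mathbf j\in S}(b_{\mathbf j}h_{\mathbf j}/g_S)$, form $\mathbf{F}:=\bigl(\,b_{\mathbf j}\,h_{\mathbf j}/(g_S h')\,\bigr)_{\mathbf j\in S}$ as in \eqref{Charf_k2}--\eqref{Charf_k3}, and apply Theorem \ref{trunborel} to $\mathbf{F}$, getting $T_{\mathbf{F}}(r)\leq_{\exc}\sum_{\mathbf j\in S}N_{b_{\mathbf j}}(0,r)+\sum_{\mathbf j\in S}N^{(m-1)}_{h_{\mathbf j}/g_S}(0,r)+O(\log T_{\mathbf{f}}(r))$. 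The first sum is $\leq_{\exc}\tfrac{C}{\ell}T_{\mathbf{f}}(r)$ by the first main theorem and \eqref{heightA}; granting a bound of the same shape for the second sum, the estimates in the proof of Lemma \ref{Mborel1} (the reverse of \eqref{Charf_k2}--\eqref{Charf_k3}, with $N_{h'}(0,r)\le\sum_{\mathbf j\in S}N_{b_{\mathbf j}}(0,r)$) together with Proposition \ref{basic_prop} convert $T_{\mathbf{F}}(r)\leq_{\exc}\tfrac{C'}{\ell}T_{\mathbf{f}}(r)+O(\log T_{\mathbf{f}}(r))$ into $T_{h_{\mathbf j}/h_{\mathbf j'}}(r)\leq_{\exc}\tfrac{c_2}{\ell}T_{\mathbf{f}}(r)$ for any two distinct $\mathbf j,\mathbf j'\in S$, the logarithmic term being absorbed since $T_{\mathbf{f}}(r)\to\infty$, with $c_2$ depending only on $n$ and $d$.

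The one genuinely new estimate --- and what I expect to be the main obstacle --- is bounding $\sum_{\mathbf j\in S}N^{(m-1)}_{h_{\mathbf j}/g_S}(0,r)$: here the termwise argument of Lemma \ref{Mborel1} fails, because dividing by $g_S$ can destroy the lower bound $\ell$ on the individual zero multiplicities of the monomials. The remedy is to bound the truncated counting functions collectively. Every zero of $h_{\mathbf j}/g_S$ is a zero of the product $f_0f_1\cdots f_n$, and at any zero of this product some $f_k$ vanishes to order $>\ell$, so $f_0\cdots f_n$ itself vanishes there to order $>\ell$. Hence $N^{(1)}_{f_0\cdots f_n}(0,r)\le\tfrac1\ell N_{f_0\cdots f_n}(0,r)=\tfrac1\ell\sum_{k=0}^{n}N_{f_k}(0,r)\le\tfrac{n+1}{\ell}T_{\mathbf{f}}(r)+O(1)$, and since $N^{(m-1)}_{h_{\mathbf j}/g_S}(0,r)$ assigns weight at most $m-1\le|J|-1$ to each zero of $f_0\cdots f_n$, we obtain $\sum_{\mathbf j\in S}N^{(m-1)}_{h_{\mathbf j}/g_S}(0,r)\le (m-1)|S|\,N^{(1)}_{f_0\cdots f_n}(0,r)\le\tfrac{C''}{\ell}T_{\mathbf{f}}(r)+O(1)$ with $C''$ depending only on $n$ and $d$. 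This supplies the missing bound and, together with the conversions above, completes the proof; the degenerate cases (some $h_{\mathbf j}/g_S$ constant, or $\mathbf F$ constant) fall out of the same inequalities.
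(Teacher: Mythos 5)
Your proposal is correct and follows essentially the same route as the paper: part one is verbatim the paper's argument via Propositions \ref{basic_prop} and \ref{polyheight}, and for part two the paper likewise passes to the relation among the degree-$\deg Q$ monomials $f_0^{i_0}\cdots f_n^{i_n}$ and extracts a small-height monomial ratio $u_1^{j_1}\cdots u_n^{j_n}$. The only difference is cosmetic: the paper simply applies Lemma \ref{Mborel1} to the tuple of \emph{all} degree-$m$ monomials (padded with zero coefficients, so the no-common-zeros hypothesis holds automatically), whereas you re-run the lemma's proof inline; the ``main obstacle'' you identify is already handled inside the proof of Lemma \ref{Mborel1}, which truncates against the original $f_i$ rather than the quotients by the gcd.
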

\begin{proof}
	Let $m=\deg Q $ and $Q =\sum_{{\mathbf i}\in I_Q}  a_{\mathbf i}\cdot{\mathbf x}^{\mathbf i}$, where ${\mathbf i}=(i_0,\hdots,i_n)$, with $|{\mathbf i}|=i_0+\cdots+i_n=m$,  ${\mathbf x}^{\mathbf i}=x_0^{i_0}\cdots x_n^{i_n}$,  and $a_{\mathbf i}\ne 0,$ if ${\mathbf i}\in I_Q $.  Let ${\mathbf i}_0\in I_Q $.
	By Proposition \ref{basic_prop}, we have
	$$
	T_Q (r)\le \sum_{{\mathbf i}\in I_Q} T_{\frac{a_{\mathbf i}}{a_{{\mathbf i}_0}}}(r).
	$$
	Then the first assertion follows from Corollary \ref{polyheight}.
	
	If $Q (f_0,\hdots,f_n)=0$, then 
	$ \sum_{{\mathbf i}\in I_Q } a_{\mathbf i} \cdot f_0^{i_0}\cdots f_n^{i_n}=0$.  
	Let ${\mathbf F}=[\cdots,f_0^{i_0}\cdots f_n^{i_n},\cdots]$, where the index set ${\mathbf i}$ runs over all $ {i_0}+\cdots +{i_n}=m$.   Then the zero multiplicities of $f_0^{i_0}\cdots f_n^{i_n}$ are at least $\ell$ if they are not zero.   We note that the entries of ${\mathbf F}$ have no common zeros and  $T_{\mathbf F}(r)\le m T_{\mathbf{f}}(r)+O(1)$.
	Then by   Lemma \ref{Mborel1} and \eqref{heightA}, we find ${\mathbf i}\in I_Q$ distinct from ${\mathbf i}_0=(i_0',\hdots,i_n')$ such that
	\begin{align}\label{Tui}
		T_{f_0^{i_0-i_0'}\cdots f_n^{i_n-i_n'}}(r)\le_{\exc}   \frac{c_2}{\ell} \cdot T_{\mathbf{f}}(r),
	\end{align}
	for some positive constant $c_2$ independent of $\ell$.
	Finally, since $|{\mathbf i}|=|{\mathbf i}_0|$  and $u_i=f_i/f_0$,  we have 
	$f_0^{i_0-i_0'}\cdots f_n^{i_n-i_n'}=u_1^{i_1-i_1'}\cdots u_n^{i_n-i_n'}$ to
	conclude the second assertion by \eqref{Tui}. 
\end{proof}

\section{GCD theorem}\label{main_lem}	

\subsection{GCD with moving targets of the same growth}\label{gcd}
Let $f$ and $g$ be meromorphic functions.  We let
\begin{equation*}
	n(f,g,r):=\sum_{|z|\le r}\min\{v_z^+(f),v_z^+(g)\}
\end{equation*}
and 
\begin{equation*}
	N_{\gcd}(f,g,r):=\int_0^r \frac{n(f,g,t)-n(f,g,0)}{t}dt+ n(f,g,0)\log r.
\end{equation*}
We will need the following  GCD theorem. 

\begin{theorem} \label{movinggcd}
Let $n\ge 2$ be a positive integer. Let  $g_0,\dots,g_n$ be non-constant entire functions with no common zeros and the zero multiplicity of each $g_i$ be either zero or larger than a positive integer $\ell$. Let $u_i=g_i/g_0$ for $1\le i\le n$.  Let $F$ and $G$ be coprime homogeneous polynomials in $\mathbb C(\frac{u_1'}{u_1},\hdots,\frac{u_n'}{u_n})[x_0,\hdots,x_n]$  such that not both of them are identically zero at $(1,0,\hdots,0),\hdots,(0,\hdots,0,1)$. Then for any  sufficiently small  $\epsilon>0$,
there exist  integers $N\ge O(\epsilon^{-3})$ and $m=O(\epsilon^{-1})$, both depending only on $\epsilon$, such that if $\ell>N$, then  we have  either
	\begin{align}\label{gcd0}
		N_{\rm gcd} (F(g_0,\hdots,g_n),G(g_0,\hdots,g_n),r)\le_{\rm exc}  
		\epsilon T_{\bf g}(r),  
	\end{align} 
	or 
	\begin{align}\label{gdegenerate}
		T_{(\frac{g_1}{g_0})^{m_1}\cdots (\frac{g_n}{g_0})^{m_n}}\le_{\rm exc}   \epsilon^3    T_{\bf g}(r)
	\end{align}
	for some non-trivial tuple of integers $(m_1,\hdots,m_n)$ with $|m_1|+\cdots+|m_n|\le 2m$, where ${\bf g}=(g_0,\hdots,g_n)$.  
\end{theorem}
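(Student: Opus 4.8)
The plan is to run the Corvaja--Zannier--Levin argument for greatest common divisors in the Nevanlinna category, the new feature being that the polynomials $F,G$ have coefficients of the same growth as $\mathbf g$ (indeed of size $O(\tfrac1\ell)$ times the growth of $\mathbf g$). The three inputs are: the truncated second main theorem for hypersurfaces in general position with bounded degeneracy degree, Theorem~\ref{SMTmoving} (used in the same‑growth form developed alongside); the height bounds for elements of $\mathbb C(\tfrac{u_1'}{u_1},\dots,\tfrac{u_n'}{u_n})$ from Corollary~\ref{MborelCor}; and the large‑multiplicity hypothesis on the $g_i$, which gives $N^{(k)}_{g_i}(0,r)\le \tfrac{k}{\ell}T_{\mathbf g}(r)$ for every fixed truncation level $k$.

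First I would normalise. If $\mathbf g$ lies in a hypersurface of $\mathbb P^n$ defined over $\mathbb C(\tfrac{u_1'}{u_1},\dots,\tfrac{u_n'}{u_n})$ of degree at most $m:=O(\epsilon^{-1})$ — in particular if $F(g_0,\dots,g_n)\equiv 0$ or $G(g_0,\dots,g_n)\equiv 0$ with $\deg F,\deg G\le m$ — then Corollary~\ref{MborelCor} produces a nonzero tuple $(m_1,\dots,m_n)$ with $|m_1|+\cdots+|m_n|\le 2m$ and $T_{u_1^{m_1}\cdots u_n^{m_n}}(r)\le_{\exc}\tfrac{c_2}{\ell}T_{\mathbf g}(r)$, which is \eqref{gdegenerate} as soon as $\ell> c_2\epsilon^{-3}$. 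So we may assume $F(\mathbf g),G(\mathbf g)\not\equiv 0$ and aim for \eqref{gcd0}. After replacing $F$ and $G$ by appropriate powers we may also take $\deg F=\deg G=:d$; this is harmless since $N_{\rm gcd}(F^a(\mathbf g),G^b(\mathbf g),r)\ge\min\{a,b\}\,N_{\rm gcd}(F(\mathbf g),G(\mathbf g),r)$.

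The heart of the proof is the filtration construction. Fix a large auxiliary degree $M=O(\epsilon^{-1})$, let $V_M$ be the space of degree‑$M$ homogeneous polynomials over $\mathbb C(\tfrac{u_1'}{u_1},\dots,\tfrac{u_n'}{u_n})$, and filter it by $\mathcal W^j:=(F,G)^j\cap V_M$, i.e.\ the sections of $\mathcal O(M)$ vanishing to order $\ge j$ along $Y=V(F,G)$, a locus of codimension $\ge 2$; equivalently, after blowing up $Y$ with exceptional divisor $E$, the filtration by order of vanishing along $E$, and $N_{\rm gcd}(F(\mathbf g),G(\mathbf g),r)$ equals — up to a bounded factor depending only on the scheme structure of $Y$ — the counting function of the lift of $\mathbf g$ along $E$. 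The key point is that $\sigma\in\mathcal W^j$ forces $\sigma(\mathbf g)$ to vanish at each common zero $z$ of $F(\mathbf g),G(\mathbf g)$ to order $\ge j\cdot\min\{v^+_z(F(\mathbf g)),v^+_z(G(\mathbf g))\}$. I would then take generic members of the $\mathcal W^j$ over a suitable range of levels, use coprimality of $F,G$ together with the hypothesis that $F,G$ are not both zero at the coordinate vertices (and their general position) to check that the resulting hypersurfaces and the coordinate hyperplanes $H_0,\dots,H_n$ are in general position in $\mathbb P^n$, and apply Theorem~\ref{SMTmoving} (same‑growth form) to $\mathbf g$ and this system. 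In the output, the terms $N^{(M_0)}_{\mathbf g}(H_i,r)=N^{(M_0)}_{g_i}(0,r)\le\tfrac{M_0}{\ell}T_{\mathbf g}(r)$, where $M_0=O(\epsilon^{-2}M)=O(\epsilon^{-3})$ is the truncation level of Theorem~\ref{SMTmoving}, and the coefficient‑height corrections $O(\tfrac1\ell T_{\mathbf g}(r))$ from Corollary~\ref{MborelCor}, are all negligible once $\ell>N:=O(\epsilon^{-3})$; dividing by the total filtration weight leaves exactly $N_{\rm gcd}(F(\mathbf g),G(\mathbf g),r)\le_{\exc}(\epsilon+O(\tfrac1\ell))T_{\mathbf g}(r)$, which is \eqref{gcd0} after renaming $\epsilon$. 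If instead the degeneracy alternative of Theorem~\ref{SMTmoving} occurs, the image of $\mathbf g$ lies in a hypersurface of degree $\le m=O(\epsilon^{-1})$, and Corollary~\ref{MborelCor} converts this into \eqref{gdegenerate} exactly as in the normalisation step.

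The main obstacle is the middle step: extracting from the ideal‑power filtration a collection of hypersurfaces that is simultaneously in general position in $\mathbb P^n$ (so the deficiency count in Theorem~\ref{SMTmoving} is the correct one), heavily incident to $Y=V(F,G)$ (so the filtration weight times $N_{\rm gcd}$ genuinely appears on the counting side), and compatible with $H_0,\dots,H_n$ and with the moving coefficients of the same growth (so every error term is truly $O(\tfrac1\ell T_{\mathbf g}(r))$). This is the Corvaja--Zannier mechanism, and its quantitative version here is what pins down the shapes $N=O(\epsilon^{-3})$, $m=O(\epsilon^{-1})$ and the exponent $\epsilon^3$ in \eqref{gdegenerate}; carrying it out requires a careful choice of $M$ and of the levels used, together with the general‑position bookkeeping that leans on the vertex and coprimality hypotheses.
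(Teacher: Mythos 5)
Your overall architecture (normalise, reduce the identically-zero and degenerate cases to Corollary~\ref{MborelCor}, then run a Corvaja--Zannier--Levin construction whose error terms are $O(\tfrac1\ell T_{\mathbf g}(r))$) matches the paper in spirit, but the engine you propose for the main step is not the one the paper uses, and as described it has a genuine gap. You propose to filter $V_M$ by the powers $(F,G)^j$ and to feed ``generic members'' of these spaces, together with the coordinate hyperplanes, into Theorem~\ref{SMTmoving}. Two problems. First, Theorem~\ref{SMTmoving} is a second main theorem for \emph{fixed} hypersurfaces with constant coefficients; the paper never develops (and you cannot simply assert) a ``same-growth form'' of the \emph{hypersurface} SMT. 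The only same-growth SMT available is Theorem~\ref{movingsmt0}, which is for moving \emph{hyperplanes} and carries a Wronskian term. Second, and more fundamentally, every element of $(F,G)^j\cap V_M$ vanishes on $Y=V(F,G)$, so whenever $Y\neq\varnothing$ no $n+1$ of your auxiliary hypersurfaces can be in general position in $\mathbb P^n$; you name this tension yourself (``simultaneously in general position \dots heavily incident to $Y$'') but do not resolve it, and it cannot be resolved by a generic choice inside the ideal. This is exactly why the method does not apply the hypersurface SMT to $\mathbf g$ in $\mathbb P^n$.

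What the paper actually does (Theorem~\ref{Mfundamental}) is to linearise: it takes a basis $\phi_1,\dots,\phi_M$ of the degree-$m$ piece $(F,G)_m$ consisting of $F\mathbf x^{\mathbf i}$, $G\mathbf x^{\mathbf i}$ (only the first power of the ideal), writes each monomial $\mathbf x^{\mathbf i}$ outside a monomial basis of the quotient $\mathbb K_{\mathbf g}[\mathbf x]_m/(F,G)_m$ as a linear form $L_{z,\mathbf i}$ in the $\phi_\ell$ with coefficients in the finite-dimensional space $V_{F,G}(M)$ (via Cramer's rule), and applies the moving-hyperplane, same-growth SMT (Theorem~\ref{movingsmt0}) to the auxiliary map $\Phi(\mathbf g):\mathbb C\to\mathbb P^{M-1}$; general position is then a statement about the matrices $B_z$ being invertible, which holds by construction. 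The gcd enters because every $\phi_\ell(\mathbf g)$ is divisible by the common zeros of $F(\mathbf g)$ and $G(\mathbf g)$. Note also that the hypothesis that $F,G$ are not both zero at the coordinate vertices is used not for general position of your auxiliary system but to guarantee $(d,0,\dots,0),\dots,(0,\dots,0,d)\in I$, whence $N_{\rm gcd}(\{\mathbf g^{\mathbf i}\}_{\mathbf i\in I},r)=0$; and the alternative \eqref{gdegenerate} arises from \emph{linear} degeneracy of the degree-$m$ monomials in $\mathbf g$ over $V_{F,G}(Mb+1)$ (handled by Corollary~\ref{MborelCor}), not from the degeneracy clause of Theorem~\ref{SMTmoving}. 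To complete your argument you would need either to supply the linearisation step above or to prove a same-growth moving-hypersurface SMT and a workaround for the general-position failure; neither is routine.
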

Let $K_{{\bf g}}:=\{a: a \text{ is a meromorphic function with }T_{a}(r)\le {\rm o} (T_{\mathbf g}(r))\}$.
In \cite{levin2019greatest}, Levin and the second author showed that \eqref{gcd0} holds for $F$ and $G$ with coefficients in $\mathbb C$ or $K_{{\bf g}}$ under the assumption that $g_i$, $0\le i\le n$, are entire functions with no zeros and they are multiplicatively independent over $\mathbb C$ or $K_{{\bf g}}$ respectively.  In view of Proposition \ref{polyheight}, the coefficients of $F$ and $G$ is in $ \mathbb K_{\bf g}:=\{a: a \text{ is a meromorphic function with }T_{a}(r)\le {\rm O} (T_{\mathbf g}(r))\},$ which is a field of the same growth w.r.t. ${\bf g}$.   
Therefore,  we need to develop a Nevanlinna's second main theorem with moving targets of the same growth in order to adapt the proof in \cite{levin2019greatest}.

\subsection{Nevanlinna Theory with Moving Targets of the Same Growth}
In view of the second main theorem for function fields with moving targets developed in \cite{Wirsing} and \cite{Wa2004}, it comes naturally to  develop a Nevanlinna's second main theorem with moving targets of the same growth.  We will follow the ideas in \cite{Wirsing}.
Let ${\mathbf f}=(f_0,\dots,f_n)$ be a holomorphic map from $\mathbb C$ to  $\PP^n$ where $f_0,f_1,\hdots,f_n$ are  entire functions without common zeros.  
Let $a_{0}, \cdots,a_{n}$ be meromorphic functions  in $\mathbb K_{\bf f}$, and $L:=a_{0} X_0+\dots+a_{n} X_n$.  
Then  $L$ defines a  hyperplane $H$ in $\PP^n $, over the field $\mathbb K_{\bf f}$.  We note that  $H(z)$ is the hyperplane determined by the linear form $L(z) =a_{0}(z) X_0+\dots+a_{n}(z) X_n$   for $z\in\mathbb C$ that is not a common zero of $a_{0}, \cdots,a_{n}$, or a pole of any $a_{k}$, $0\le k\le n$.
The definition of the Weil function, proximity function and counting function can be easily extended to moving hyperplanes.  
For example,
\begin{align*}
	\lambda_{H(z)}(P)=-\log \frac{|(ha_{0})(z)x_0+\cdots+(ha_{n})(z)x_n|}{\max\{|x_0|,\dots ,|x_n|\}\max\{|(ha_{0})(z)|,\dots ,|(ha_{n})(z)|\}},
\end{align*}
where $h$ is a meromorphic function such that  $ha_{0}, \cdots,ha_{n} $ are entire functions without common zeros, $P=(x_0,\hdots,x_n)\in \PP^n(\mathbb C)$ and $z\in\mathbb C$.
It's clear that 
\begin{align}\label{Weil}
	\lambda_{H(z)}(P)=-\log \frac{| a_{0} (z)x_0+\cdots+ a_{n} (z)x_n|}{\max\{|x_0|,\dots ,|x_n|\}\max\{| a_{0} (z)|,\dots ,| a_{n} (z)|\}},
\end{align}
for $z\in\mathbb C$ which is not a common zero of $a_{0}, \cdots,a_{n}$, or a pole of any $a_{k}$, $0\le k\le n$.
The first main theorem for a moving hyperplane $H$  can be stated as
\begin{equation}\label{fmtmov}
	T_{\mathbf{f}}(r) =N_{\mathbf{f}}(H,r)+m_{\mathbf{f}}(H,r)+ T_{\mathbf a}(r)+O(1), 
\end{equation}
where ${\mathbf a}:=[a_0:\cdots:a_n]$. 

We will reformulate the second main theorem with moving targets stated in \cite[Theorem A6.2.1]{{ru2021nevanlinna}} to the situation where the coefficients of the underlying linear forms are in $\mathbb K_{\bf f}$. 
Let  $a_{j0}, \cdots,a_{jn}\in  \mathbb K_{\bf f}$ and let $L_j:=a_{j0} X_0+\dots+a_{jn} X_n$.	
Without loss of generality, we will normalize the linear forms $L_j$, $1\le j\le q$,  such that for each $1\le j\le q$, there exists $0\le j'\le n$ such that $a_{ij'}=1$.  Let $t$ be a positive integer and let $V(t)$ be the complex vector space spanned by the elements
\begin{align*}
	\left\{ \prod a_{jk}^{n_{jk}} : n_{jk}\ge 0,\,\sum n_{jk}=t \right\},
\end{align*}
where the products and sums run over $1\le j\le q$ and $0\le k\le n$.
Let $1=b_1,\cdots,b_u$ be a basis of $V(t)$ and $b_1,\cdots,b_w$ a basis of $V(t+1)$.  It's clear that $u\le w$.
Moreover, we have 
\begin{align}\label{vlimit}
	\liminf_{t\to\infty}\dim V(t+1)/\dim V(t)=1. 
\end{align}
\begin{definition}
	Let $E $ be a $\mathbb C$-vector space spanned by finitely many meromorphic functions.  We say that an analytic map ${\mathbf f}=(f_0,\dots,f_n):\mathbb C\to\mathbb P^n$ is linearly nondegenerate over $E$ if whenever we have a linear combination 
	$\sum_{i=1}^{m}a_{i}f_{i}=0$ with $a_{i}\in E$, then $a_{i}=0$ for each $i$;
	otherwise we say
	that ${\mathbf f}$ is  linearly degenerate over $E$. 
\end{definition}

The following formulation of the second main theorem with moving targets in  $\mathbb K_{\bf f}$ follows from the proof of  \cite[Theorem A6.2.1]{{ru2021nevanlinna}} by adding the Wronskian term and computing the error term explicitly when applying the second main theorem.	 
\begin{theorem}
	\label{movingsmt0}
	Let ${\mathbf f}=(f_0,\hdots,f_n):\mathbb{C}\to\mathbb{P}^n(\mathbb{C})$ be a holomorphic curve  where $f_0,f_1,\hdots,f_n$ are  entire functions without common zeros.  Let $H_j$, $1\le j\le q$, be arbitrary (moving) hyperplanes given by $L_j:=a_{j0} X_0+\dots+a_{jn} X_n$ where $a_{j0}, \cdots,a_{jn}\in  \mathbb K_{\bf f}$.  Denote by $W$ the Wronskian of $\{hb_mf_k\,|\, 1\le m\le w,\,  0\le k\le n\}$, where $h$ is a meromorphic function such that $hb_1,\hdots, hb_w$ are entire functions without common zeros.  If ${\mathbf f}$ is linearly non-degenerate over $ V(t+1) $, then for any $\varepsilon>0$, we have the following inequality:
	\begin{align*} \int_0^{2\pi} \max_J \sum_{k\in J}&\lambda_{H_k(re^{i\theta})}({\mathbf f}(re^{i\theta}))\frac{d\theta}{2\pi}+\frac 1uN_{W}(0,r)\\
		&\leq_{\operatorname{exc}}   \left(\frac wu (n+1)+\epsilon\right)T_{\mathbf{f}}(r)+\frac wu (t+2) \max_{1\le j\le q } T_{\mathbf a_{j}}(r),
	\end{align*}  where $w=\dim V(t+1)$, $u=\dim V(t)$, ${\mathbf a_j}=[a_{j0}:\cdots: a_{jn}]$ and the maximum is taken over all subsets $J$ of $\{1,\dots, q\}$ such that $H_j(re^{i\theta})$, $j\in J$, are in general position. 
\end{theorem}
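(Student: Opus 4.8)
The plan is to reduce Theorem \ref{movingsmt0} to the known moving-target second main theorem \cite[Theorem A6.2.1]{ru2021nevanlinna}, while (a) keeping track of the Wronskian term and (b) replacing the qualitative error term $\mathrm o(T_{\mathbf f}(r))$ of the small-function theory by an explicit linear-in-$T_{\mathbf f}$ bound, using the fact that every $a_{jk}\in\mathbb K_{\bf f}$ satisfies $T_{a_{jk}}(r)\le \mathrm O(T_{\mathbf f}(r))$. The standard device (Wirsing, Ru) is to pass from $\mathbf f$ into $\mathbb P^n$ to an auxiliary map $\mathbf F$ into a larger projective space whose coordinates are the products $hb_mf_k$, $1\le m\le w$, $0\le k\le n$, where $b_1,\dots,b_w$ is a basis of $V(t+1)$ and $h$ clears denominators so that $\{hb_mf_k\}$ are entire with no common zero. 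Then a moving hyperplane $H_j$ for $\mathbf f$ pulls back, after multiplying by the $b_m$'s, to a fixed-coefficient linear subspace condition for $\mathbf F$; more precisely $b_m L_j(\mathbf f)$ lies in the $\mathbb C$-span of the coordinates of $\mathbf F$ because $b_m a_{jk}\in V(t+1)$, hence is a $\mathbb C$-linear combination of $b_1,\dots,b_w$.

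First I would set up the linear algebra carefully: since $\mathbf f$ is linearly nondegenerate over $V(t+1)$, the auxiliary map $\mathbf F$ (of dimension at most $w(n+1)$) is linearly nondegenerate in its own projective space, so Cartan's (constant-coefficient) second main theorem with the Wronskian term applies to $\mathbf F$ and the $qu$ fixed hyperplanes coming from the $u$-element basis of $V(t)$ times the $q$ forms $L_j$. Second, I would translate Weil functions: $\lambda_{H_j(z)}(\mathbf f(z))$ differs from the corresponding sum of Weil functions for $\mathbf F$ by bounded error plus terms controlled by $\max_j T_{\mathbf a_j}(r)$ and by $T$ of the $b_m$'s; the key numerical input is that each $b_m$, being a monomial of degree $t+1$ in the normalized coefficients $a_{jk}$, has $T_{b_m}(r)\le (t+1)\max_j T_{\mathbf a_j}(r)+\mathrm O(1)$, which is where the factor $\frac wu(t+2)\max_j T_{\mathbf a_j}(r)$ in the statement comes from (one extra unit absorbing the normalization and $h$). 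Third, I would relate $T_{\mathbf F}(r)$ to $T_{\mathbf f}(r)$: one has $T_{\mathbf F}(r)\le (n+1)T_{\mathbf f}(r)\cdot$(something) — more carefully, by sub-additivity $T_{\mathbf F}(r)\le w\,T_{\mathbf f}(r)+\sum_m T_{b_m}(r)+\mathrm O(1)$, and after dividing the Cartan estimate by the appropriate multiplicity one gets the coefficient $\frac wu(n+1)+\epsilon$ on $T_{\mathbf f}(r)$. The $\epsilon$ is free because in Cartan's theorem the error is $\mathrm O(\log(rT_{\mathbf F}(r)))\le_{\mathrm{exc}}\epsilon T_{\mathbf f}(r)$; I would also use \eqref{vlimit}, i.e. $\liminf_{t\to\infty}w/u=1$, only implicitly — here $t$ is fixed and $w/u$ is just a constant.

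The Wronskian bookkeeping is the step that needs the most care. The term $\frac1u N_W(0,r)$ in the conclusion is the image, under the normalization/descent from $\mathbf F$ back to $\mathbf f$, of the Wronskian counting term $N_{W_{\mathbf F}}(0,r)$ appearing in Cartan's theorem for $\mathbf F$: since $\mathbf F$ has $w(n+1)$ homogeneous coordinates but its projective dimension may be smaller (the coordinates $hb_mf_k$ satisfy the obvious multiplicative relations), one must take the Wronskian of a maximal linearly independent subset, and the claim is that this can be taken to be exactly $\{hb_mf_k : 1\le m\le w,\ 0\le k\le n\}$ after discarding the dependent ones — but the statement as written already fixes $W$ to be the Wronskian of that full set, so I would verify that the extra (dependent) columns only change $W$ by a nonzero meromorphic factor whose zeros contribute a nonnegative quantity, hence can be dropped from a lower bound; the division by $u=\dim V(t)$ then accounts for the fact that each "genuine" $f_k$-direction is repeated roughly $u$ times among the $b_mf_k$. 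The main obstacle I anticipate is precisely making this descent quantitatively honest — pinning down why the coefficient of $N_W(0,r)$ is $\frac1u$ and not some other function of $u,w$, and confirming that all the auxiliary choices ($h$, the basis $\{b_m\}$, the normalization $a_{jj'}=1$) contribute only $\mathrm O(1)$ or terms already displayed; everything else is a routine transcription of the Vojta–Ru–Wirsing argument from the value-distribution-over-function-fields setting (cf. \cite{Wirsing}, \cite{Wa2004}) into the Nevanlinna setting, using the lemma of the logarithmic derivative to handle the proximity part of the Wronskian.
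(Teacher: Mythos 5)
Your proposal follows the same route the paper itself indicates: the paper states that Theorem~\ref{movingsmt0} ``follows from the proof of [Ru, Theorem~A6.2.1] by adding the Wronskian term and computing the error term explicitly,'' i.e.\ precisely the Steinmetz--Wirsing--Ru device of passing to the auxiliary map $\mathbf F=(hb_mf_k)$ into a higher-dimensional projective space and applying Cartan's constant-coefficient second main theorem (with the Wronskian) to the $qu$ fixed hyperplanes built from the $u$-element basis of $V(t)$ times the forms $L_j$. So this is essentially the paper's proof, and your overall plan is sound.

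Two slips in the write-up are worth correcting, though neither reflects a gap in the method. First, you at one point say the coordinates of $\mathbf F$ ``may be'' linearly dependent because of multiplicative relations and speak of ``discarding the dependent ones''; but multiplicative relations are irrelevant to linear independence, and the hypothesis that $\mathbf f$ is linearly nondegenerate over $V(t+1)$ is \emph{exactly} what forces the $w(n+1)$ functions $hb_mf_k$ to be $\mathbb{C}$-linearly independent (indeed, otherwise $W\equiv 0$ and the term $N_W(0,r)$ in the statement would be meaningless). You actually note this correctly in the preceding sentence, so the later worry is an internal inconsistency rather than an error of substance, but it should be removed. Second, the inequality $T_{\mathbf F}(r)\le w\,T_{\mathbf f}(r)+\sum_m T_{b_m}(r)+O(1)$ you write is not what you need and would overshoot the constant in the conclusion to $\frac{w^2}{u}(n+1)$; the correct estimate, coming from $\max_{m,k}|hb_m f_k|=(\max_m|hb_m|)(\max_k|f_k|)$ and the fact that the two families separately have no common zeros, is $T_{\mathbf F}(r)=T_{\mathbf f}(r)+T_{[hb_1:\cdots:hb_w]}(r)+O(1)$, with coefficient $1$ (not $w$) on $T_{\mathbf f}(r)$. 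Since you nevertheless land on the correct constant $\frac{w}{u}(n+1)+\epsilon$, this appears to be a slip rather than an actual mis-computation, but the displayed inequality as written is wrong and would propagate to a wrong leading coefficient if taken literally.
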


\subsection{The key theorem and the proof of Theorem \ref{movinggcd}	}
The following is the key theorem to prove Theorem \ref{movinggcd}.  
\begin{theorem}\label{Mfundamental}
	Let $n\ge 2$ be a positive integer. Let  $g_0,\hdots,g_n$ be entire functions without common zeros and let ${\bf g}=(g_0,\hdots,g_n)$.
	Let $F,G $ be coprime homogeneous polynomials in $n+1$ variables of the same degree $d>0$  over $\mathbb K_{\bf g}$.  Assume that one of the coefficients in each expansion of $F$ and $G$ is 1.
	Let $I$ be the set of exponents ${\bf i}$ such that ${\bf x}^{\bf i}$ appears with a nonzero coefficient in either $F$ or $G$.  
	For every positive integer $t$,
	we denote by $V_{F,G}(t)$ the (finite-dimensional) $\mathbb C$-vector
	space  spanned by $\prod_{\alpha}\alpha^{n_{\alpha}}$,
	where $\alpha$ runs through all non-zero coefficients of $F$ and $G$, $n_{\alpha}\ge0$ and $\sum n_{\alpha}=t$; we also put
	$d_{t}:=\dim V_{F,G}(t)$.  
	For every  integer  $m\ge d$, we let $M=M_{m,n,d}=2\binom {m+n-d}{n}- \binom {m+n-2d}{n}$.    
	Then there exists a positive real  $c$ and a positive integer  $b$  such that if the set $\{g_0^{i_0}\cdots g_n^{i_n}: i_0+\cdots+i_n=m\}$ is linearly non-degenerate over $V_{F,G}(Mb+1)$, then   we have that  the following estimate.
	\begin{align*}
		MN_{\rm gcd}&(F({\bf g}),G({\bf g}),r) 
		\le_{\rm exc}  
		\left(M'+\frac{d_{Mb}}{d_{M(b-1)}}M-M\right)mnT_{\bf g}(r)\\
		&+c_{m,n,d}  \sum_{i=0}^n N^{(L)}_{ g_i}(0,r)+ \left(\frac{m}{n+1}\binom{m+n}{n}-c_{m,n,d}-M'm\right)\sum_{i=0}^nN_{g_i}(0,r)\\
		& + \binom {m+n-2d}{n}N_{\rm gcd}(\{{\bf g}^{\bf i}\}_{{\bf i}\in I},r) 
		+  c(T_{ F}(r)+T_{  G}(r)) +O(\log T_{\mathbf{g}}(r)),
	\end{align*}
	where $c_{m,n,d}=2\binom {m+n-d}{n+1}- \binom {m+n-2d}{n+1}$,
	$M'$ is an integer of order $O(m^{n-2})$,  $L=\frac12M(M-1)c_{m,n,d}^{-1}$  and $c  =\frac{d_{Mb}}{d_{M(b-1)}}(1+M(b+1))$.
\end{theorem}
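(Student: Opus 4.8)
The plan is to realize $N_{\rm gcd}(F({\bf g}),G({\bf g}),r)$ as a contribution of zeros that can be detected by the filtration argument of Corvaja–Zannier, as adapted to Nevanlinna theory in \cite{levin2019greatest}, but now keeping careful track of the moving-target error since the coefficients of $F$ and $G$ live in $\mathbb K_{\bf g}$ rather than in a field of small functions. First I would fix a large integer $m\ge d$ and pass to the space $V_m$ of degree-$m$ monomials in ${\bf g}=(g_0,\hdots,g_n)$, of dimension $\binom{m+n}{n}$, and consider inside it the two subspaces $F\cdot V_{m-d}$ and $G\cdot V_{m-d}$. Their intersection has dimension $\binom{m+n-2d}{n}$ (since $F,G$ are coprime, $F V_{m-d}\cap G V_{m-d}=FG\cdot V_{m-2d}$), so the span $V_{m-d}F+V_{m-d}G$ has dimension exactly $M=2\binom{m+n-d}{n}-\binom{m+n-2d}{n}$. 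I would choose a basis $\psi_1,\hdots,\psi_M$ of this span adapted to a flag exhausting $F V_{m-d}$ first, then filling up with $G V_{m-d}$; the point is that at a common zero $z$ of $F({\bf g})$ and $G({\bf g})$, every basis element of the flag vanishes to order at least $\min\{v_z^+(F({\bf g})),v_z^+(G({\bf g}))\}$ (up to the order of vanishing coming from the $g_i$ themselves), so the Wronskian of $\psi_1,\hdots,\psi_M$ picks up a large contribution there. Summing the vanishing orders across the flag contributes the factor $M$ on the left, and produces the coefficient $c_{m,n,d}=2\binom{m+n-d}{n+1}-\binom{m+n-2d}{n+1}$ in front of the $N_{g_i}(0,r)$ terms (this is just $\sum$ over the flag of the degrees of the relevant monomial spaces).

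Next I would apply Theorem \ref{movingsmt0} — the second main theorem with moving targets in $\mathbb K_{\bf g}$ — to the holomorphic curve given by the degree-$m$ monomials $\{{\bf g}^{\bf i}:|{\bf i}|=m\}$, with the moving hyperplanes being the $M$ linear forms whose coefficient vectors are the rows expressing $\psi_1,\hdots,\psi_M$ in the monomial basis (so the coefficients are polynomials in the coefficients of $F$ and $G$, hence in $V_{F,G}(d)\subset\mathbb K_{\bf g}$), together with the $\binom{m+n}{n}-M$ "coordinate" hyperplanes $[x_{\bf i}=0]$ needed to make the total count $\ge \binom{m+n}{n}+1$. The linear non-degeneracy hypothesis of Theorem \ref{movingsmt0} over $V(t+1)$ is exactly the hypothesis I put in the statement, that $\{{\bf g}^{\bf i}\}$ is linearly non-degenerate over $V_{F,G}(Mb+1)$, with $t=Mb$ chosen so that, by \eqref{vlimit}, the ratio $w/u=d_{Mb+1}/d_{Mb}$ (and $d_{Mb}/d_{M(b-1)}$) is as close to $1$ as desired — this is where the free parameter $b$ and the constant $c=\frac{d_{Mb}}{d_{M(b-1)}}(1+M(b+1))$ enter, bounding the multiplier $\tfrac wu(n+1)$ and the moving-target term $\tfrac wu(t+2)\max_j T_{{\bf a}_j}(r)$; since $T_{{\bf a}_j}(r)\le O(T_F(r)+T_G(r))$, that term is absorbed into $c(T_F(r)+T_G(r))$. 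The Wronskian term $\tfrac1u N_W(0,r)$ on the left of Theorem \ref{movingsmt0} is then estimated below by the gcd contribution: at a common zero of $F({\bf g}),G({\bf g})$ the Wronskian $W$ of the full collection $\{hb_m{\bf g}^{\bf i}\}$ vanishes to order at least $M\cdot\min\{v_z^+F({\bf g}),v_z^+G({\bf g})\}$ minus a correction of size $\le \frac12 M(M-1)$ from the "ramification"/order-of-contact of the flag plus the zeros of the $g_i$ (this correction is the source of the truncated terms $N^{(L)}_{g_i}(0,r)$ with $L=\frac12M(M-1)c_{m,n,d}^{-1}$ and of the $N_{\rm gcd}(\{{\bf g}^{\bf i}\}_{{\bf i}\in I},r)$ term coming from the common factor $\gcd_{{\bf i}\in I}{\bf g}^{\bf i}$ that the normalization of $F,G$ forces out).

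Finally I would collect terms: the proximity-function side of Theorem \ref{movingsmt0} together with the first main theorem converts $\sum \lambda_{H_k}$ into $\sum N_{\bf f}(H_k,r)$; the moving hyperplanes built from $F,G$ give $N$-contributions bounded by degree-$m$ counting of $F({\bf g}),G({\bf g})$, i.e. by $\frac{m}{n+1}\binom{m+n}{n}\sum_i N_{g_i}(0,r)$ type bounds after subtracting the part already charged to the Wronskian, and the coordinate hyperplanes $[x_{\bf i}=0]$ contribute $N_{{\bf g}^{\bf i}}(0,r)\le mT_{\bf g}(r)$-type terms whose count $M'$ is of order $O(m^{n-2})$ (the codimension of $V_{m-d}F+V_{m-d}G$ inside $V_m$ is $O(m^{n-1})$, but the careful flag choice reduces the genuinely "extra" hyperplanes to $O(m^{n-2})$ — this bookkeeping is the combinatorial heart of Corvaja–Zannier's method). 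Rearranging gives the displayed inequality. The main obstacle I expect is precisely this last accounting step: one must choose the flag of subspaces of $V_m$ so that the vanishing orders telescope correctly and so that the number of auxiliary coordinate hyperplanes is only $O(m^{n-2})$ rather than $O(m^{n-1})$ — getting the constants $M$, $M'$, $c_{m,n,d}$, $L$ exactly as stated requires summing binomial coefficients over the flag with care, and making sure the moving-target error from Theorem \ref{movingsmt0} (which is linear in $t=Mb$) does not overwhelm the gain, which is why $b$ must be taken large only after $m$ and $\epsilon$ are fixed and why the final constant $c$ depends on $b$.
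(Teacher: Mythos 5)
Your overall frame (the Corvaja--Zannier filtration of the degree-$m$ piece of the ideal $(F,G)$, the moving-target second main theorem of Theorem \ref{movingsmt0}, the role of $b$ and \eqref{vlimit} in forcing $w/u$ close to $1$, and the absorption of the moving-target error into $c(T_F(r)+T_G(r))$) matches the paper's, but you apply the second main theorem to the wrong curve, and this breaks precisely the step where $N_{\rm gcd}$ enters the inequality. The paper applies Theorem \ref{movingsmt0} to $\Phi(\mathbf g)=(\phi_1(\mathbf g),\dots,\phi_M(\mathbf g)):\CC\to\PP^{M-1}$, where $\{\phi_\ell\}$ is a basis of $(F,G)_m$ consisting of elements $F\mathbf{x}^{\mathbf i},G\mathbf{x}^{\mathbf j}$; the moving hyperplanes are the $z$-dependent forms $L_{z,\mathbf i}$ obtained by expressing the monomials $\mathbf{x}^{\mathbf i}$, $\mathbf i\notin I_z$, in terms of the $\phi_\ell$ modulo a monomial basis $I_z$ of the quotient $\mathbb K_{\mathbf g}[\mathbf x]_m/(F,G)_m$, with coefficients in $V_{F,G}(M-1)$ by Cramer's rule. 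In that setup every coordinate $\phi_\ell(\mathbf g)$ is divisible by $F(\mathbf g)$ or $G(\mathbf g)$, so the common-zero divisor of the coordinates of $\Phi(\mathbf g)$ contains the gcd; applying the first main theorem to each of the $M$ hyperplanes and using $L_{z,\mathbf i}(\Phi(\mathbf g))=c_z(\mathbf g^{\mathbf i}+\cdots)$ extracts $M\,N_{\rm gcd}(F(\mathbf g),G(\mathbf g),r)$ exactly, with no truncation loss.

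You instead apply the theorem to the monomial map $[\cdots:\mathbf g^{\mathbf i}:\cdots]$ and propose to read the gcd off the Wronskian term $\frac1uN_{W}(0,r)$. This has two concrete problems. First, that term carries the prefactor $\frac1u$ with $u=d_{M(b-1)}$, so even an optimal vanishing estimate would put $\frac Mu N_{\rm gcd}$ on the left rather than $MN_{\rm gcd}$, which destroys the balance of constants. Second, and fatally, the Wronskian lower bound at a common zero $z_0$ of $F(\mathbf g)$ and $G(\mathbf g)$ has the form $M\mu_{z_0}-O(MK)$ per point, where $\mu_{z_0}=\min\{v_{z_0}^+(F(\mathbf g)),v_{z_0}^+(G(\mathbf g))\}$ and $K$ is the number of functions entering the Wronskian; summed over such points this loses $O(MK)\,N^{(1)}_{\rm gcd}(F(\mathbf g),G(\mathbf g),r)$. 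These common zeros are unrelated to the zeros of $g_0,\dots,g_n$ (generically they avoid the coordinate hyperplanes), so this loss is \emph{not} the source of the $N^{(L)}_{g_i}(0,r)$ terms as you assert --- in the paper those come from the Wronskian gain of $\Phi(\mathbf g)$ at zeros of the $g_i$, produced by the monomial factors $\mathbf g^{\mathbf i}$, $|\mathbf i|=m-d$, inside the $\phi_\ell(\mathbf g)$ --- and it cannot be absorbed, since $N^{(1)}_{\rm gcd}$ may be comparable to $N_{\rm gcd}$ itself. (A minor further point: $M'=\binom{m+n}{n}-M$ is $O(m^{n-2})$ simply because it is a second difference with step $d$ of a degree-$n$ polynomial in $m$; no flag choice is involved.)
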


\begin{proof}
	Most of the proof is identical to the  one of \cite[Theorem 5.7]{levin2019greatest},  except that it is necessary to estimate the characteristic functions of the linear forms when apply  Theorem \ref{movingsmt0}.  We will point out the important differences and omit the proof which is similar to the one of \cite[Theorem 5.7]{levin2019greatest}.   We also refer to \cite[Theorem 26]{GSW} for the structure of the proof and the explicit computations of all constants.

	Since one of the coefficients in the expansion of $F$ and $G$ is $1$, from the definition \eqref{norm} we have
	\begin{equation}\label{log_FG}
		\log\|F\|_z\ge 0 \text{\qquad and\qquad } \log\|G\|_z\ge 0
	\end{equation}
	for each $z\in\CC$ which is neither a zero nor a pole of the coefficients of $F$ and $G$. 
	Let $(F,G)$ be  the ideal generated by $F$ and $G$ in $\mathbb K_{\bf g}[\mathbf{x}]$.  If $(F,G)=1$, then it is elementary to show that $N_{\rm gcd} (F({\bf g}),G({\bf g}),r)\le c (T_{F}(r)+T_G(r)) $, where $c$ is a positive constant independent of ${\bf g}$.  Therefore, we assume that ideal $(F,G)$ is proper in  $\mathbb K_{\bf g}[\mathbf{x}]$. 	Let $(F,G)_m:=\mathbb K_{\bf g}[\mathbf{x}]_m\cap (F,G)$, where 
	$$\mathbb K_{\bf g}[\mathbf{x}]_m:=\{P\in \mathbb K_{\bf g}[\mathbf{x}]: P\text{ is a homogeneous polynomial of degree } m\}.$$ 	We choose $\{\phi_1,\dots,\phi_M\}$ to be a basis of the $\mathbb K_{\bf g}$-vector space $(F,G)_m$ consisting of elements of the form $F\mathbf{x^i},G\mathbf{x^i}$. 
	For each $z\in\CC$, we can construct a basis $B_z$ of $V_m:=\mathbb K_{\bf g}[\mathbf{x}]_m/(F,G)_m$ with monomial representatives $\mathbf{x}^{\mathbf{i}_1},\dots,\mathbf{x}^{\mathbf{i}_{M'}}$ as in the proof of \cite[Theorem 5.7]{levin2019greatest}. Let $I_z:=\{\mathbf{i}_1,\dots,\mathbf{i}_{M'}\}$. 
	For each $\mathbf{i}\notin I_z$, $|\mathbf{i}|=m$, we have 
	\begin{equation}\label{initial}
		{\mathbf x}^{\mathbf i}+\sum_{\mathbf j\in I_z}c_{z,{\mathbf j}}{\mathbf x}^{{\mathbf j}}\in(F,G)_{m}
	\end{equation}
	for some choice of $c_{z,{\mathbf i},j}\in \mathbb K_{\bf g}$.
	Then for each such $\mathbf{i}$, there is a linear form
	\begin{equation}\label{linear_form}
		L_{z,\mathbf{i}}:=\sum_{\ell=1}^M b_{z,{\mathbf i},\ell}y_{\ell}\in \mathbb K_{\bf g}[y_1,\dots,y_M]
	\end{equation}
	such that 
	\begin{equation}\label{linear_form_phi}
		L_{z,\mathbf{i}}(\phi_1,\dots,\phi_M)=c_z\left(\mathbf{x^i}+\sum_{\mathbf j\in I_z}c_{z,{\mathbf j}}{\mathbf x}^{{\mathbf j}}\right),
	\end{equation}
	where $c_z\in \mathbb K_{\bf g}^*$ will be chosen later. 
	By the choice of $\phi_\ell$, we may write 
	\begin{equation}\label{phi}
		\phi_\ell=\sum_{\mathbf{j}\notin I_z, |\mathbf{j}|=m} \alpha_{z,\ell,\mathbf{j}} \mathbf{x}^{\mathbf{j}}+\sum_{\mathbf{j}\in I_z}\alpha_{z,\ell,\mathbf{j}}\mathbf{x^j}, 
	\end{equation}
	where each $\alpha_{z,\ell,\mathbf{j}}$, $|\mathbf{j}|=m$, is a coefficient of either $F$ or $G$, thus 
	\begin{equation*}
		\max_{|\mathbf{j}|=m}\{\log|\alpha_{z,\ell,\mathbf{j}}|\}\le  \log\|F\|_z+\log\|G\|_z
	\end{equation*}
	for each $\ell$ and $\mathbf{j}$. Let $\Phi:=(\phi_1,\dots,\phi_M)$. 
	Combining \eqref{linear_form} and \eqref{phi}, we have 
	\begin{equation}\label{linear_form_phi_2}
		L_{z,\mathbf{i}}(\Phi(\mathbf{x}))=\sum_{\ell=1}^M b_{z,{\mathbf i},\ell}\left( \sum_{\mathbf{j}\notin I_z, |\mathbf{j}|=m} \alpha_{z,\ell,\mathbf{j}} \mathbf{x}^{\mathbf{j}}+\sum_{\mathbf{j}\in I_z}\alpha_{z,\ell,\mathbf{j}}\mathbf{x^j} \right).
	\end{equation}
	Next we define the $M\times M$ matrices 
	\begin{equation*}
		A_z:=(\alpha_{z,\ell,\mathbf{j}})_{\substack{1\le \ell\le M\\ \mathbf{j}\notin I_z, |\mathbf{j}|=m}} \text{\qquad and \qquad} B_z:=(b_{z,{\mathbf i},\ell})_{\substack{1\le \ell\le M\\ \mathbf{i}\notin I_z, |\mathbf{i}|=m}}.
	\end{equation*}
	By comparing \eqref{linear_form_phi} with \eqref{linear_form_phi_2}, we see that  $B_z=c_z A_z^{-1}$.  From now on, we let $c_z:=\det A_z$, which is in $V_{F,G}(M)$.  Then	
	\begin{equation}
		b_{z,{\mathbf i},\ell}\in V_{F,G}(M-1)
	\end{equation}
	for each $|\mathbf{i}|=m$, $\mathbf{i}\notin I_z$ and $1\le \ell\le M$ by Cramer's rule. This comparison also gives
	\begin{equation}\label{abc_relation}
		\begin{split}
			c_z&=\sum_{\ell=1}^M b_{z,{\mathbf i},\ell} \alpha_{z,\ell,\mathbf{i}} \in V_{F,G}(M)  \text{\qquad for each } |\mathbf{i}|=m, \mathbf{i}\notin I_z, \text{ and } \\
			c_zc_{z,\mathbf{j}}&=\sum_{\ell=1}^M b_{z,{\mathbf i},\ell} \alpha_{z,\ell,\mathbf{j}} \in V_{F,G}(M)   \text{\qquad for each } |\mathbf{i}|=m, \mathbf{i}\notin I_z \text{ and for each }\mathbf{j}\in I_z.
		\end{split}
	\end{equation}
	Then from the proof of \cite[Theorem 5.7]{levin2019greatest},  \eqref{log_FG}, \eqref{linear_form},  \eqref{linear_form_phi} and \eqref{abc_relation}, we obtain 
	\begin{equation}
		\begin{split}
			|L_{z,\mathbf{i}}(\Phi(\mathbf{g}(z)))|&\le \log |\mathbf{g}(z)^{\mathbf{i}}|+\max_{\mathbf{j}\in I_z}\{\log|c_z(z)|,\log|c_z(z) c_{z,\mathbf{j}}(z)|\}+O(1)\\
			&\le \log |\mathbf{g}(z)^{\mathbf{i}}|+\max_{\ell}\log |b_{z,{\mathbf i},\ell}(z)| +\log\|F\|_z+\log\|G\|_z+O(1),
		\end{split}
	\end{equation}
	which gives the following key inequality for computing the corresponding Weil functions
	$$
	|L_{z,\mathbf{i}}(\Phi(\mathbf{g}(z)))|-\|L_{z,\mathbf{i}}\|_z\le \log |\mathbf{g}(z)^{\mathbf{i}}|+\log\|F\|_z+\log\|G\|_z+O(1).
	$$
	
	To apply Theorem \ref{movingsmt0}, we first note that the coefficients of $ L_{z,\mathbf{i}} $ are in $V:=V_{F,G}(M)$ by \eqref{abc_relation}.
	Since the set $\{g_0^{i_0}\cdots g_n^{i_n}: i_0+\cdots+i_n=m\}$ is linearly non-degenerate over $V_{F,G}(Mb+1)$, we must have that  $\Phi(\mathbf{g}):\CC\to\PP^{M-1}$ is linearly nondegenerate over $V_{F,G}(Mb) =V(b)$ (as in Theorem \ref{movingsmt0}). 
	We also note that when computing Weil function \eqref{Weil}, we may assume that the coefficients of  $L_{z,\mathbf{i}}$ are entire functions without common zeros.   Then by the construction,     for each $z\in\mathbb C$, the hyperplanes defined by evaluating 
	the linear forms $ L_{z,\mathbf{i}} $ at $z$ with  $ |\mathbf{i}|=m$ and $\mathbf{i}\notin I_z$  are in general position.
	
	The other part of the arguments  is similar to the proof of \cite[Theorem 5.7]{levin2019greatest}, so we omit the details. 
\end{proof}


\begin{proof}[Proof of Theorem \ref{movinggcd}]
	Let $\alpha$ and $\beta$ be one of the nonzero coefficients of $F$ and $G$ respectively. Since $v_{z}^{+}(F({\bf g}))\le v_{z}^{+}(\frac{1}{\alpha}F({\bf g}))+v_{z}^{+}(\alpha)$ and $v_{z}^{+}(G({\bf g}))\le v_{z}^{+}(\frac{1}{\beta}G({\bf g}))+v_{z}^{+}(\beta)$ for each $z\in\mathbb C$, we have 
	\begin{align*}
		N_{S,{\rm gcd}}(F({\bf g}),G({\bf g}))&\le N_{S,{\rm gcd}}(\frac{1}{\alpha}F({\bf g}),\frac{1}{\beta}G({\bf g}))+N_{\alpha}(0,r)+N_{\beta}(0,r)\\
		&\le_{\exc} N_{S,{\rm gcd}}(\frac{1}{\alpha}F({\bf g}),\frac{1}{\beta}G({\bf g}))+ \frac{c_1} \ell \cdot T_{\mathbf{g}}(r) \quad\text{(by Proposition \ref{polyheight})}
	\end{align*}
	for some constant $c_1$.
	Therefore, we will assume that one of the coefficients in each expansion of $F$ and $G$ is 1.
	Recall that $M:=M_{m}:=2\binom{m+n-d}{n}-\binom{m+n-2d}{n}$  and $M':=M'_{m}:=\binom{m+n}{n}-M=O(m^{n-2})$.
	Elementary computations give that 
	\begin{align*}
		\binom {m+n}{n} &=\frac{m^n}{n!}+\frac{(n+1)m^{n-1}}{2(n-1)!}+O(m^{n-2}),\cr
		c_{m,n,d}&=\frac{m^{n+1}}{(n+1)!}+\frac{m^n}{2(n-1)!}+O(m^{n-1}),\cr
		M&= \frac{m^n}{n!}+ O(m^{n-1}). 
	\end{align*}
	Then 
	\begin{align*}
		\frac{m}{n+1}\binom{m+n}{n}-c_{m,n,d}&=O(m^{n-1}).
			\end{align*}

	Let $\epsilon>0$ be given. Due to the above estimates, we may  choose $m=O(\epsilon^{-1})$ 
	so that $m\ge2d$,
	\begin{align}\label{findm} 
		\frac{M'mn}{M}\le\frac{\epsilon}{4},\quad\text{and } \quad \frac1M\left(\frac{m}{n+1}\binom{m+n}{n}-c_{m,n,d}-M'm\right) \le\frac{\epsilon}{4(n+1)}.
	\end{align}
	By \eqref{vlimit} we
	may then choose a sufficiently large integer $b\in\mathbb{N}$ such
	that 
	\begin{align}
		\frac{w}{u}-1\le\frac{\epsilon}{4mn},\label{wu}
	\end{align}
	where $w:=\dim_{{\bf k}}V_{F,G}(Mb)$ and $u:=\dim_{{\bf k}}V_{F,G}(Mb-M)$.
	Suppose that  the set $\{g_0^{i_0}\cdots g_n^{i_n}: i_0+\cdots+i_n=m\}$ is linearly non-degenerate over $V_{F,G}(Mb+1)$.
	Then by Theorem \ref{Mfundamental}, 
	we have 
	\begin{align}\label{gcdest}
		N_{\rm gcd}&(F({\bf g}),G({\bf g}),r) 
		\le_{\rm exc}  
		\left(\frac{M'}{M}mn+(\frac{w}{u}-1)mn   \right)T_{\bf g}(r)\cr
		&+ \frac1M \left(\frac{m}{n+1}\binom{m+n}{n}-c_{m,n,d}-M'm\right)\sum_{i=0}^nN_{g_i}(0,r)+\frac{c_{m,n,d}}{M}  \sum_{i=0}^n N^{(L)}_{ g_i}(0,r)\cr
		& +\frac1M \binom {m+n-2d}{n}N_{\rm gcd}(\{{\bf g}^{\bf i}\}_{{\bf i}\in I},r) 
		+ \frac{c}{M}(T_{ F}(r)+T_{  G}(r)).
	\end{align}
	We note that the assumption  that not both $F$ and $G$ vanish at any of the points of $\{[1:0:\cdots:0],\hdots,[0:\cdots:0:1]\}$ implies that $(d,0, \hdots,0) ,\hdots, (0, \hdots,0,d) \in I$.  As $g_0,\hdots,g_n$ have no common zeros, we have 
	$N_{\rm gcd}(\{{\bf g}^{\bf i}\}_{{\bf i}\in I},r)=0$.  By \eqref{findm}, \eqref{wu} and that 
	\begin{align}\label{NT}
		N_{g_i}(0,r)\le T_{\bf g}(r)\quad\text{ for $0\le i\le n$, }
	\end{align}
	we see that the sum of first two terms of the right hand side of  \eqref{gcdest}  is bounded by  $\frac{3\epsilon}4  T_{\bf g}(r)$.
	By Corollary \ref{MborelCor},  $T_{F}(r)+T_{G}(r)\le_{\exc} \frac{c_2}{\ell} T_{\bf g}(r) $ for some positive constant $c_2$.  In conclusion, we derive from \eqref{gcdest} that 
	\begin{align}\label{gcdest2}
		N_{\rm gcd}(F({\bf g}),G({\bf g}),r) 
		&\le_{\rm exc}  \frac{3\epsilon}4 T_{\bf g}(r) +m \sum_{i=0}^n N^{(L)}_{ g_i}(0,r)    + \frac{ c_3}{M\ell}  T_{\mathbf{g}}(r)\cr
		&\le \frac{3\epsilon}4  T_{\bf g}(r) +\frac{mL}{\ell} \sum_{i=0}^n N_{ g_i}(0,r)    +  \frac{ c_3}{M\ell}  T_{\mathbf{g}}(r)\cr
		&\le (\frac{3\epsilon}4+\frac{m(n+1)L}{\ell}+ \frac{ c_3}{M\ell})   T_{\bf g}(r)\quad	\text{(by \eqref{NT})}
	\end{align}
	where $c_3=c\cdot c_2$.
	Let $N\ge O(\epsilon^{-3})$ be an integer greater than  $4((n+1)mL+c_3/M)\epsilon^{-1}$.  Then for  $\ell\ge N$, we   have
	$$
	N_{\rm gcd}(F({\bf g}),G({\bf g}),r)\le_{\exc} \epsilon T_{\bf g}(r). 
	$$

	Finally, if  the set $\{g_0^{i_0}\cdots g_n^{i_n}: i_0+\cdots+i_n=m\}$ is linearly  degenerate over $V_{F,G}(Mb+1)$, then there exists a homogeneous polynomial $A\in V_{F,G}(Mb+1)[x_0,\dots,x_n]$ of $\deg A=m$  such that $A(g_0,\dots,g_n)=0$. Hence we may apply Corollary \ref{MborelCor}	to 	derive that there exists a non-trivial $n$-tuple of integers $(j_1,\dots,j_n)$ with $|j_1|+\cdots+|j_n|\le 2m$ and a positive real $c_4$ depending only on $A$ such that 
	\begin{equation*}
		T_{(\frac{g_1}{g_0})^{j_1}\cdots (\frac{g_n}{g_0})^{j_n}}(r)\le_{\exc} \frac{c_4}{\ell} T_{\mathbf{g}}(r).
	\end{equation*}
	Since we may enlarge $N$ whenever necessary,  we can conclude the proof by taking  $\ell\ge N\ge c_4\cdot \epsilon^{-3}. $
\end{proof}

\section{Proofs of  Theorem \ref{main_thm_1}}\label{thm1}

\subsection{Polynomials over a ring with logarithmic differentials}
Let $\mathbf g=(g_0,\hdots,g_n)$, where $g_0,\hdots,g_n$ are non-constant entire functions without common zeros.
Recall that 
$$
\mathbb K_{\bf g}:=\{a: a \text{ is a meromorphic function with }T_{a}(r)\le {\rm O} (T_{\mathbf g}(r))\},
$$ which is a field of the same growth w.r.t. ${\bf g}$.    
We note $a'\in  \mathbb K_{\bf g}$ if $a\in  \mathbb K_{\bf g}$. 
Let $u_i=g_i/g_0$, for $0\le i\le n$, and $\mathbf{u}=(1,u_1,\hdots,u_n)$.
Since $T_{u_i}(r)\le T_{\mathbf g}(r)$,  we see that $u_i$, $u_i'$ and $u_i'/u_i$ are all in $ \mathbb K_{\bf g}$.  

Let  $\mathbf{x}:=(x_{0},\ldots,x_{n})$.  For $\mathbf{i}=(i_{0},\ldots,i_{n})\in\mathbb{Z}^{n}$, we let $\mathbf{x^{i}}:=x_{0}^{i_{0}}\cdots x_{n}^{i_{n}}$ and  $\mathbf{u^{i}}:=u_{0}^{i_0}\cdots u_{n}^{i_{n}}$. For a non-constant polynomial $F(\mathbf{x})=\sum_{\mathbf{i}}a_{\mathbf{i}}\mathbf{x}^{\mathbf{i}}\in   \mathbb K_{\bf g}[\mathbf{x}]:=  \mathbb K_{\bf g}[x_0,\dots,x_n]$,
we define 
\begin{align}\label{Duexpression}
	D_{\mathbf{u}}(F)(\mathbf{x}):=\sum_{\mathbf{i}}\frac{(a_{\mathbf{i}}\mathbf{u}^{\mathbf{i}})'}{\mathbf{u}^{\mathbf{i}}}\mathbf{x}^{\mathbf{i}}
	=\sum_{\mathbf{i}}(  a_{\mathbf{i}}'+a_{\mathbf{i}}  \cdot\sum_{j=1}^{n}i_j\frac{u_{j}'}{u_{j}})\mathbf{x}^{\mathbf{i}}\in\mathbb K_{\mathbf g}[\mathbf{x}].
\end{align} 
A direct computation shows that 
\begin{align}\label{fuvalue}
	F(\mathbf{u})'=D_{\mathbf{u}}(F)(\mathbf{u}),
\end{align}
and that  the following product rule 
\begin{align}\label{productrule}
	D_{\mathbf{u}}(FG)=D_{\mathbf{u}}(F)G+FD_{\mathbf{u}}(G)
\end{align}
holds for  $F,G\in \mathbb K_{\mathbf g}[\mathbf{x}]$.
The following lemma is a modification of   \cite[Lemma 3.1]{GSW20}.  

\begin{lemma}\label{coprime}
	Let $F$ be a nonconstant  homogeneous polynomial in $\mathbb K_{\mathbf g}[\mathbf{x}]$  with no monomial factors and no repeated factors.  Then   $F$   and $D_{\mathbf{u}}(F)$
	are coprime  in  $\mathbb K_{\mathbf g}[\mathbf{x}]$ unless there exists a non-trivial tuple of integers $(m_1,\hdots,m_n)$ with $\sum_{i=1}^n|m_i|\le 2\deg F$  such that 
	$ T_{u_1^{m_1} \cdots u_n^{m_n}}\le T_{F}(r).$
\end{lemma}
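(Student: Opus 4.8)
The strategy is the classical one for proving coprimality of a polynomial and its "derivative" in this differential setting: if $F$ and $D_{\mathbf u}(F)$ share a common factor, then that factor must in fact be (up to scalar) a repeated factor of $F$ — but $F$ has no repeated factors by hypothesis — unless the differential operator $D_{\mathbf u}$ degenerates in a way that is controlled by the growth of the coefficients and of monomials in the $u_i$. Concretely, since $F$ is homogeneous, I would factor $F = c \cdot P_1 \cdots P_k$ over the algebraic closure of $\mathbb K_{\mathbf g}$ (or, more carefully, over $\mathbb K_{\mathbf g}$ itself if one wants to stay inside the field) into distinct irreducible factors $P_j$. Suppose $P \mid \gcd(F, D_{\mathbf u}(F))$ in $\mathbb K_{\mathbf g}[\mathbf x]$ with $P$ irreducible, say $P = P_1$. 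Write $F = P_1 F_1$ with $P_1 \nmid F_1$. Applying the product rule \eqref{productrule}, $D_{\mathbf u}(F) = D_{\mathbf u}(P_1) F_1 + P_1 D_{\mathbf u}(F_1)$, so $P_1 \mid D_{\mathbf u}(F)$ forces $P_1 \mid D_{\mathbf u}(P_1) F_1$, and since $P_1 \nmid F_1$ and $P_1$ is irreducible we get $P_1 \mid D_{\mathbf u}(P_1)$.

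The heart of the argument is then to analyze when an irreducible homogeneous $P \in \mathbb K_{\mathbf g}[\mathbf x]$ (with no monomial factor, inherited from $F$) can divide $D_{\mathbf u}(P)$. Since $D_{\mathbf u}$ does not lower degree, $P \mid D_{\mathbf u}(P)$ with $\deg D_{\mathbf u}(P) \le \deg P$ means $D_{\mathbf u}(P) = \gamma P$ for some $\gamma \in \mathbb K_{\mathbf g}$. Writing $P = \sum_{\mathbf i} a_{\mathbf i} \mathbf x^{\mathbf i}$, the definition \eqref{Duexpression} gives, coefficient by coefficient,
\[
\frac{(a_{\mathbf i}\mathbf u^{\mathbf i})'}{\mathbf u^{\mathbf i}} = \gamma a_{\mathbf i} \quad \text{for every } \mathbf i \text{ with } a_{\mathbf i} \ne 0.
\]
Picking two exponents $\mathbf i, \mathbf j$ in the support of $P$ (there are at least two, since $P$ is non-constant and has no monomial factor, so $P$ is not a scalar times a single monomial), and dividing the two relations, one finds that $\dfrac{(a_{\mathbf i}\mathbf u^{\mathbf i})'}{a_{\mathbf i}\mathbf u^{\mathbf i}} = \dfrac{(a_{\mathbf j}\mathbf u^{\mathbf j})'}{a_{\mathbf j}\mathbf u^{\mathbf j}}$, i.e. the logarithmic derivative of $\dfrac{a_{\mathbf i}\mathbf u^{\mathbf i}}{a_{\mathbf j}\mathbf u^{\mathbf j}}$ vanishes, hence $\dfrac{a_{\mathbf i}\mathbf u^{\mathbf i}}{a_{\mathbf j}\mathbf u^{\mathbf j}} = \beta$ is a nonzero constant. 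Then $\mathbf u^{\mathbf i - \mathbf j} = \beta \, a_{\mathbf j}/a_{\mathbf i}$, and since $a_{\mathbf i}, a_{\mathbf j} \in \mathbb K_{\mathbf g}$ we get $T_{\mathbf u^{\mathbf i - \mathbf j}}(r) \le T_{a_{\mathbf j}/a_{\mathbf i}}(r) + O(1)$. Because $P$ is homogeneous, $\mathbf i$ and $\mathbf j$ have the same total degree, so $\mathbf u^{\mathbf i - \mathbf j} = u_1^{m_1}\cdots u_n^{m_n}$ with $m_k = i_k - j_k$ and $\sum_k |m_k| \le 2\deg P \le 2\deg F$; the tuple $(m_1,\dots,m_n)$ is non-trivial because $\mathbf i \ne \mathbf j$. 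Finally I would bound $T_{a_{\mathbf j}/a_{\mathbf i}}(r)$ by $T_P(r)$ via Proposition \ref{basic_prop} applied to the coefficient tuple of $P$, and note $T_P(r) \le T_F(r)$ since the coefficients of $P$ (a factor of $F$) have growth bounded by that of $F$ — this is where one must be a little careful with the constant, matching the exact form of \cite[Lemma 3.1]{GSW20}; the clean bound $T_{u_1^{m_1}\cdots u_n^{m_n}}(r) \le T_F(r)$ claimed in the statement should follow after absorbing the factorization into the $O(1)$ or by a direct comparison of heights of coefficients of a factor versus the full polynomial.

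The main obstacle, I expect, is the bookkeeping needed to pass from "coefficients of an irreducible factor $P$ of $F$" to a height bound in terms of $T_F(r)$ with exactly the stated constant $1$ (rather than some $c\cdot T_F(r)$), since factoring over $\mathbb K_{\mathbf g}$ and controlling heights of factors requires either a Gelfond-type inequality for heights of polynomial factors or a reduction that avoids factoring altogether (e.g. working directly with $\gcd(F, D_{\mathbf u}(F))$ and a resultant/Wronskian argument). A secondary subtlety is ensuring the common factor can be taken homogeneous with support of size $\ge 2$ and no monomial factor — this is immediate since any factor of a homogeneous polynomial with no monomial factors inherits both properties. I would model the write-up closely on the proof of \cite[Lemma 3.1]{GSW20}, substituting $\mathbb K_{\mathbf g}$ for the small-function field there and using that $a \in \mathbb K_{\mathbf g} \Rightarrow a' \in \mathbb K_{\mathbf g}$, which was already observed above, so that $D_{\mathbf u}$ does preserve $\mathbb K_{\mathbf g}[\mathbf x]$.
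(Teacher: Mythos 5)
Your proposal follows the same route as the paper: factor $F$ into irreducibles over $\mathbb K_{\mathbf g}$, reduce via the product rule \eqref{productrule} to the case where an irreducible factor $P$ divides $D_{\mathbf u}(P)$, conclude that $D_{\mathbf u}(P)/P$ is a scalar in $\mathbb K_{\mathbf g}$, and compare coefficients to force the logarithmic derivatives of $a_{\mathbf i}\mathbf u^{\mathbf i}$ to agree across the (at least two-element) support, so that $\mathbf u^{\mathbf i-\mathbf j}$ is a constant multiple of $a_{\mathbf j}/a_{\mathbf i}$ and its characteristic is bounded by $T_P(r)\le T_F(r)$. This is exactly the paper's argument (cf.\ \eqref{eq_diff1}); your version is in fact slightly more careful in allowing the scalar $\gamma$ to be zero and in flagging the height-of-a-factor bound $T_P(r)\le T_F(r)$, which the paper asserts without comment.
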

\begin{proof}
	Let $F=F_1\cdots F_k$, where $F_i$, $1\le i\le k$, are irreducible (homogeneous) polynomials in $\mathbb K_{\mathbf g}[\mathbf{x}]$.
	By \eqref{productrule}, we have 
	$$
	D_{\mathbf{u}}(F)=D_{\mathbf{u}}(F_1)F_2\cdots F_k+\cdots +F_1\cdots F_{k-1}D_{\mathbf{u}}(F_k).
	$$
	If  $F$   and $D_{\mathbf{u}}(F)$ are not coprime in $ \mathbb K_{\mathbf g}[\mathbf{x}]$, then the irreducible common factors  are among the irreducible polynomials  $F_1,\dots,F_k\in  \mathbb K_{\mathbf g}[\mathbf{x}]$.   Assume that $F_j$, $1\le j\le k$, is a common factor.  Then  $D_{\mathbf{u}}(F_j) $ is divisible by $F_j$.
	Write $F_j=\sum_{\mathbf{i}}b_{\mathbf{i}}\mathbf{x}^{\mathbf{i}}\in  \mathbb K_{\mathbf g}[\mathbf{x}]$, which contains at least two distinct terms, since $F$ has no monomial factors. 
	Then   $D_{\mathbf{u}}(F_j) /F_j$ is a non-zero constant in $\mathbb K_{\bf g}$  since  $D_{\mathbf{u}}(F_j)$ is not zero and $\deg D_{\mathbf{u}}(F_j)=\deg F_j$.
	Comparing the coefficients of $F_j$ and $D_{\mathbf{u}}(F_j)$, for nonzero $b_{\mathbf{i}}$ and $b_{\mathbf{j}}$   in $ \mathbb K_{\bf g} $ ($\mathbf{i}\ne\mathbf{j}$), we have from \eqref{Duexpression} that
	\begin{equation}\label{eq_diff1}
		\frac{(b_{\mathbf{i}}\mathbf{u}^{\mathbf{i}})'}{b_{\mathbf{i}}\mathbf{u}^{\mathbf{i}}}=\frac{(b_{\mathbf{j}}\mathbf{u}^{\mathbf{j}})'}{b_{\mathbf{j}}\mathbf{u}^{\mathbf{j}}},
	\end{equation} 
	where $\mathbf{i}=(i_0,i_1,\hdots,i_n)$, $\mathbf{j}=(j_0,j_1,\hdots,j_n)$ and $\sum_{h=0}^n i_h=\sum_{h=0}^n j_h=\deg F_j$.
	It implies that 
	$\frac {b_{\mathbf{i}}\mathbf{u}^{\mathbf{i}}}{b_{\mathbf{j}}\mathbf{u}^{\mathbf{j}}}\in\mathbb C$.  Therefore,
	$$
	T_{u_1^{i_1-j_1} \cdots u_n^{i_n-j_n}}(r)\le T_{F_j}(r)\le T_{F}(r).
	$$
\end{proof}

\subsection{A General Theorem}
For convenience of discussion, we denote by $\mathcal E$   the collection of entire functions.  For a positive integer $\ell$, we let
\begin{align}\label{Em}
	\mathcal E_{\ell}:=\{g\in\mathcal E\setminus \{0\}\,:\, \text{the zero multiplicity of $g$ at each $z\in \mathbb C$ is at least $\ell$ if $g(z)=0$}\}.
\end{align}
It's clear that $f\cdot g\in \mathcal E_{\ell}$ if $f, g\in \mathcal E_{\ell}$.  

We first show the following.
\begin{theorem}\label{main_thm_1G}
	Let $G$ be a non-constant  homogeneous polynomial   in $\mathbb C[x_0,\hdots,x_n]$ with no monomial factors and no repeated factors.  Assume that the hypersurface  defined by   $G$ in $\mathbb P^n$ and the coordinate hyperplanes are in general position. Let $g_0, g_1,\dots,g_n$ be  entire functions in  $\mathcal E_{\ell}$  with no common zeros.  Let  $\mathbf{g}=(g_0,g_{1},\ldots,g_{n}):\mathbb C\to \mathbb P^{n}$.   
	Then for any   sufficiently small $\epsilon >0$, there exist positive integers   $\ell_1\ge O(\epsilon^{-3})$ and $\ell_2=O(\epsilon^{-1})$  independent of  $\mathbf{g}$
	such that 	 if $\ell\ge\ell_1$, then either there is a non-trivial $n$-tuple $(i_1,\hdots,i_n)$ of integers with  $\sum_{j=1}^n |i_j|\le \ell_2$ such that
	\begin{align}\label{exception0}
		T_{(\frac{g_1}{g_0})^{i_1}\cdots (\frac{g_n}{g_0})^{i_n}}\le_{\rm exc} \epsilon^3  T_{\mathbf{g}}(r), 
	\end{align}
	or the following holds.
	\begin{itemize}
		\item[\rm (i) ]  
		$N_{G(\mathbf{g})}(0,r)-N^{(1)}_{G(\mathbf{g})}(0,r)\le_{\exc} \epsilon T_{\mathbf{g}}(r)$, and 
		\item[\rm (ii) ]  
		$N^{(1)}_{G(\mathbf{g})}(0,r)\ge_{\rm exc}  (\deg  G- \epsilon)\cdot T_{\mathbf{g}}(r)$.
	\end{itemize} 
\end{theorem}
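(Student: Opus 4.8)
The plan is to deduce (i) from the GCD theorem (Theorem~\ref{movinggcd}) applied to the pair $G,\ D_{\mathbf u}(G)$, to deduce (ii) from the truncated second main theorem (Theorem~\ref{SMTmoving}) for the $n+2$ divisors $[G=0],H_0,\dots,H_n$ together with (i), and to channel each degeneracy alternative of these two theorems — and each exceptional alternative of Lemma~\ref{coprime} and Corollary~\ref{MborelCor} — into the conclusion \eqref{exception0}. Throughout set $d=\deg G$, $u_i=g_i/g_0$, $\mathbf u=(1,u_1,\dots,u_n)$, and fix $\epsilon_0:=\epsilon/(2d+1)$. First I would dispose of degenerate cases: if $G(\mathbf g)\equiv 0$ then $G(\mathbf g)=0$, so Corollary~\ref{MborelCor} yields a nontrivial relation $T_{(g_1/g_0)^{j_1}\cdots(g_n/g_0)^{j_n}}(r)\le_{\exc}\frac{c}{\ell}T_{\mathbf g}(r)$ with $\sum|j_i|\le 2d$, which is \eqref{exception0} once $\ell\ge c\epsilon^{-3}$; so assume $G(\mathbf g)\not\equiv 0$. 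Next, Lemma~\ref{coprime} applied to $G\in\mathbb C[\mathbf x]\subseteq\mathbb K_{\mathbf g}[\mathbf x]$ gives either its exceptional monomial relation — which has height $\le T_G(r)=O(1)\le_{\exc}\epsilon^3 T_{\mathbf g}(r)$ and hence is \eqref{exception0} — or the coprimality of $G$ and $D_{\mathbf u}(G)$ in $\mathbb K_{\mathbf g}[\mathbf x]$ (in particular $D_{\mathbf u}(G)\neq 0$); since the $\gcd$ of two polynomials is unchanged under field extension, this gives coprimality over $\mathbb C(u_1'/u_1,\dots,u_n'/u_n)$ as well. As $[G=0]$ and the coordinate hyperplanes are in general position, $G$ vanishes at none of $(1,0,\dots,0),\dots,(0,\dots,0,1)$, so the coprime pair $G,\ D_{\mathbf u}(G)$ of homogeneous polynomials of degree $d>0$ over $\mathbb C(u_1'/u_1,\dots,u_n'/u_n)$ satisfies the hypotheses of Theorem~\ref{movinggcd}.

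For (i), the crux is the pointwise inequality
\[
N_{G(\mathbf g)}(0,r)-N^{(1)}_{G(\mathbf g)}(0,r)\ \le\ N_{\rm gcd}\bigl(G(\mathbf g),\,D_{\mathbf u}(G)(\mathbf g),\,r\bigr).
\]
To establish it I would introduce the \emph{entire} function $\psi:=g_0\,G(\mathbf g)'-d\,g_0'\,G(\mathbf g)$; differentiating $G(\mathbf g)=g_0^{\,d}G(\mathbf u)$ and combining \eqref{fuvalue} with the homogeneity identity $D_{\mathbf u}(G)(\mathbf g)=g_0^{\,d}D_{\mathbf u}(G)(\mathbf u)$ shows $\psi=g_0\,D_{\mathbf u}(G)(\mathbf g)$. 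For a zero $z_0$ of $G(\mathbf g)$ of order $k\ge1$, a short local comparison of orders — treating $g_0(z_0)\ne0$ and $g_0(z_0)=0$ separately — gives $v_{z_0}(\psi)\ge v_{z_0}(g_0)+k-1$, hence $v^+_{z_0}(D_{\mathbf u}(G)(\mathbf g))\ge k-1$, so that the excess $k-1$ is at most $\min\{v^+_{z_0}(G(\mathbf g)),\,v^+_{z_0}(D_{\mathbf u}(G)(\mathbf g))\}$; summing over $z_0$ yields the inequality. Now Theorem~\ref{movinggcd}, applied to $G$ and $D_{\mathbf u}(G)$ with parameter $\epsilon_0$, gives integers $N\ge O(\epsilon_0^{-3})$ and $m=O(\epsilon_0^{-1})$ such that for $\ell>N$ either $N_{\rm gcd}(G(\mathbf g),D_{\mathbf u}(G)(\mathbf g),r)\le_{\exc}\epsilon_0 T_{\mathbf g}(r)$ — which, since $\epsilon_0\le\epsilon$, proves (i) — or \eqref{gdegenerate} holds, which is \eqref{exception0}.

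For (ii) I would apply Theorem~\ref{SMTmoving} to the $n+2$ hypersurfaces $[G=0],H_0,\dots,H_n$, in general position, with parameter $\epsilon_0$, obtaining integers $M'=O(\epsilon_0^{-2})$ and $N'=O(\epsilon_0^{-1})$ such that either the image of $\mathbf g$ lies on a hypersurface $[P=0]$ with $\deg P\le N'$ — whence $P(\mathbf g)\equiv 0$ and Corollary~\ref{MborelCor} again gives \eqref{exception0} — or
\[
(1-\epsilon_0)\,T_{\mathbf g}(r)\ \le_{\exc}\ \frac{1}{d}\,N^{(M')}_{\mathbf g}([G=0],r)+\sum_{i=0}^{n}N^{(M')}_{g_i}(0,r).
\]
Because $g_i\in\mathcal E_\ell$ with $\ell\ge M'$, every zero of $g_i$ has multiplicity $\ge M'$, so $N^{(M')}_{g_i}(0,r)=M'\,N^{(1)}_{g_i}(0,r)\le\frac{M'}{\ell}N_{g_i}(0,r)\le\frac{M'}{\ell}T_{\mathbf g}(r)+O(1)$; combined with $N^{(M')}_{\mathbf g}([G=0],r)\le N_{G(\mathbf g)}(0,r)$ this gives $N_{G(\mathbf g)}(0,r)\ge_{\exc}d\bigl(1-\epsilon_0-\frac{(n+1)M'}{\ell}\bigr)T_{\mathbf g}(r)$, which for $\ell\ge(n+1)M'/\epsilon_0$ is $\ge d(1-2\epsilon_0)T_{\mathbf g}(r)$. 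Together with (i), $N^{(1)}_{G(\mathbf g)}(0,r)\ge\bigl(d-(2d+1)\epsilon_0\bigr)T_{\mathbf g}(r)=(d-\epsilon)T_{\mathbf g}(r)$. Finally one takes $\ell_1$ to be the maximum of all thresholds on $\ell$ used above — each of order $\epsilon^{-3}$, since $\epsilon_0^{-3}=O(\epsilon^{-3})$ — and $\ell_2$ a common bound of order $\epsilon^{-1}$ for the exponent sums $2d,\,2m,\,2N'$ appearing in the various monomial relations; both depend only on $\epsilon$, $n$ and $\deg G$, not on $\mathbf g$.

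The one genuinely delicate step, I expect, is the pointwise estimate in (i): at zeros of $g_0$ the function $D_{\mathbf u}(G)(\mathbf g)$ is only meromorphic, and one must pass from the differentiated form of $G(\mathbf g)=g_0^{\,d}G(\mathbf u)$ to the operator $D_{\mathbf u}$, whose coefficients lie in the field $\mathbb C(u_1'/u_1,\dots,u_n'/u_n)$ demanded by Theorem~\ref{movinggcd}, without any loss — the entire auxiliary function $\psi$ is precisely what reconciles the two. The rest is a direct application of the machinery established above, together with routine bookkeeping of the effective constants.
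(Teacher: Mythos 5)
Your proof is correct and follows essentially the same route as the paper's: apply the GCD theorem (Theorem~\ref{movinggcd}) to the pair $G,\,D_{\mathbf u}(G)$ for assertion (i), apply the moving-target second main theorem (Theorem~\ref{SMTmoving}) together with (i) for assertion (ii), and channel every degeneracy alternative (from Lemma~\ref{coprime}, Corollary~\ref{MborelCor}, and the two cited theorems) into the monomial conclusion~\eqref{exception0}. The one presentational difference is that you package the key local order comparison through the auxiliary entire function $\psi=g_0\,G(\mathbf g)'-d\,g_0'\,G(\mathbf g)=g_0\,D_{\mathbf u}(G)(\mathbf g)$, whereas the paper works directly with the meromorphic identity $G(\mathbf g)'=d\tfrac{g_0'}{g_0}G(\mathbf g)+D_{\mathbf u}(G)(\mathbf g)$ and the observation that $g_0'/g_0$ has at worst simple poles; these are equivalent, and your $\psi$ formulation makes the computation at zeros of $g_0$ marginally more transparent. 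Your up-front choice $\epsilon_0=\epsilon/(2d+1)$ also cleans up the final bookkeeping so that (ii) lands exactly at $d-\epsilon$, while the paper absorbs the analogous factor-of-two slack into the arbitrariness of $\epsilon$.
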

\begin{proof} 
	We first note that if $G(\mathbf{g})=0$, then  we have \eqref{exception0} following Corollary \ref{MborelCor}  if  $\ell\ge\ell_1\ge c\epsilon^{-3}$ for some positive constant $c$.  Therefore, it suffices to consider (i) when $G(\mathbf{g})$ is not identically zero.
	Let $u_i=g_i/g_0$, $0\le i\le n$ and $\mathbf u=(1,u_1,\hdots,u_n)$.  We may assume that each $u_i$, $1\le i\le n$, is not constant, otherwise $T_{u_i}(r)=O(1)$.
	Let $d=\deg G$.
	Then $G(\mathbf{g})=g_0^d G(\mathbf u)$, $D_{\mathbf u}(G)(\mathbf{g})=g_0^d D_{\mathbf u}(G)(\mathbf{u})$  and 
	\begin{align}\label{Dug}
		G(\mathbf{g})'= d g_0'  g_0^{d-1}G(\mathbf{u}) +g_0^dG(\mathbf u)'=d\frac{g_0'}{g_0}G(\mathbf{g}) +D_{\mathbf u}(G)(\mathbf{g}) 
	\end{align}
	by  \eqref{fuvalue}.
	
	Let $z_0\in\mathbb C$.  If $v_{z_0}( G(\mathbf{g}))\ge 2$, 
	it follows from \eqref{Dug} that $v_{z_0}( D_{\mathbf{u}}(G)(\mathbf{g}))\ge v_{z_0}( G(\mathbf{g}))-1$ since $v_{z_0}( G(\mathbf{g})')=v_{z_0}( G(\mathbf{g}))-1$ and $\frac{g_0'}{g_0}$ has only at worst simple poles.
	Hence, 
	$$
	\min\{v_{z_0}^+(G(\mathbf{g})),v_{z_0}^+(D_{\mathbf{u}}(G)(\mathbf{g}))\}\ge v_{z_0}^+( G(\mathbf{g}))-\min\{1,v_{z_0}^+( G(\mathbf{g}))\} 
	$$
	for any $z_0\in\mathbb C$.  Consequently,
	\begin{align}\label{truncate}
		N_{\gcd}(G(\mathbf{g}), D_{\mathbf{u}}(G)(\mathbf{g}),r)\ge N_{G(\mathbf{g})}(0,r)-N^{(1)}_{G(\mathbf{g})}(0,r).
	\end{align}
	On the other hand, by Lemma \ref{coprime}, $G$ and $D_{\mathbf{u}}(G)$ are coprime in $ \mathbb K_{\bf g}[\bf x] :=\mathbb K_{\bf g} [x_0,\hdots,x_n]$ or \eqref{exception0} holds with $\sum_{j=0}^n |i_j|\le 4d$.   Since the coefficients of $G$ are in $\mathbb C$, we have that
	$D_{\mathbf{u}}(G)(\mathbf{g})\in \mathbb C(\frac{u_1'}{u_1},\hdots,\frac{u_n'}{u_n})[\bf x] $ and that  $G$  and $D_{\mathbf{u}}(G)$ are coprime in $\mathbb C(\frac{u_1'}{u_1},\hdots,\frac{u_n'}{u_n})[\bf x] $.
	Furthermore, since $[G=0]$   and the coordinate hyperplanes are in general position, we see that $G(P)\ne 0$ for 
	$P\in \{(1,0,\hdots,0),\hdots,(0,\hdots,0,1)\}$.
	Therefore, we can apply Theorem \ref{movinggcd} to obtain that for any  sufficiently small  $\epsilon>0$,  there exist  integers $\ell_1\ge O(\epsilon^{-3})$ and  $\ell_2=O(\epsilon^{-1})$  such that either \eqref{exception0} holds, or
	\begin{align*} 
		N_{\rm gcd} (G({\mathbf{g}}),D_{\mathbf{u}}(G)(\mathbf{g}),r)\le_{\rm exc}   \epsilon  T_{\bf g}(r).
	\end{align*}
	Then \eqref{truncate} implies that
	\begin{align}\label{truncate7}
		N_{G(\mathbf{g})}(0,r)-N^{(1)}_{G(\mathbf{g})}(0,r) \le_{\exc}   \epsilon  T_{\bf g}(r).
	\end{align}
	This proves the first assertion.

	Since $[G=0]$   and the coordinate hyperplanes are in general position,   it follows from Theorem \ref{SMTmoving} with polynomials $ G $, $x_0,\hdots,x_n$  that there exist positive integers $M\ge O(\epsilon^{-2})$ and $D=O(\epsilon^{-1})$ depending only on $\epsilon$ and $\deg G$ such that either the image of $\mathbf{g}$ is contained in a hypersurface in $\mathbb P^n(\mathbb C)$ of degree bounded by $D$, which again imply \eqref{exception0} by Corollary \ref{MborelCor} (enlarge $\ell_1$ if necessary), or
	\begin{equation*}
		(1-\frac{\epsilon}{2d})T_{\mathbf{g}}(r)\le_{\exc} \sum_{i=0}^n N_{g_i}^{(M)}(0,r)+\frac{1}{d}N_{G(\mathbf{g})}(0,r).
	\end{equation*}
	Hence 
	\begin{equation}\label{applySMT}
		\begin{split}
			N_{G(\mathbf{g})}(0,r)&\ge_{\exc} d(1- \frac{\epsilon}{2d} )  T_{\mathbf{g}}(r) - \frac{dM}{\ell}\sum_{i=0}^n N_{g_i}(0,r)\\
			&\ge (d- \frac{\epsilon}{2})  T_{\mathbf{g}}(r)-\frac{dM(n+1)}{\ell} T_{\mathbf{g}}(r).
		\end{split}
	\end{equation}
	Thus, we obtain the second assertion by taking $\ell_1\ge 2dM(n+1)\epsilon^{-1}$.
\end{proof}

\subsection{Proof of Theorem \ref{main_thm_1}}
The proof of Theorem \ref{main_thm_1} is the combination of Theorem \ref{main_thm_1G} for $n=2$ and the following proposition.

\begin{proposition}\label{n2}
	Let $G\in \mathbb C[x_0,x_1,x_2]$ be a non-constant  homogeneous polynomial   with no monomial factors and no repeated factors. 
	Assume that the plane curve  $[G=0]$  and $H_i=[x_{i}=0]$, $0\le i\le 2$, are in general position.  
	Let $\epsilon$ be  a sufficiently small positive real and $n_{1}, n_{2}$ be integers not both zeros such that $|n_i|\le O(\epsilon^{-1})$.  Then there exists a proper Zariski closed subset $W\subset\mathbb P^2(\mathbb C)$ and an effectively computable positive integer $\ell\ge O(\epsilon^{-3})$ such that for any nonconstant   orbifold entire curve  $\mathbf{g}=(g_0,g_1,g_2):\mathbb C\to (\mathbb P^{2}, \Delta)$, where 
	$\Delta=(1-\frac 1{m_1})H_0+(1-\frac 1{m_2})H_1+(1-\frac 1{m_3})H_{2}$  with $g_0\ne 0$,  $m_1,m_2,m_3\ge \ell$, and
	\begin{align}\label{ht00}
		T_{(\frac{g_1}{g_0})^{n_{1}}  (\frac{g_2}{g_0})^{n_{2}}}(r)\le \epsilon^3T_{\mathbf{g}}(r),
	\end{align}
	we have the following two inequalities  
	\begin{itemize}
		\item[\rm (i) ]  
		$N_{G(\mathbf{g})}(0,r)-N^{(1)}_{G(\mathbf{g})}(0,r)\le_{\exc} \epsilon T_{\mathbf{g}}(r)$, and 
		\item[\rm (ii) ]   
		$N^{(1)}_{G(\mathbf{g})}(0,r)\ge_{\rm exc}  (\deg  G- \epsilon)\cdot T_{\mathbf{g}}(r)$,
	\end{itemize} 
	if the image of $\mathbf{g}$ is not contained in $W$.  Furthermore, the exceptional set $W$ is a finite union of closed subsets of the following types: $[x_1^{n_1}x_2^{n_2}=\beta x_0^{n_1+n_2}]$, if $n_1\ge 0$, and $[x_2^{n_2}x_0^{-n_1-n_2}=\beta  x_1^{-n_1}]$, if $n_1< 0$.
\end{proposition}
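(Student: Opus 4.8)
The plan is to establish the lower bound (ii) first — by an algebraic reduction that transfers the problem to a holomorphic map $\mathbf h:\mathbb C\to\mathbb P^1$ together with finitely many slowly moving targets — and then obtain (i) for free: by the first main theorem $N_{G(\mathbf g)}(0,r)=N_{\mathbf g}([G=0],r)\le \deg G\cdot T_{\mathbf g}(r)+O(1)$, so (ii) gives $N_{G(\mathbf g)}(0,r)-N^{(1)}_{G(\mathbf g)}(0,r)\le_{\exc}\epsilon T_{\mathbf g}(r)+O(1)$, which is (i) after adjusting $\epsilon$. After replacing $(n_1,n_2)$ by $(n_1,n_2)/\gcd(n_1,n_2)$ — which only decreases $|n_i|$ and, since characteristic functions are multiplicative under powers, preserves \eqref{ht00} — we may assume $\gcd(n_1,n_2)=1$.

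Write $u_i=g_i/g_0$ and $\varphi:=u_1^{n_1}u_2^{n_2}$, so \eqref{ht00} reads $T_\varphi(r)\le \epsilon^3 T_{\mathbf g}(r)$. Because $\gcd(n_1,n_2)=1$, for each $t\ne 0,\infty$ the locus $C_t:=\overline{\{u_1^{n_1}u_2^{n_2}=t\}}\subset\mathbb P^2$ is an irreducible rational curve of degree $\delta=\delta(n_1,n_2)=O(\epsilon^{-1})$ carrying an explicit degree-$\delta$ parametrisation $\rho_t:\mathbb P^1\to C_t$ whose coordinates are monomials in one variable with coefficients in $\mathbb C[t]$, and $C_t$ meets the three coordinate lines in exactly two points, which pull back to two fixed points of $\mathbb P^1$. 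Since the image of $\mathbf g$ lies on $C_\varphi$, factor $\mathbf g=\rho_\varphi\circ\mathbf h$ for a holomorphic $\mathbf h=[h_0:h_1]:\mathbb C\to\mathbb P^1$; then $T_{\mathbf g}(r)=\delta\,T_{\mathbf h}(r)+O(\epsilon^3T_{\mathbf g}(r))+O(1)$, and $G(\mathbf g)$ equals $B_\varphi(\mathbf h)$ up to a correcting factor built out of $g_0,h_0,h_1$ and $\varphi$, where $B(X_0,X_1;t)$ is a binary form of degree $\delta\deg G$ with coefficients in $\mathbb C[t]$ (hence, after substituting $t=\varphi$, with characteristic $O(\epsilon^3T_{\mathbf g}(r))$). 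The role of the orbifold hypothesis (which gives $g_i\in\mathcal E_\ell$) together with $g_0\ne 0$ is that the zeros and poles of the correcting factor, the two fixed coordinate-line points of $\mathbf h$, and all $\varphi$-dependent quantities contribute to the relevant (truncated) counting functions only $O(\ell^{-1}T_{\mathbf g}(r))+O(\epsilon^3T_{\mathbf g}(r))$; in particular $N^{(1)}_{G(\mathbf g)}(0,r)$ and $N^{(1)}_{B_\varphi(\mathbf h)}(0,r)$ differ by at most such a quantity.

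Now $\mathbf h$ is necessarily nonconstant (otherwise $T_{\mathbf g}(r)=O(\epsilon^3T_{\mathbf g}(r))+O(1)$, forcing $\mathbf g$ constant), and the discriminant $R(t)$ of $B(X_0,X_1;t)$ in $X_0,X_1$ is a nonzero polynomial — here is precisely where we use that $G$ has no repeated factors and that $[G=0]$ is in general position with the coordinate lines, since then the generic $C_\beta$ meets $[G=0]$ transversally in $\delta\deg G$ distinct points, none on a coordinate line. Let $\mathcal B$ be the finite, effectively computable set consisting of $0$, $\infty$, the roots of $R(t)$, and the finitely many values $\beta$ for which $C_\beta$ passes through one of the finitely many auxiliary points the bookkeeping forces us to avoid. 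Provided $\varphi$ is not identically a member of $\mathcal B$, apply the second main theorem on $\mathbb P^1$ with slowly moving targets and truncation to level one (Yamanoi's theorem, \cite{Yamanoi2004}; when $\varphi$ is constant this is the classical truncated second fundamental theorem) to $\mathbf h$, the $\le\delta\deg G$ moving zeros of $B_\varphi$, and the two fixed coordinate-line points: the fixed points pad the number of targets by $2$ while contributing only $O(\ell^{-1}T_{\mathbf g}(r))$ to the counting side, so one gets
\[
N^{(1)}_{B_\varphi(\mathbf h)}(0,r)\ \ge_{\exc}\ \bigl(\delta\deg G-\epsilon\bigr)T_{\mathbf h}(r)-O\!\bigl(\epsilon^3T_{\mathbf g}(r)\bigr)-O\!\bigl(\ell^{-1}T_{\mathbf g}(r)\bigr).
\]
Translating through $G(\mathbf g)=(\text{correcting factor})\cdot B_\varphi(\mathbf h)$ and $T_{\mathbf h}=\delta^{-1}T_{\mathbf g}+O(\epsilon^3T_{\mathbf g})$, and choosing $\ell\ge O(\epsilon^{-3})$ large enough to absorb the errors, yields (ii), and then (i) as noted above.

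Finally I would set $W:=\bigcup_{\beta\in\mathcal B\setminus\{0,\infty\}}C_\beta$, a finite union of curves of the stated form $[x_1^{n_1}x_2^{n_2}=\beta x_0^{n_1+n_2}]$ when $n_1\ge 0$ (and $[x_2^{n_2}x_0^{-n_1-n_2}=\beta x_1^{-n_1}]$ when $n_1<0$), hence a proper Zariski closed subset of $\mathbb P^2$. If the image of $\mathbf g$ is not contained in $W$, then $\varphi$ cannot be identically equal to any $\beta\in\mathcal B$ — that would place the image inside $C_\beta\subset W$ — and one also checks $G(\mathbf g)\not\equiv 0$ (otherwise the image lies on a component of $[G=0]$ which, combined with $\varphi$ small, is forced to be some $C_\beta\subset W$); so the construction applies and (i), (ii) hold. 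I expect the main obstacle to be the error-term bookkeeping in the reduction: one must verify that every discrepancy created by passing from $\mathbf g$ to $\mathbf h$ — the correcting factor in $G(\mathbf g)$, the isolated points where $\varphi$ meets $\mathcal B$, and the two coordinate-line points — is genuinely of size $O(\ell^{-1}T_{\mathbf g}(r))+O(\epsilon^3T_{\mathbf g}(r))$, which requires the orbifold condition $g_i\in\mathcal E_\ell$ (so $N^{(1)}_{g_i}(0,r)\le\ell^{-1}T_{\mathbf g}(r)+O(1)$) and the smallness \eqref{ht00} of $\varphi$ to be used in tandem; a secondary task is to make $\mathcal B$, and hence $W$, explicit, which reduces to computing the discriminants and resultants attached to $G$ and $(n_1,n_2)$.
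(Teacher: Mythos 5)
Your algebraic reduction is essentially the paper's: after reducing to $\gcd(n_1,n_2)=1$, you set $\varphi=u_1^{n_1}u_2^{n_2}$ (the paper's $\lambda$), parametrise the level curve by a single function (the paper's $\beta=u_1^bu_2^{-a}$, your $\mathbf h$), substitute into $G$ to get a binary form $B$ with coefficients in $\mathbb C[\varphi,\varphi^{-1}]$, and use the discriminant/resultant of $B$ to carve out $W$. The observation that (ii) at truncation level one formally implies (i) via the first main theorem bound $N_{G(\mathbf g)}(0,r)\le \deg G\cdot T_{\mathbf g}(r)+O(1)$ is also correct. The problem is the engine you propose for (ii).

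You invoke Yamanoi's second main theorem on $\mathbb P^1$ with moving targets and truncation level one. That theorem requires the targets to be \emph{small}, i.e. $T_{a_j}(r)={\rm o}(T_{\mathbf h}(r))$. Here the targets are the roots of $B_\varphi$, with characteristic of size $O(T_\varphi(r))$, and \eqref{ht00} only gives $T_\varphi(r)\le\epsilon^3T_{\mathbf g}(r)\asymp\epsilon^3\delta\,T_{\mathbf h}(r)=O(\epsilon^2)T_{\mathbf h}(r)$. A bound by a fixed small multiple of $T_{\mathbf h}$ is not smallness: the targets are of the \emph{same growth} as $\mathbf h$, and no level-one second main theorem is available in that regime --- this is precisely the obstruction the paper's Section 4 is built to circumvent, and even there only a GCD estimate, not a level-one SMT, is obtained. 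To run your argument you would need a quantitative Yamanoi whose error term is an explicit constant times $\max_jT_{a_j}(r)$, uniform in the targets; no such statement exists. This is why the paper proves (i) \emph{first}, by a Nullstellensatz/resultant argument bounding $N_{\gcd}(\tilde B(\beta_0,\beta_1),D_{(1,\beta)}(\tilde B)(\beta_0,\beta_1),r)$ by characteristic functions of elements of $\mathbb C[\lambda,\lambda^{-1},\lambda',\beta'/\beta]$ (all of size $O(\epsilon^3T_{\mathbf g}(r))$ by \eqref{ht00} and the orbifold condition), and then gets (ii) by combining (i) with a lower bound for the \emph{untruncated} $N_{G(\mathbf g)}(0,r)$ coming from Theorem \ref{trunborel} applied at the bounded truncation level $d_B=\deg B$, which costs only $O(d_B/\ell)\,T_{\mathbf g}(r)$. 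Your plan inverts this order and thereby rests the whole proof on a moving-target level-one SMT that is not available. (Secondary issues: the roots of $B_\varphi$ are a priori only algebroid rather than meromorphic, and your $W$ omits the lines $[x_1-\delta_jx_2=0]$ that the paper needs in the case $n_1=-1$, $n_2=1$; but these are minor next to the main gap.)
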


\begin{remark}
	For each pair $(n_1,n_2)$, the proper Zariski closed subset $W$ can be constructed explicitly.
	Moreover,  we have only finitely many choices of $n_1$ and $n_2$ for a given $\epsilon$ when apply Theorem \ref{main_thm_1G}. 
\end{remark}

\begin{proof}[Proof of Theorem \ref{main_thm_1}]
	Let $\epsilon>0$.  We may assume that $\epsilon$ is sufficiently small.  We first note that we only need to consider the nonconstant holomorphic map $\mathbf{g}=(g_0,g_{1}, g_{2}):\mathbb C\to \mathbb P^{2}$ with $g_i$, $0\le i\le 2$, not identically zero by including the coordinate line $[x_i=0]$, $0\le i\le 2$, to $W$.
	By Theorem \ref{main_thm_1G}, we find  positive integers $\ell_1\ge O(\epsilon^{-3})$ and  $\ell_2=O(\epsilon^{-1})$  independent of $\mathbf{g}$  such that for any nonconstant  holomorphic map  $\mathbf{g}=(g_0,g_{1}, g_{2}):\mathbb C\to \mathbb P^{2}$, where 
	$g_0, g_1,g_2\in \mathcal E_{\ell_1}$ with no common zeros, we have either both (i) and (ii) are valid or \eqref{ht00} in Proposition \ref{n2} holds for some integers $n_1$ and $n_2$ whose absolute values are bounded by $\ell_2$.   We then apply Proposition \ref{n2} for the latter situation for each possible pair of $(n_1,n_2)$ to conclude the proof.
\end{proof}	

\subsection{Proof of Proposition \ref{n2}}
To prove Proposition \ref{n2}, we need the following version of Hilbert Nullstellensatz reformulated from \cite[Chapter~XI]{waerden1967} or \cite[Chapter~IX, Lemma 3.7]{langalgebra}. 
\begin{proposition} \label{HilbertN}
	Let $A$ be a ring.  Let $\{Q_i\}_{i=1}^{n+1}$ be a set of homogeneous polynomials in $ A [x_0,\dots,x_n]$ such that their zero locus are in  general position.   Then there exists a positive integer $s$, $R\in A\setminus\{0\} $ and  $P_{ji}\in A[x_0,\dots,x_n]$, $1\le i,j\le n+1$,  such that 
	$$
	x_j^s\cdot R=\sum_{i=1}^{n+1} P_{ji} Q_i
	$$
	for each $0\le j\le n.$
\end{proposition}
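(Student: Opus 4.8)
The plan is to derive this from the projective form of Hilbert's Nullstellensatz over a field, together with a clearing-of-denominators step. We may assume that $A$ is an integral domain, which is the only situation occurring in our applications; let $K=\operatorname{Frac}(A)$ and fix an algebraic closure $\bar K$. The general-position hypothesis on the $n+1$ divisors $[Q_i=0]\subset\PP^n_{\bar K}$, $1\le i\le n+1$, forces their total intersection $\bigcap_{i=1}^{n+1}[Q_i=0]$ to be empty in $\PP^n(\bar K)$; equivalently, the affine cone $\{Q_1=\dots=Q_{n+1}=0\}\subset\mathbb A^{n+1}_{\bar K}$ is either $\{0\}$ or $\varnothing$. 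By the affine Nullstellensatz, the radical of the homogeneous ideal $(Q_1,\dots,Q_{n+1})$ in $\bar K[x_0,\dots,x_n]$ therefore contains the irrelevant ideal $(x_0,\dots,x_n)$, so there is a positive integer $s$ with $x_j^s\in(Q_1,\dots,Q_{n+1})_{\bar K[x]}$ for every $0\le j\le n$.

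Next I would descend to $K$. Since the $Q_i$ lie in $K[x_0,\dots,x_n]$ and $\bar K[x_0,\dots,x_n]$ is faithfully flat over $K[x_0,\dots,x_n]$, the extension-and-contraction of $(Q_1,\dots,Q_{n+1})_{K[x]}$ equals itself; hence $x_j^s\in(Q_1,\dots,Q_{n+1})_{K[x]}$ already. Enlarging $s$ so that $s\ge\max_i\deg Q_i$ (replace the relation for $x_j^s$ by the one obtained after multiplying through by a power of $x_j$) and passing to degree-$s$ homogeneous components, I may write, for each $0\le j\le n$,
\begin{equation*}
	x_j^s=\sum_{i=1}^{n+1}\widetilde P_{ji}\,Q_i,\qquad \widetilde P_{ji}\in K[x_0,\dots,x_n]\ \text{homogeneous of degree } s-\deg Q_i.
\end{equation*}
Only finitely many coefficients of the $\widetilde P_{ji}$ occur, all of them in $K=\operatorname{Frac}(A)$; choosing a common denominator $R\in A\setminus\{0\}$ and setting $P_{ji}:=R\,\widetilde P_{ji}\in A[x_0,\dots,x_n]$, multiplication by $R$ yields $x_j^s\cdot R=\sum_{i=1}^{n+1}P_{ji}\,Q_i$ in $A[x_0,\dots,x_n]$ for every $0\le j\le n$, as required.

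This argument involves no serious obstacle: the content is classical, and the only points needing care are that \emph{general position} of $n+1$ divisors in $\PP^n$ produces the empty common zero locus (so that the Nullstellensatz applies in the ``irrelevant ideal lies in the radical'' form), and the faithfully flat descent plus clearing of denominators used to pass from $\bar K$ through $K$ to the ring $A$ --- which simultaneously makes $s$ and $R$ uniform in $j$. If an intrinsically effective statement is preferred, one may instead take $R=\res(Q_1,\dots,Q_{n+1})$, which is a nonzero element of $A$ precisely because the $Q_i$ have no common projective zero over $\bar K$, and invoke the classical identity expressing $\res(Q_1,\dots,Q_{n+1})\cdot x_j^s$ as an $A[x_0,\dots,x_n]$-combination of $Q_1,\dots,Q_{n+1}$; this is essentially the form given in \cite[Chapter~XI]{waerden1967}.
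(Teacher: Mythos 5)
The paper does not actually prove this proposition; it cites \cite[Chapter~XI]{waerden1967} and \cite[Chapter~IX, Lemma~3.7]{langalgebra}, which treat the (Macaulay) resultant of $n+1$ homogeneous forms in $n+1$ variables. Your argument is a correct, self-contained alternative derivation. The reduction to the case where $A$ is an integral domain is the right move: the ``general position'' hypothesis is tacitly a statement about zero loci over $\bar K=\overline{\operatorname{Frac}(A)}$, so $A$ must be a domain for the hypothesis to parse, and in the paper's application $A=\mathbb C[\lambda,\lambda^{-1},\lambda',\beta'/\beta]$ is one. From there each step is sound: the projective common zero locus is empty, so by the Nullstellensatz over $\bar K$ the irrelevant ideal lies in the radical of $(Q_1,\dots,Q_{n+1})$; faithfully flat descent brings the membership $x_j^s\in(Q_1,\dots,Q_{n+1})$ down to $K[x]$; and clearing denominators produces a single $R\in A\setminus\{0\}$ and $P_{ji}\in A[x]$ working for all $j$ at once, since only finitely many coefficients are involved.

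Your closing remark that one may instead take $R=\res(Q_1,\dots,Q_{n+1})$ is in fact the route the paper's citations take, and it is the sharper one: the multivariate resultant is an integer polynomial in the coefficients of the $Q_i$, hence lies in $A$ without any descent or denominator-clearing; it is nonzero precisely because the $Q_i$ have no common zero in $\PP^n(\bar K)$; and the classical identity $\res(Q_1,\dots,Q_{n+1})\cdot x_j^s=\sum_i P_{ji}Q_i$ comes with $P_{ji}$ having coefficients in $\ZZ[\text{coeffs of the }Q_i]\subset A$. Both routes are valid; the Nullstellensatz-plus-descent argument is more elementary, while the resultant argument is effective and is what the cited sources actually give.
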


\begin{proof}[Proof of Proposition \ref{n2}]
	Suppose that  \eqref{ht00} holds.   Let $u_1=g_1/g_0$ and $u_2=g_2/g_0$.   We may assume that  $n_1$ and $n_2$ are coprime and let $u_{1}^{n_{1}}  u_{2}^{n_{2}}=\lambda$.  Then there exist integers $a$ and $b$ such that $n_1a+n_2b=1$ and 
	\begin{align}\label{Tlambda}
		T_{\lambda}(r)\le \epsilon^3  T_{\mathbf{g}}(r).
	\end{align}
	Since $T_{\lambda}(r)=T_{\lambda^{-1}}(r)$, we may exchange the sign of $n_1$ and $n_2$ simultaneously.
	Moreover, we can also rearrange the indices  of $u_1$ and $u_2$.  
	Therefore, we may assume that 
	$n_2\ge  n_1\ge 0$ if $n_1n_2\ge 0$   and $0< n_2\le -n_1$ if $n_1n_2<0$.    It's clear that $n_2>0$ in this setting as $(n_1,n_2)\ne (0,0)$.
	For the choice of $a$ and $b$, we note that if $n_1=0$, then $n_2=1$ and we can simply take $a=0$ and $b=1$.  If $n_1\ne 0$,  
	we can assume that $0< b \le |n_1|$ since $n_1(a+kn_2)+n_2(b-kn_1)=1$ for any integer $k$.  Then we have $|a|<n_2$.  We can further verify that $a<b$ for all cases.
	
	Let $\beta=u_1^bu_2^{-a}$.  We may write
	\begin{align}\label{u12b}
		u_1=\lambda^a \beta^{n_2}\qquad\text{and}\qquad  u_2=\lambda^b\beta^{-n_1}.
	\end{align}
	Then $T_{\mathbf{g}}(r)=T_{[1:u_1:u_2]}(r)= T_{[1:\lambda^a \beta^{n_2}:\lambda^b\beta^{-n_1}]}(r).$
	As we have set $n_2>0$, it suffices to consider for $n_1\ge 0$ and $n_1<0$.
	From the definition of the characteristic function and that $|a|+|b|<|n_1|+|n_2|$, we have 
	\begin{align}\label{Tgbeta1}
		T_{\mathbf{g}}(r) = T_{[1:\lambda^a \beta^{n_2}:\lambda^b\beta^{-n_1}]}(r) \le \max\{|n_1|,|n_2|\} T_{\beta} (r)+(|n_1|+|n_2|) T_{\lambda}(r)  
	\end{align}
	if  $n_1< 0$;
	and 
	\begin{align}\label{Tgbeta2}
		T_{\mathbf{g}}(r) = T_{[\lambda^{-b}\beta^{n_1}:\lambda^{a-b} \beta^{n_1+n_2}:1]}(r) \le ( n_1+n_2 ) T_{\beta} (r)+( n_1 + n_2 ) T_{\lambda}(r)  
	\end{align}
	if   $n_1\ge0$.
	Let 
	\begin{align}\label{LamG}
		\Lambda=X^{n_{1}}  Y^{n_{2}}\qquad\text{and}\qquad  T=X^bY^{-a}
	\end{align} 
	be two variables.  Then
	\begin{align}\label{XY}
		X=\Lambda^a T^{n_2}\qquad\text{and}\qquad  Y=\Lambda^bT^{-n_1}.
	\end{align} 
	Let $G_1(X,Y)=G(1,X,Y)$.
	Let $B_{\Lambda}(T)  \in \mathbb C[\Lambda,\Lambda^{-1}][T]$  be the polynomial such that $B_{\Lambda}(0)\ne 0$ and 
	\begin{align}\label{GB0}
		G_1(X,Y)=G_1(\Lambda^a T^{n_2},\Lambda^b T^{-n_1})= T^{M_1}B_{\Lambda}(T) 
	\end{align}
	for some integer $M_1$.
	Let $B(\Lambda,T)\in \mathbb C[\Lambda,T]$ be the polynomial such that   $B(0,T)\ne 0$ and
\begin{align}\label{GB1}
	B(\Lambda,T)=\Lambda^{M_2} B_{\Lambda}(T) 
	\end{align}
	for some integer $M_2$.	
Then 
	\begin{align}\label{GB}
		G_1(X,Y)=G_1(\Lambda^a T^{n_2},\Lambda^b T^{-n_1})=T^{M_1}B_{\Lambda}(T) =T^{M_1}\Lambda^{M_2}B(\Lambda,T),
	\end{align}
	 and  $B(\Lambda,0)\in \mathbb C[\Lambda]$ is not identically zero.  Therefore, there are at most finite	$ \gamma_1,\dots,\gamma_s\in   \mathbb{C} \text{  such that } B(\gamma_i,0)=0 \text{ for }1\le i\le s.$

	We note that  $B(\Lambda,T)$ cannot be constant as $G$ has no monomial factors.
	We now claim that $B(\Lambda,T)\in \mathbb C[\Lambda,T]$ is square free.
	We first rewrite \eqref{GB} as  
	$$
	G_1(X,Y)=X^{bM_1+n_1M_2}Y^{-aM_1+n_2M_2}B(X^{n_{1}}  Y^{n_{2}},X^bY^{-a}).
	$$
	If  $B_0(\Lambda,T)$  is an irreducible factor of  $B(\Lambda,T)$   in $ \mathbb C[\Lambda,T]$, then $B_0(X^{n_{1}}  Y^{n_{2}},X^bY^{-a})= X^{\ell_1}Y^{\ell_2}H(X,Y)$, where $H(X,Y)\in \mathbb C[X,Y]$,  $H(X,0)\ne 0$, $H(0,Y)\ne 0$. 
	If  $H(X,Y) $ is a constant $\alpha$, then $B_0(\Lambda,T)=B_0(X^{n_{1}}  Y^{n_{2}},X^bY^{-a})= \alpha X^{\ell_1}Y^{\ell_2}$.   We may express $B_0(\Lambda,T)=\sum_{\mathbf i} a_{(i_1,i_2)} \Lambda^{i_1} T^{i_2}$.  Then the above equation implies that $n_1i_1+bi_2=\ell_1$ and $n_2i_1-ai_2=\ell_2$ and hence $(i_1,i_2)=(a\ell_1+b\ell_2, n_2\ell_1-n_1\ell_2)$ if $a_{(i_1,i_2)}\ne 0.$  This implies that   $B_0(\Lambda,T)$ is a monomial, which is not possible.
Therefore, $\deg H\ge 1$.   Then $H$ is not a monomial and it is a non-constant factor of $G$. Since  $G$ is square-free, we conclude that $B_0^2(\Lambda,T)$ is not a factor of $B(\Lambda,T)$. 
	In conclusion,   $B(\Lambda,T)\in \mathbb C[\Lambda,T]$  is square free.
		
	Since $B(\Lambda,T)$ is square free,   the resultant $R(B_{\Lambda},B_{\Lambda}')$ of $B_{\Lambda}$ and $B_{\Lambda}'(T)$ is a Laurent polynomial in  $\mathbb C[\Lambda,\Lambda^{-1}]$, not identically zero.
	Let 
	\begin{align}\label{zeroresultant}
		\text{$\alpha_i$, $1\le i\le t$, be the zeros of  the resultant $R(B_{\Lambda},B_{\Lambda}')$.}
	\end{align}  It is clear that  $\alpha_i\in\mathbb C$.
	Let $B(T):=B_{\lambda}(T)\in \mathbb C[\lambda,\lambda^{-1}][T]$, the specialization of  $B_{\Lambda}(T)$ at $\Lambda=\lambda$.  Then $B(T)$  has no multiple factors in $\mathbb C[\lambda,\lambda^{-1}][T]$ if $\lambda\ne\alpha_i$ for any $1\le i\le t$.

	From now on, we assume that $\lambda\ne\alpha_i$ for any $1\le i\le t$  and $\lambda\ne\gamma_j$ for any $1\le j\le s$. 
	Let $\tilde B\in \mathbb C(\lambda )[Z,U]$ be the homogenization of $B$, i.e. $\tilde B(1,T)=B(T)$.
	Let $D_{(1,\beta)}(\tilde B)   \in \mathbb C(\lambda,\lambda', \frac {\beta'}{\beta})[Z,U]$ be as defined in \eqref{Duexpression} with ${\mathbf u}=(1,\beta)$. 
	By Lemma \ref{coprime}, $\tilde B$ and $D_{(1,\beta)}(\tilde B)$ are coprime homogeneous polynomials in  $\mathbb C(\lambda,\lambda', \frac {\beta'}{\beta})[Z,U]$ unless  there exists a non-zero integer $k$ such that 
	$T_{\beta^k}(r)\le T_{ B}(r)$, which implies    
	\begin{align}\label{Bbeta0}
		T_{\beta}(r)\le T_{  B}(r)\le (|a|+|b|)\deg G \cdot T_{\lambda}(r) \le(|n_1|+|n_2|)\deg G \cdot T_{\lambda}(r).
	\end{align}
	Together with \eqref{Tgbeta1} and \eqref{Tgbeta2}, we have 
	\begin{align}\label{Tgbeta3}
		T_{\mathbf{g}}(r)  &\le (|n_1|+|n_2|) ( T_{\beta}(r)+T_{\lambda}(r) )\cr
		&\le (|n_1|+|n_2|)((|n_1|+|n_2|)\deg G+1) T_{\lambda}(r)  \cr
		&\le 2(|n_1|+|n_2|)^2\deg G  \cdot \epsilon^3 T_{\mathbf{g}}(r) 
	\end{align}
	by  \eqref{Tlambda}.
	This is not possible since $|n_1|+|n_2|\le O(\epsilon^{-1}).$ 
	Therefore, we will assume that $\tilde B$ and $D_{(1,\beta)}(\tilde B)$ are coprime homogeneous polynomials in  $\mathbb C(\lambda,\lambda',\frac {\beta'}{\beta})[Z,U]$.

	Let $\beta=\beta_1/\beta_0$, where $\beta_0$ and $\beta_1$ are entire functions without common zeros.  Moreover, we have $a<b$ and $b>0$ in our setting.  
	Then by \eqref{fuvalue} we have 
	\begin{align}\label{Dbeta}
		\tilde B(\beta_0,\beta_1)'=(\beta_0^{\deg B} B(\beta) )'=\deg B\cdot\frac{\beta_0'}{\beta_0} \tilde B(\beta_0,\beta_1)+     D_{(1,\beta)}(\tilde B)(\beta_0,\beta_1);
	\end{align}
	and   by \eqref{GB} we have
	\begin{equation}\label{gtildeB1}
		G(\mathbf{g}):=G(g_0,g_1,g_2)=g_0^dG_1(u_1,u_2)=g_0^d\beta^{M_1} B(\beta)=g_0^d\beta^{M_1}\beta_0^{-\deg B}\tilde B(\beta_0,\beta_1).
	\end{equation}
	
	Next, we  prove the following.

	\noindent{\bf Claim}. There exists a proper Zariski closed set $W_1$ of $\mathbb P^2(\mathbb C)$, independent of $\mathbf{g}$ such that
	\begin{align}\label{claim}
		N_{G(\mathbf{g})}(0,r)-N_{G(\mathbf{g})}^{(1)}(0,r)\le_{\exc} N_{\gcd} (\tilde B(\beta_0,\beta_1), D_{(1,\beta)}(\tilde B)(\beta_0,\beta_1),r),\end{align}
	if the image of $\mathbf{g}$ is not contained in $W_1$.
	
	The condition that $[G=0]$ is in general position with the coordinate hyperplanes of $\mathbb P^2$  implies that  $G_1(X,Y):=G(1,X,Y)$ can be expanded as 
	\begin{align}\label{expandG}
		G_1(X,Y)=a_0+a_1X^d+a_2Y^d+\cdots \in \mathbb C[X,Y],
	\end{align}
	where $a_i\ne 0$, $0\le i\le 2$.   Then  
	\begin{align}\label{GB2}
		G_1(\lambda^a T^{n_2},\lambda^b T^{-n_1})=a_0+a_1\lambda^{ad} T^{n_2d}+a_2\lambda^{bd} T^{-n_1d}+\cdots.
	\end{align} 
	
	We note that if  $G(\mathbf{g}(z_0))=0$ and $g_0(z_0)=0$, then $g_i(z_0)\ne 0$ for $z_0\in\mathbb C$, and $i=1,2$.
	This can be seen easily. For example, if $G(\mathbf{g}(z_0))=g_0(z_0)=g_1(z_0)=0$, then $g_2(z_0)\ne 0$ and $G(0,0,1)=0$,  which contradicts the assumption that  $[G=0]$ and the coordinate hyperplanes of $\mathbb P^2$ are in general position.
	
	We first consider when  $n_1<0$ and $n_2\ne-n_1$.    Then $ B(T)=G_1(\lambda^a T^{n_2},\lambda^b T^{-n_1})$, which   is a polynomial in $\mathbb C[\lambda,\lambda^{-1}][T]$ of degree $d\cdot\max\{ -n_1,n_2\}$. 
	By \eqref{gtildeB1}, we have
	\begin{equation}\label{gtildeB0}
		G(\mathbf{g}) =g_0^d B(\beta)=g_0^d \beta_0^{-\deg B}\tilde B(\beta_0,\beta_1).
	\end{equation}
	To show \eqref{claim}, it suffices to consider when $v_{z_0}( G(\mathbf{g}))\ge 2$.  
	In this case, we have 
	$$
	v_{z_0}(D_{(1,\beta)}(\tilde B)(\beta_0,\beta_1))\ge v_{z_0}(\tilde B(\beta_0,\beta_1))-1\ge v_{z_0}( G(\mathbf{g}))-1
	$$
	by \eqref{Dbeta}; and
	$$
	v_{z_0}(G(\mathbf{g}) )\le v_{z_0}(\tilde B(\beta_0,\beta_1))+v_{z_0}^+(g_0^d \beta_0^{-\deg B})
	$$
	by \eqref{gtildeB0}.
	We will need to show that  $v_{z_0}^+(g_0^d \beta_0^{-\deg B})=0$ if $v_{z_0}( G(\mathbf{g}))\ge 2$.
	This is clear if $v_{z_0}(g_0)=0$.  Therefore, we assume in addition that $v_{z_0}(g_0)>0$.  
	Then  $v_{z_0}(g_1)=v_{z_0}(g_2)=0$ in this case as noted before.  Consequently, $v_{z_0}(\beta_0)=(b-a)v_{z_0}(g_0) \ge v_{z_0}(g_0)$,
	since $\beta=u_1^bu_2^{-a}=g_1^bg_2^{-a}g_0^{a-b}$ and $a<b$. 
	As  $\deg B=d\cdot\max\{ -n_1,n_2\} \ge d$, we conclude that $v_{z_0}^+ (g_0^d \beta_0^{-\deg B})=0$.
	This shows our claim  \eqref{claim} for this case.
	
	Next, we consider when  $n_1<0$ and $n_2= -n_1$.  Since we assume that $n_1$ and $n_2$ are coprime, we have $n_1=-1$ and $n_2=1$ and  $u_{1}^{-1}  u_{2} =\lambda$.  Therefore, we can simply consider the pair
	$(u_1,\lambda u_1)=(u_1,u_2)$, i.e. taking $a=0$, $b=1$ and $\beta=u_1$.  Consider the following expansion of $G$
	\begin{align}\label{expandG2}
		G_1(X,Y)=\tilde G_d(X,Y)+\cdots +\tilde G_1(X,Y)+a_0 \in \mathbb C[X,Y],
	\end{align}
	where $G_i(X,Y)$ is a homogeneous polynomial of degree $i$ in $\mathbb C[X,Y]$ and $a_0\ne 0$.
	Expand 
	\begin{align}\label{alpha}
		\tilde G_d(X,Y)=(X-\delta_1Y)\cdots (X-\delta_dY).
	\end{align}  Then 
	$G_d(u_1,\lambda u_1)\ne 0$ and $B(T)=G_1(T,\lambda T)$ is a polynomial of degree $d$ with $B(0)=a_0\ne 0$  if $\lambda\ne \delta_i$, $1\le i\le d$.  Then
	\begin{equation}\label{gtildeB}
		G(\mathbf{g}) =g_0^dG _1(u_1,u_2)=g_0^d B(\beta)=g_0^d \beta_0^{-d} \tilde B(\beta_0,\beta_1).
	\end{equation}
	Therefore the claim of \eqref{claim} holds similarly in this case.

	Finally, it remains to consider $n_1\ge 0$.  Then it follows from \eqref{GB2} that $G_1(\lambda^a T^{n_2},\lambda^b T^{-n_1})=T^{-n_1d}\cdot  B(T)$, where $ B(T)$ is a polynomial of degree $(n_1+n_2)d$ with $ B(0)\ne 0$.  Therefore,
	\begin{equation}\label{gtildeB2}
		G(\mathbf{g}) =  g_0^d\beta^{-n_1d} B(\beta)=g_0^d\beta^{-n_1d}\beta_0^{-(n_1+n_2)d}\tilde B(\beta_0,\beta_1)=g_0^d \beta_0^{- n_2 d}\beta_1^{- n_1 d}\tilde B(\beta_0,\beta_1).
	\end{equation}
	Similar to the previous arguments, it suffices to show that $v_{z_0}^+(g_0^d \beta_0^{- n_2 d}\beta_1^{- n_1 d})=0$ if $v_{z_0}( G(\mathbf{g}))\ge 2$ and $v_{z_0}(g_0)>0$.  This can be done as $v_{z_0}(g_1)=v_{z_0}(g_2)=0$ and  $v_{z_0}(\beta_0)=(b-a)v_{z_0}(g_0) \ge v_{z_0}(g_0)$.

	By \eqref{claim}, to complete the proof of (i), 
	it remains to show the following:
	\begin{equation}\label{gcd_GB}
		N_{\gcd} (\tilde B(\beta_0,\beta_1), D_{(1,\beta)}(\tilde B)(\beta_0,\beta_1),r)\le_{\exc} \epsilon T_{\mathbf g}(r). 
	\end{equation}
	Let $A:=\mathbb C[\lambda,\lambda^{-1}, \lambda',\frac {\beta'}{\beta}]$.
	Since $\tilde B$ and $D_{(1,\beta)}(\tilde B)$ are coprime homogeneous polynomials in  $A[Z,U]$, we may apply  Proposition \ref{HilbertN} to find an integer $s$, $R\in A\setminus\{0\} $ and  $F_1,F_2, P_1,P_2\in  A[Z,U]$ such that 
	\begin{align}\label{resul}
		Z^s\cdot R=F_1\tilde B+F_2D_{(1,\beta)}(\tilde B) \quad\text{and }\quad   U^s\cdot R=P_1\tilde B+P_2D_{(1,\beta)}(\tilde B). 
	\end{align}
	As the coefficients of $F_1,F_2, P_1,P_2$ are in $A=\mathbb C[\lambda,\lambda^{-1},\lambda',\frac {\beta'}{\beta}]$,
	by multiplying  some entire functions $\eta_1$ and $\eta_2  $ on the both sides of the equations of   \eqref{resul},  we may assume that the coefficients of $F_i$ and  $P_i$ are entire functions.  Then by evaluating \eqref{resul} at $(\beta_0,\beta_1)$,  we have
	\begin{align}\label{resulevaluating }
		\beta_0^s\cdot R\eta_1 &=F_1(\beta_0,\beta_1)  \tilde B(\beta_0,\beta_1)+F_2(\beta_0,\beta_1) D_{(1,\beta)}(\tilde B) (\beta_0,\beta_1),\cr
		\beta_1^s\cdot R\eta_2&=P_1(\beta_0,\beta_1) \tilde B(\beta_0,\beta_1)+P_2(\beta_0,\beta_1) D_{(1,\beta)}(\tilde B) (\beta_0,\beta_1). 
	\end{align}
	Since $\beta_0$ and $\beta_1$ have no common zeros, we see that 
	\begin{equation}\label{min_ord}
		\min\{v_z^+ (\tilde B(\beta_0,\beta_1)), v_z^+(D_{(1,\beta)}(\tilde B) (\beta_0,\beta_1))\} \le v_z^+(R) +v_z^+(\eta_1)+v_z^+(\eta_2)
	\end{equation}
	for each $z\in\CC$. 
	Therefore,
	\begin{equation}\label{gcd_GB3}
		N_{\gcd} (\tilde B(\beta_0,\beta_1), D_{(1,\beta)}(\tilde B)(\beta_0,\beta_1),r)\le N_R(0,r)+  N_{\eta_1 }(0,r )+N_{\eta_2 }(0,r ).
	\end{equation}
	
	Since $\beta=u_1^bu_2^{-a}=g_1^bg_2^{-a}g_0^{a-b}$,
	we have $\frac {\beta'}{\beta}=b\frac {g_1'}{g_1}-a\frac {g_2'}{g_2}+(a-b)\frac {g_0'}{g_0}$.  Hence
	$$
	T_{\frac {\beta'}{\beta}}(r)\le \sum_{i=0}^2 T_{\frac{g_i'}{g_i}}(r)+O(1)\le_{\exc} \frac{3}{\ell}  T_{\mathbf g}(r) +O(1) \le \epsilon^3   T_{\mathbf g}(r)
	$$
	by Proposition \ref{coutingzero} and taking $\ell>  3   \epsilon^{-3}$.
	We also note that  
	$T_{\lambda}(r)\le \epsilon^3  T_{\mathbf{g}}(r)$ and $T_{\lambda'}(r)\le \epsilon^3  T_{\mathbf{g}}(r)$.
	Therefore,  for any $\alpha\in A$, we have $T_{\alpha}(r)\le c_{\alpha}\epsilon^3  T_{\mathbf{g}}(r)$, where $c_{\alpha}$ is a positive constant independent of $\epsilon$  if $\ell>  3   \epsilon^{-3}$.
	Since  $\eta_1$ and $\eta_2$ are chosen such that  the coefficients of $\eta_1 F_1,\eta_1 F_2, \eta_2 P_1,\eta_2 P_2$ are entire functions, we may assume that $N_{\eta_i}(0,r )\le N_{Q_i}(\infty,r )$, $i=1,2$, for some $Q_i\in A=\mathbb C[\lambda,\lambda^{-1}, \lambda',\frac {\beta'}{\beta}]$.
	Then for $i=1,2$, we have
	\begin{align}\label{eta0}
		N_{\eta_i}(0,r )\le  N_{Q_i}(\infty,r )\le T_{Q_i}(r) \le c_1   \epsilon^3  T_{\mathbf g}(r) 
	\end{align}
	for some constant $c_1$ independent of $\epsilon$.
	Then we derive from \eqref{gcd_GB3} that
	\begin{equation*} 
		N_{\gcd} (\tilde B(\beta_0,\beta_1), D_{(1,\beta)}(\tilde B)(\beta_0,\beta_1),r)\le_{\exc} c_2 \epsilon^3  T_{\mathbf g}(r)
	\end{equation*}
	for some constant $c_2$.  Since $c_2$ is independent of $\epsilon$ and $\epsilon$ is sufficiently small, we can now conclude (i).

	To prove (ii), we first express $B(T)=\sum_{i\in I_B} b_i(\lambda) T^{i}\in \mathbb C[\lambda,\lambda^{-1}][T]$, where $b_i\ne 0$ if $i\in I_B$.
	Then 
	\begin{align}\label{expressB}
		\tilde B(\beta_0,\beta_1)- \sum_{i\in I_B}  b_i(\lambda)  \beta_0^{d_B-i}\beta_1^{i}=0,
	\end{align}
	where $d_B:=\deg B$.   If some proper sub-sum of \eqref{expressB} vanishes, then we have 
	$\sum_{i\in I_C}  b_i(\lambda)  \beta^{ i}=0,$
	for some index subset $ I_C$ of $I_B$.  
	Let $k$ be the largest integer in $I_C$.  Then $\beta^k+\sum_{i \in I_C\setminus \{k\}}  b_i(\lambda) b_k^{-1}(\lambda) \beta^i=0$. 
	Hence, by \cite[Theorem~A3.1.6]{ru2021nevanlinna} and Proposition \ref{basic_prop}, 
	$$
	T_{\beta}(r)\le \sum_{i\in I_C\setminus \{k\}}  T_{\frac{b_i(\lambda)}{ b_k (\lambda)}}(r)+O(1)\le k  T_B(r)+O(1) \le d_B T_B(r)+O(1),
	$$
	which leads to a contradiction by the same arguments for \eqref{Tgbeta3}.  Therefore, we can assume that no  proper sub-sum of \eqref{expressB} vanishes.
	Therefore, we may apply Theorem \ref{trunborel}  to \eqref{expressB} by noting that $ d_B $ and $0$ are in $I_B$ to obtained the following 
	\begin{align}\label{countingB}
		N^{(d_B)}_{ \tilde B(\beta_0,\beta_1)}(0,r)+ \sum_{i\in I_B}  N^{(d_B)}_{\beta_0^{d_B-i}\beta_1^{i}}(0,r)&\ge_{\exc}  d_B T_{\beta}(r)- \sum_{i\in I_B}N_{b_i(\lambda)}(0,r)\cr
		&\ge d_B T_{\beta}(r)-  c_3d_B^2 T_{\lambda}(r),
	\end{align}
	for some positive constant $c_3$, independent of $\epsilon$.   
	On the other hand, since the zeros of $\beta_0$ and $\beta_1$ come from the zeros of $g_i$, $0\le i\le 2$, for any nonnegative integers $A$ and $B$ and a positive integer $n$, we have 
	\begin{align}\label{beta01}
		N^{(n)}_{\beta_0^A\beta_1^B}(0,r) \le \sum_{i=0}^2 N^{(n)}_{g_i}(0,r)\le \frac{n}{\ell}\sum_{i=0}^2 N_{g_i}(0,r)\le \frac{3n}{\ell}T_{\mathbf{g}}(r).
	\end{align}
	Then by \eqref{countingB} and \eqref{beta01}, we have 
	\begin{align}\label{beta02}
		N^{(d_B)}_{ \tilde B(\beta_0,\beta_1)}(0,r)\ge_{\exc} d_B T_{\beta}(r)- c_3d_B^2 T_{\lambda}(r)- \frac{3d_B^2}{\ell}T_{\mathbf{g}}(r).
	\end{align}

	We now treat the case that $n_1< 0$. 
	We note that in this case $d_B= \max\{-n_1,n_2\}\cdot d =-n_1d $ if we assume that $\lambda\ne\delta_i$ as in \eqref{alpha}.  In other words, the image of  $\mathbf{g}$ is not contained in $[x_1-\delta_i x_2=0]$, $1\le i\le d=\deg G$.
	By \eqref{gtildeB0}, we have 
	\begin{equation}\label{relation_GB}
		N_{G(\mathbf{g})}^{(d_B)}(0,r) \ge N^{(d_B)}_{\tilde B(\beta_0,\beta_1) }(0,r)-N^{(d_B)}_{\beta_0^{  d_B}}(0,r) 
		\ge N^{(d_B)}_{\tilde B(\beta_0,\beta_1) }(0,r)-\frac{3d_B}{\ell}T_{\mathbf{g}}(r) 
	\end{equation}
	by \eqref{beta01}.
	Since $d_B= \max\{-n_1,n_2\}\cdot d=-n_1d$ in this case, from \eqref{Tgbeta1} we have
	\begin{equation*}
		d_B T_{\beta}(r)\ge d T_{\mathbf{g}}(r) - 2d |n_1| T_{\lambda}(r).
	\end{equation*}
	Thus, we can derive from \eqref{beta02} and \eqref{relation_GB} that 
	\begin{align}\label{countingB0}
		N_{G(\mathbf{g})}(0,r)\ge_{\exc}  (d-\frac{4d^2n_1^2}{\ell})\cdot  T_{\mathbf{g}}(r)- (c_3+2 )d|n_1| T_{\lambda}(r). 
	\end{align}
	Since $\epsilon$ is sufficiently small, $ |n_1|+|n_2|\le O(\epsilon^{-1})$ and $T_{\lambda}(r)\le \epsilon^3  T_{\mathbf{g}}(r)$, by choosing $\ell>O(\epsilon^{-3})$, we arrive at
	\begin{align}\label{gcdd}
		N_{G(\mathbf{g})}(0,r)\ge_{\exc}  (1-\epsilon) d \cdot T_{\mathbf{g}}(r).
		\end{align}
	Together with (i), we have $N^{(1)}_{G(\mathbf{g})}(0,r) \ge_{\exc}  (1-2\epsilon) d \cdot T_{\mathbf{g}}(r).$
	
	For the case $n_1 \ge 0$,    we have $d_B=(n_1+n_2)d$, and 
	from \eqref{gtildeB2} we have 
	\begin{align}\label{countingG0}
		N^{(d_B)}_{G(\mathbf{g}) }(0,r)&\ge N^{(d_B)}_{\tilde B(\beta_0,\beta_1) }(0,r)-N^{(d_B)}_{\beta_0^{ n_2 d}}(0,r)-N^{(d_B)}_{\beta_1^{ n_1 d}}(0,r)\cr
		&\ge N^{(d_B)}_{\tilde B(\beta_0,\beta_1) }(0,r)-\frac{6d_B}{\ell}T_{\mathbf{g}}(r) 
	\end{align}
	by \eqref{beta01}.
	The rest of the argument to derive \eqref{gcdd} which we omit is similar to the previous one.
	
	Finally, we note that the exceptional set  $W$ consists of two types as follows. The first type is $[x_1^{n_1}x_2^{n_2}=\beta x_0^{n_1+n_2}]$, if $n_1\ge 0$; 
	$[x_2^{n_2}x_0^{-n_1-n_2}=\beta  x_1^{-n_1}]$, if $n_1< 0$,
	where $\beta\in\{\alpha_1,\hdots,\alpha_t,\gamma_1,\hdots,\gamma_s\}$ is a zero  of the resultant defined in \eqref{zeroresultant}  or a  (possible) zero  of $B(\Lambda,0)$.
	The second type is of the form $[x_1-\delta_j x_2=0]$, where $\delta_j\in \mathbb C$ are defined in \eqref{alpha}  with $n_1=-1$ and $n_2=1$. 
\end{proof}

\section{Proof of Theorem \ref{orbifoldGGL}, Theorem \ref{GG_conj} and 	 Theorem \ref{finitemorphism}}\label{others}

Theorem \ref{orbifoldGGL} is a direct consequence of Theorem \ref{main_thm_1}.  
The proof of Theorem \ref{finitemorphism} is inspired by the arguments of Corvaja and Zannier in \cite{corvaja2013algebraic} for function fields.

\subsection{Proof of Theorem \ref{orbifoldGGL}} 
\begin{proof}[Proof of Theorem \ref{orbifoldGGL}]
	After a linear change of variables, we may assume that the $H_1, H_2, H_{3}$ are the coordinate hyperplanes of $\mathbb P^2$.  
	We note that $\deg \Delta>3$ implies $\Delta_0$ is not trivial.  Therefore, 
	we may express $\Delta_0=(1-\frac 1{n_1})D_1+\hdots+(1-\frac 1{n_q})D_q$, where $D_1,\hdots,D_q$  be distinct irreducible curves  in $\mathbb P^2(\mathbb C)$ and $n_i\in(1,\infty]\cap\mathbb Q$.    For a non-constant orbifold entire curve  ${\mathbf f}  : \mathbb C \to  (\mathbb P^2,\Delta)$, we have ${\mathbf f} (\mathbb C) \not\subset  |\Delta |$ and
	${\rm mult}_t({\mathbf f} ^*D_i) \ge 2$ for all $1\le i\le q$ and all $t \in\mathbb C$ with ${\mathbf f} (t) \in D_i$ and ${\rm mult}_t({\mathbf f} ^*H_j) \ge m_j$ for all $1\le j\le 3$ and all $t \in\mathbb C$ with ${\mathbf f} (t) \in H_j$. 
	Let ${\mathbf f} =(f_0,f_1,f_2)$ be a reduced form. 
	Then the zero multiplicity of $f_i$, $0\le i\le 2$, is at least $m_i$ if it is not zero.
	Let $D_i= [G_i=0]$, $1\le i\le q$, where $G_i\in \mathbb C[x_0,x_1,x_2]$ is irreducible.  Let $G=G_1\cdots G_q$.
	Then the zero multiplicity of $G({\mathbf f} ):=G(f_0,f_1,f_2) $ at any $z_0\in\mathbb C$ is either  zero or at  least 2.
	Hence,
	\begin{align}\label{mutiG}
		N^{(1)}_{G({\mathbf f} )}(0,r)\le \frac12 N_{G({\mathbf f} )}(0,r)\le \frac12 \deg G\cdot T_{{\mathbf f} }(r)+O(1).
	\end{align}
	To apply Theorem \ref{main_thm_1}, we let $0<\epsilon<\frac13$.
	Then  there exists a proper Zariski closed subset $W$ and  a positive integer $\ell$   independent of  ${\mathbf f} $
	such that 	 if $m_i\ge\ell$ and the image of ${\mathbf f} $ is not contained in $W$,    we have  
	$$
	N^{(1)}_{G({\mathbf f} )}(0,r)\ge_{\rm exc}  (\deg  G-\epsilon)\cdot T_{{\mathbf f} }(r).
	$$
	Together with \eqref{mutiG}, it yields
	$$
	\frac12 \deg G\cdot T_{{\mathbf f} }(r)\le_{\rm exc}  \epsilon\cdot T_{{\mathbf f} }(r)+O(1),
	$$
	which is not possible since $\epsilon<\frac13$.
	This shows that the image of ${\mathbf f} $ is contained in $W$.
\end{proof}

\subsection{Proof of Theorem \ref{GG_conj}}
\begin{proof}[Proof of Theorem \ref{GG_conj}]
	Denote by $d_i$   the degree of the  irreducible homogeneous polynomial $F_i\in \mathbb C[x_0,x_1,x_2]$, $D_i:=[F_i=0]$ for $1\le i\le 3$, and 
	$\Delta= (1-\frac 1{m_1})D_1+ (1-\frac 1{m_2})D_2+(1-\frac 1{m_{3}})D_{3}$.  The condition that $\deg \Delta>3$ implies that  $\sum_{i=1}^{3}\deg F_i\ge 4$. 
	Since the hypersurfaces  $D_1$, $D_2$,  $D_3$ intersect transversally, they  do not have a common zero and the association $P\mapsto[F_{1}^{a_{1}}(P):F_{2}^{a_{2}}(P):F_{3}^{a_{3}}(P)]$,
	where $a_{i}\coloneqq{\rm lcm}(d_{1},d_{2},d_{3})/d_{i}$,
	defines a finite morphism $\pi:\PP^{2}(\mathbb C) \to\PP^{2}(\mathbb C)$.

	It is well-known that the ramification divisor of $\pi$ is  the zero locus of the determinant $J\in \mathbb C[x_0,x_1,x_2]$  
	of the Jacobian matrix 
	$$
	\big(\frac{\partial F_i^{a_i}}{\partial x_j}\big)_{1\le i\le 3, 0\le j\le 2}
	$$ 
	of $\pi$.
	Our plan is to show that there exists an irreducible factor $\tilde G$ of $J$ in $\mathbb C[x_0,x_1,x_2]$ such that the corresponding  hypersurfaces  of $\tilde G$, $D_1$, $D_2$,  $D_3$ are in  general position.
	Furthermore, we will show that  $\tilde G(\mathbf{f})$ has very few zeros and hence conclude that the image of $\mathbf{f}$ is contained in a hypersurface of bounded degree in $\mathbb P^2(\mathbb C)$ by applying Theorem \ref{SMTmoving}  for  the hypersurface defined by  $\tilde G$, $F_{1},F_2,F_{3}$.
	
	Observing that $J$ has a factor $G\in \mathbb C[x_0,x_1,x_2]$ which  denotes  the determinant of  
	\begin{align*}
		M:=\big(\frac{\partial F_i}{\partial x_j}\big)_{1\le i\le 3, 0\le j\le 2}.
	\end{align*} 
	We note that  $G$ is not a constant 
	since each $F_i$ is homogeneous and irreducible and $\sum_{i=1}^{3}\deg F_i\ge 4$.   We claim that $[G =0]$, $D_1$, $D_2$,  $D_3$ are  in general position. To prove this, it suffices to show that $G$ does not vanish at any intersection point of any 2 divisors among $D_1$, $D_2$,  $D_3$.  By rearranging the indices, it suffices to consider that $P\in D_1 \cap D_2$ and show that $G(P)\ne0$.  Since $D_1$, $D_2$,  $D_3$ have no common zeros,  we see that $F_{3} (P)\ne 0$. 
	Using the Euler formula 
	$$
	\sum_{j=1}^{3}\frac{\partial F_i}{\partial x_j}x_j=d_i \cdot F_i, 
	$$
	we obtain 
	\begin{equation*}
		\begin{split}
			x_0G =\det \begin{pmatrix}
				d_1 F_1& \frac{\partial F_1}{\partial x_1}   & \frac{\partial F_1}{\partial x_2} \\
				d_2F_2&  \frac{\partial F_2}{\partial x_1}   & \frac{\partial F_2}{\partial x_2}\\
				d_{3}F_{3} &   \frac{\partial F_3}{\partial x_1}   & \frac{\partial F_3}{\partial x_2}
			\end{pmatrix},
		\end{split}
	\end{equation*}
	and hence
	$$
	x_0(P)G(P)= d_{3}F_{3}(P)\det \left( \frac{\partial F_i}{\partial x_j}(P) \right)_{1\le i, j\le 2}.
	$$
	Since $D_1$, $D_2$,  $D_3$ intersect transversally, we see that $\det \left( \frac{\partial F_i}{\partial x_j}(P) \right)_{1\le i, j\le 2}\ne 0$.  Then $G(P)\ne 0$ as  $F_{3}(P)\ne 0$.  This proves our claim. Hence, there is a nonconstant irreducible factor $\tilde G$ of $G$ (and hence of $J$)  in $\mathbb C[x_0,x_1,x_2]$ such that $Z:=[\tilde G=0]$,  $D_{1},D_2,D_{3}$
	are in general position.
	
	Since $\pi$ is a finite morphism and $\tilde G$ is irreducible, $\pi(Z)$ is the zero locus of an irreducible homogeneous polynomial $A\in \mathbb C[y_0,y_1,y_2]$ and the vanishing order of $ \pi ^*A$ along $Z$ is at least 2.   Then this construction gives
	$\pi^*\circ   A=\tilde G^2H$ for some $H\in  \mathbb C[x_0,x_1,x_2]$. 
	Next, we verify that $\pi(Z)=[A=0]$,$ [y_0=0],  [y_1=0],  [y_2=0]$ are in general position.
	It suffices to show that none of the points $[0:0:1], [0:1:0], [1:0:0]$ is in $\pi(Z)$.
	If any of the points,  say $[0:0:1]\in \pi(Z)$, then there exists $P\in Z$ such that $F_1(P)=F_2(P)=0$, which is impossible since $Z$, $D_{1},D_2,D_{3}$ are in general position.   
	
	Now let $\mathbf{f}=(f_{0},f_{1},f_{2}):\CC\to\PP^{2}$
	be a holomorphic map, where $f_{0},f_{1},f_{2}$ are entire functions
	without common zeros, such that 
	\begin{align}\label{umap}
		\mathbf{u}\coloneqq\pi(\mathbf{f})=(F_{1}(\mathbf{f})^{a_{1}},F_{2}(\mathbf{f})^{a_{2}},F_{3}(\mathbf{f})^{a_{3}})
	\end{align}
	is a 3-tuple of entire functions with zero multiplicity at least $a_i\cdot m_i$ for the $i$-th position, $1\le i\le 3$. From the equality $ A(\mathbf{u})=(\pi^*\circ   A)(\mathbf{f})=\tilde G^2(\mathbf{f})H(\mathbf{f})$, it follows that for each $z\in\mathbb{C}$ with $v_{z}(\tilde G(\mathbf{f}))>0$, 
	we have 
	\begin{align}\label{poleH}
		v_{z}( A(\mathbf{u}))\ge 2v_{z}(\tilde G(\mathbf{f})) \ge v_{z}(\tilde G(\mathbf{f}))+1
	\end{align}
	as  $f_{0},f_{1},f_{2}$ are entire functions.  Therefore,
	\[
	N_{\tilde G(\mathbf{f})}(0,r)\le N_{A(\mathbf{u})}(0,r)-N_{A(\mathbf{u})}^{(1)}(0,r).
	\]
	Then we may apply Theorem \ref{main_thm_1}  with the nonconstant polynomial  $A\in \mathbb C[y_0,y_1,y_2]$ as it is irreducible and the zero locus is in general position with the coordinate lines.  Then for a given $\epsilon >0$, there exists a proper Zariski closed subset $W\subset \mathbb P^2$ and a (sufficiently large) positive integer $\ell_1$ such that 
	if $m_i\ge \ell_1$ for $1\le i\le 3$ and  the image of the holomorphic map $\mathbf{u} $ as in \eqref{umap} is not contained in $W$, then 
	$$
	N_{A(\mathbf{u})}(0,r)-N_{A(\mathbf{u})}^{(1)}(0,r)\le_{\exc} \frac{\epsilon}{d_1a_1} T_{\mathbf{u}}(r)=\epsilon T_{\mathbf{f}}(r).
	$$
	Therefore,
	\begin{align}\label{zeroG}
		N_{\tilde G(\mathbf{f})}(0,r)\le_{\exc} \epsilon T_{\mathbf{f}}(r) 
	\end{align}
	if the image of $\mathbf{f}$ is not contained in $\pi^{-1}(W)$.
	
	Finally, since  $[\tilde G=0]$,  $D_{1},D_2$ and $D_{3}$
	are in general position, Theorem \ref{SMTmoving}
	implies that for any    $0<\epsilon<\frac 14$  there exist two positive integers $M$ and $N$ (independent of  $\mathbf{f}$) such that 
	\begin{align}\label{applySMT1}
		\left(1-\epsilon\right)T_{\mathbf{f}}(r)&\le_{\rm exc} \frac{1}{\deg \tilde G}N_{\tilde G(\mathbf{f})}(0,r)+\sum_{j=1}^{3}\frac{1}{\deg F_{j}}N^{(M)}_{F_{j}(\mathbf{f})}(0,r), 
	\end{align}
	or the image of $\mathbf{f}$ is contained in a  plane curves with degree bounded by $N$.
	Let $m_i\ge \ell_2:=   3 M\epsilon^{-1}$.  Then 
	\begin{align}
		\frac{1}{\deg F_{j}}N^{(M)}_{F_{j}(\mathbf{f})}(0,r) \le  \frac{1}{\deg F_{j}}\frac{M}{\ell_2 } N_{F_{j}(\mathbf{f})}(0,r)\le  \frac{\epsilon}{3} \cdot T_{\mathbf{f}}(r).
	\end{align}
	Together with \eqref{zeroG}, we derive from \eqref{applySMT1}  that 
	\begin{align}\label{applySMT3}
		\left(1-\epsilon\right)T_{\mathbf{f}}(r)&\le_{\rm exc}  2 \epsilon\cdot  T_{\mathbf{f}}(r),
	\end{align}
	which is not possible since  $\epsilon<\frac 14$.
	Therefore, we conclude that if $m_i\ge \ell:=\max\{\ell_1,\ell_2\}$ for $1\le i\le 3$, then the image of $\mathbf{f}$ is contained in some plane curve of degree bounded  by $N$, where $N$ is independent of $\mathbf{f}$.
\end{proof}

\subsection{Proof of Theorem \ref{finitemorphism}}\label{ProofTheorem3}

\begin{proof}[Proof of Theorem \ref{finitemorphism}]
	Let $\pi:   X\to \mathbb P^2$  be a   finite morphism.
	Let $H_i=[x_{i}=0]$, $0\le i\le 2$,  
	$D_i$ be the support of $\pi^* H_i$  and   $\Delta= (1-\frac 1{m_0})D_0+(1-\frac 1{m_1})D_1+(1-\frac 1{m_{2}})D_{2}$.
	Let $\mathbf{f} : \mathbb C \to  ( X,\Delta) $  be a non-constant orbifold entire curve, i.e.
	$f(\mathbb C) \not\subset  |\Delta |$
	and 
	\begin{align}\label{multiplicity1}
		m_i\le {\rm mult}_t(\mathbf{f}^* {E}_i)=   {\rm mult}_t((\pi\circ \mathbf{f})^* H_i) 
	\end{align}
	for  $0\le i\le 2$ and all $t \in\mathbb C$ with $\mathbf{f}(t) \in {E}_i$,  for any component $E_i$ of $D_i$.
	Let $(f_0,f_1,f_2)$ be a reduced representation of  $\pi\circ \mathbf{f} :\mathbb C\to\mathbb P^2$, i.e. $\pi\circ \mathbf{f} =(f_0,f_1,f_2)$ and $f_0,f_1,$ and $f_2$ are entire functions with no common zeros.
	Then \eqref{multiplicity1} implies that for  $0\le i\le 2$ 
	\begin{align}\label{multiplicity2}
		{\rm mult}_t(f_i)\ge m_i \quad\text{for all $t \in\mathbb C$ with $f_i(t)=0$}.
	\end{align}

	We now recall some arguments from \cite[Lemma 1]{corvaja2013algebraic}. 
	By \cite[(1.11)]{debarre2001-higher}, the canonical divisor class  $K_{ X}$ on $ X$ can be written as 
	$ K_{ X}\sim  \pi^*(K_{\PP^2})+{\rm Ram}$, where ${\rm Ram}$ is the ramification divisor of $\pi$.
	Let ${\rm Ram}=Z+R_D$, where $R_D$ is the contribution coming from the support contained in $D$, i.e.
	$\pi^*H_1+\pi^*H_2+ \pi^*H_{3}=D+R_D$. 
	Since ${K}_{\PP^2}\sim -(H_1+H_2+H_{3})$,  we obtain 
	\begin{equation}\label{ramifidivisor}
		Z\sim  D_1+D_2+D_3+ K_{ X}. 
	\end{equation}
	Since  $(X,\Delta)$ is   of  general type, $K_X+\Delta$ is big and hence $Z$ is big as well.

	As $ \pi(Z)$ is a curve in $\mathbb P^2$,  it is the zero locus of   a   homogeneous polynomial $F \in \mathbb[x_0,x_1,x_2]$.  
	We note that $\pi(Z)=[F=0]$  and the coordinate hyperplanes $[x_i=0]$, $0\le i\le 2$ are in general position  by
	the assumption that $\pi(Z)$ does not intersect the set of points  $\{(1,0, 0),(0,1, 0),(0,0,1)\} $ in $\mathbb P^2$. 
	
	Let $Z_0$ be an irreducible component of $Z$ and  
	$F_0 $ be the irreducible factor of $F$ in $\mathbb C[x_0,x_1,x_2]$ such that its zero locus  $R_0:=[F_0=0]=\pi(Z_0)$.  Then $ \pi^*R_0$ has multiplicity at least 2 along $Z_0 $.  
	Moreover,   the vanishing order of $\mathbf{f}$ along $ \pi^*R_0$ equals the vanishing order of $\pi\circ \mathbf{f}=(f_0,f_1,f_2)$ along $R_0$.
	Let $g_U$ be a local defining function of $Z_0$ in an open set $U$ of a point $x\in Z_0$.  Then 
	\begin{align}\label{twice}
		{\rm ord}_t (F_0(f_0,f_1,f_2))\ge 2{\rm ord}_t(g_U\circ \mathbf{f}) 
	\end{align} 
	for $t\in\mathbb C$ such that $\mathbf{f}(t)=x$. 
	Therefore, the zero multiplicity of $F_0(f_0,f_1,f_2)$ at $t$ is at least twice of ${\rm mult}_t(\mathbf{f}^* Z_0)$.
	Therefore,
	\begin{align}\label{coutingzero1}
		N_{\mathbf{f}}( Z_0,r )\le N_{F_0(f_0,f_1,f_2)}(0,r)-N^{(1)}_{F_0(f_0,f_1,f_2)}(0,r).
	\end{align}
	
	We are now in position to apply Theorem \ref{main_thm_1}  for $F_0$.   Then for any 
	$\epsilon >0$, there exists a proper Zariski closed subset $W$ of $\mathbb P^2$ and  a positive integer $\ell_1$   independent of  $\mathbf{f}$
	such that 	 if $m_i\ge\ell_1$ for $i=0,1,2$ and the image of $\pi\circ \mathbf{f}$ is not contained in $W$, then 
	$N_{F_0(f_0,f_1,f_2)}(0,r)-N^{(1)}_{F_0(f_0,f_1,f_2)}(0,r) \le_{\exc} \epsilon T_{\pi\circ \mathbf{f}}(r)$.
	Therefore,   
	\begin{align}\label{zeroZ}
		N_{\mathbf{f}}( Z_0,r )\le_{\exc} \epsilon T_{\pi\circ\mathbf{f}}(r) 
	\end{align}
	if $m_i\ge\ell_1$ for $i=0,1,2$ and the image of $\pi\circ \mathbf{f}$ is not contained in $W$.

	On the other hand,  since  $[F_0=0]$,  $H_1,H_2$, and $H_3$
	are in general position, Theorem \ref{SMTmoving} implies that
	for any    $0<\epsilon<\frac 14$  there exist two positive integers $M$ and $N$ (independent of  $\mathbf{f}$) such that 
	either the image of $\pi\circ\mathbf{f}$ is contained in a curve in $\PP^2$	with degree bounded by $N$, or 
	\begin{align}\label{applySMT4}
		\left(1-\epsilon\right)T_{\pi\circ\mathbf{f}}(r)&\le_{\rm exc} \frac{1}{\deg F_0}N_{F_0(f_0,f_1,f_2)}(0,r)+\sum_{j=0}^{2} N^{(M)}_{f_{j}}(0,r)\cr
		&\le_{\rm exc} \frac{1}{\deg F_0}N_{F_0(f_0,f_1,f_2)}(0,r)+\sum_{j=0}^{2} \frac{M}{m_j}N_{f_{j}}(0,r)\cr
		&\le  \frac{1}{\deg F_0}N_{F_0(f_0,f_1,f_2)}(0,r)+ \epsilon T_{\pi\circ\mathbf{f}}(r)+O(1), 
	\end{align}
	if  $\min\{m_0,m_1,m_2\}> 3M{\epsilon}^{-1}$.

	By repeating the  above arguments  for each component of $Z$ and replacing $\epsilon$, $\ell_1$, $W$, $M$ and $N$ if necessary, then
	\eqref{applySMT4} remains valid by replacing $Z_0$ with $Z$ if the image of $ \mathbf{f}$ is not contained in $\pi^{-1}(W)$. 
	Hence we have
	\begin{align}\label{applySMT2}
		N_{F (f_0,f_1,f_2)}(0,r)\ge_{\exc}   \left(1-2\epsilon\right)\deg F \cdot T_{\pi\circ \mathbf{f}}(r) 
	\end{align}
	if $\min\{m_0,m_1,m_2\}\ge 3M{\epsilon}^{-1}$ and  the image of $\pi\circ\mathbf{f}$ is not contained in a curve in $\PP^2$	with degree bounded by $N$.
	We can derive from \eqref{applySMT2} that
	\begin{align}\label{Fproxi}
		m_{F(f_0,f_1,f_2)}(0,r)\le_{\exc}    2\epsilon  \deg F\cdot T_{\pi\circ\mathbf{f}}(r)+O(1).
	\end{align}   
	Then the functorial property,  $ \tilde { Z}\le \tilde \pi^* ([F=0])$ (as divisors)  implies that  
	\begin{align}\label{Fproxi2}
		m_{\mathbf{f}}(Z,r) \le m_{\pi\circ \mathbf{f}}  ([F=0]),r)  +O(1) \le 2 \epsilon  \deg F\cdot T_{\pi\circ \mathbf{f}}(r). 
	\end{align}
	Together with \eqref{zeroZ} for $Z$, we have
	\begin{align}\label{charfun}
		T_Z(\mathbf{f},r) \le_{\exc}    (2\deg F+1)\epsilon   \cdot T_{\pi\circ \mathbf{f}}(r) 
	\end{align}
	if  the image of $\pi\circ\mathbf{f}$ is not contained in a curve in $\PP^2$	with degree bounded by $ \max\{N,\deg W\}$.

	Let $A$ be an ample divisor on $ X$. 
	Then by \cite[Proposition 10.7]{vojta2009diophantine}, there exists a constant $c $  such that 
	\begin{equation}\label{bdd_ample}
		T_{\pi\circ \mathbf{f}}(r) =\frac 1{\deg F}  \cdot T_{ \pi^* ([F=0])}(\mathbf{f},r)+O(1)\le  c T_A(\mathbf{f},r)+O(1).
	\end{equation}
	On the other hand, since   $Z$ is big, 
	there exists  a  constant $b>0$ and a   proper Zariski-closed set   $W_0$ of $X$, depending only on $A$ and $Z$, such that 
	\begin{equation}\label{bdd_big}
		T_A(\mathbf{f},r)\le b  T_Z(\mathbf{f},r)+O(1),
	\end{equation}
	if the image of $f$ is not contained in $W_0$.
	Combining  this with  \eqref{charfun} and \eqref{bdd_ample}, it yields
	$$
	T_A(\mathbf{f},r)\le_{\exc} bc(2\deg F+1)\epsilon T_A(\mathbf{f},r)+O(1),
	$$
	which is not possible  as $b$ and $c$ are independent of   $\epsilon $ and $\epsilon $ can be taken sufficiently small. 
	In conclusion, 
	the image of $\pi\circ\mathbf{f}$ is contained in a curve in $\PP^2$	with degree bounded by $ N_1:=\max\{N,\deg W,\deg \pi(W_0) \}$.
	Since $\pi:X\to \mathbb P^2$ is a finite morphism, it implies that the image of $\mathbf{f}$   is contained in a curve of degree bounded by $N_1\cdot\deg \pi$, which is independent of $\mathbf{f}$.
\end{proof}

\subsection{Remark on Strong Green-Griffiths-Lang conjecture}\label{remarks}
We will discuss the exceptional sets for  Theorem \ref{GG_conj} and  Theorem \ref{finitemorphism} under the assumption that the multiplicities $m_i=\infty$, $1\le i\le 3$, i.e. the open case of the Green-Griffiths-Lang conjecture.

The proofs of  Theorem \ref{GG_conj} and  Theorem \ref{finitemorphism} are based on  Theorem \ref{main_thm_1} and Theorem \ref{SMTmoving}.  The exceptional set $W$ in Theorem \ref{main_thm_1} can be constructed explicitly.  So, the main point is to   find an alternative for Theorem \ref{SMTmoving}.   When $m_i=\infty$ for $1\le i\le 3$, we consider units instead of entire functions with sufficiently large multiplicities.  Therefore, we can replace Theorem \ref{SMTmoving} with the original theorem of Ru, where he basically considered counting functions without truncation.
We also note that the proof of Ru's theorem is an application of Cartan's second main theorem, which is under the assumption that the entire curves are linearly nondegenerate.  In \cite{vojta1997cartan}, Vojta has weaken the linearly nondegenerate condition to  that there exists $\mathcal H$, a finite union of  proper linear subspaces, such that the non-constant entire curves are not contained in  $\mathcal H$. 
Combining Vojta's refinement of the second main theorem, we can reformulate the result of Ru as follows.

\begin{theorem}[\cite{Ru2004}]\label{SMTRu}
	
	Let $\mathbf{f}$ be a nonconstant holomorphic map of $\CC$ into $\PP^n$.     Let $\{D_j\}$, $1\le i\le q$, be  hypersurfaces in $\PP^n(\mathbb C)$  of degree $d_i$, in general position.   Then for any $\epsilon>0$, there exist a hypersurface $Z$ in $\PP^n(\mathbb C)$ such that  the following inequality holds:
	\begin{equation*} 
		(q-n-1-\epsilon)T_{\mathbf{f}}(r)\le_{\exc} \sum_{j=1}^q\frac{1}{d_j} N_{Q_j(\mathbf{f})} (0,r),
	\end{equation*}
	if the image of $\mathbf{f}$ is contained in $Z$.	
\end{theorem}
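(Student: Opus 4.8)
The statement is, apart from the shape of the exceptional locus, Ru's second main theorem for holomorphic curves intersecting hypersurfaces \cite{Ru2004}, so the plan is to run Ru's proof while replacing its appeal to Cartan's second main theorem by Vojta's refinement \cite{vojta1997cartan}: the latter requires not that the relevant holomorphic curve be linearly non-degenerate, but only that it avoid a certain finite union of proper linear subspaces depending on the hyperplanes and on $\epsilon$ alone. It is this uniformity that upgrades the conclusion from ``$\mathbf f$ linearly non-degenerate'' to ``$\mathbf f$ not contained in a single hypersurface $Z$'', with $Z$ independent of $\mathbf f$.

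In more detail, I would first reduce to the equal-degree case: set $d:=\operatorname{lcm}(d_1,\dots,d_q)$ and replace each defining form $Q_j$ by $Q_j^{\,d/d_j}$; general position is preserved and $N_{Q_j^{d/d_j}(\mathbf f)}(0,r)=\tfrac d{d_j}N_{Q_j(\mathbf f)}(0,r)$, so it suffices to prove $(q-n-1-\epsilon)T_{\mathbf f}(r)\le_{\exc}\sum_{j=1}^q\tfrac1d N_{Q_j(\mathbf f)}(0,r)$ when all the $D_j$ have the common degree $d$. Ru's argument then fixes a large integer $m=m(\epsilon,n,d)$, filters the space $V:=H^0(\PP^n,\mathcal O(md))$ by the subspaces spanned by the monomials lying in the ideals generated by the products $Q_{i_0}^{a_0}\cdots Q_{i_n}^{a_n}$ (with $\{i_0,\dots,i_n\}$ ranging over the $(n+1)$-subsets of $\{1,\dots,q\}$ singled out, at each point, by the largest Weil values, and $a_0+\dots+a_n=m$), estimates via the Hilbert polynomial the total order of vanishing along such a filtration to be $\big(\tfrac m{n+1}+O(1)\big)\dim V$, and thereby reduces the required proximity bound for $\mathbf f$ and the $D_j$ to Cartan's second main theorem for the composite of $\mathbf f$ with the ($md$-th Veronese-type) embedding $\psi\colon\PP^n\hookrightarrow\PP^{M-1}$, $M:=\dim V$, against the $M$ coordinate hyperplanes.

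At that point I would invoke Vojta's version of Cartan's theorem in the form carrying the maximum over general-position subsets — the fixed-target analogue of Theorem \ref{movingsmt0}, with linear non-degeneracy relaxed as in \cite{vojta1997cartan}. It yields a finite union $\mathcal H\subset\PP^{M-1}$ of proper linear subspaces, depending only on $Q_1,\dots,Q_q$ and $\epsilon$, such that \emph{either} $(\psi\circ\mathbf f)(\CC)\subset\mathcal H$, \emph{or} the Cartan inequality holds for $\psi\circ\mathbf f$; pushing the latter alternative through Ru's combinatorics, together with $T_{\psi\circ\mathbf f}(r)=md\,T_{\mathbf f}(r)+O(1)$ and the first main theorem $m_{\mathbf f}(D_j,r)+N_{Q_j(\mathbf f)}(0,r)=d\,T_{\mathbf f}(r)+O(1)$, yields the asserted inequality.

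It remains to convert the first alternative into a hypersurface. Since $\psi$ is defined by the complete linear system $|\mathcal O(md)|$, its image spans $\PP^{M-1}$, so no proper linear subspace of $\PP^{M-1}$ contains $\psi(\PP^n)$; hence $\psi^{-1}(\mathcal H)$ is a proper Zariski-closed subset of $\PP^n$, and any hypersurface $Z$ containing it — for instance the union of its divisorial components together with an arbitrary hypersurface through each lower-dimensional component — does the job, being built only from $Q_1,\dots,Q_q$ and $\epsilon$. I expect the main obstacle to be bookkeeping rather than conceptual: arranging Ru's Hilbert-polynomial estimate so that the sharp leading constant $n+1$ (rather than $M$) emerges, and checking that Vojta's exceptional family $\mathcal H$ — and hence $Z$ — does not secretly depend on $\mathbf f$; both points are, however, already supplied, respectively, by \cite{Ru2004} and \cite{vojta1997cartan}.
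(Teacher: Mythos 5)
Your proposal follows exactly the route the paper itself indicates for this statement: rerun Ru's proof from \cite{Ru2004} with Cartan's second main theorem replaced by Vojta's refinement \cite{vojta1997cartan}, then pull the resulting finite union of linear subspaces back through the Veronese-type embedding to obtain the hypersurface $Z$. The paper only sketches this (it cites the two references rather than writing out the combinatorics), and your elaboration — including the reduction to equal degrees and the observation that $\psi(\PP^n)$ spans $\PP^{M-1}$ so that $\psi^{-1}(\mathcal H)$ is proper — fills in the same argument correctly.
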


Then it is clear that the use of Theorem \ref{SMTRu} allows us to find   exceptional set $W$ such that any non-constant entire curve $f$ in Theorem \ref{GG_conj} (resp.  Theorem \ref{finitemorphism}) is contained in $W$ when $m_i=\infty$ for $1\le i\le 3$.

	


\begin{thebibliography}{99}
		
		\bibitem{TruSMTAn}
		{T.~T.~H.~An and H.~T.~Phuong},
		An explicit estimate on multiplicity truncation in the second main theorem for holomorphic curves encountering hypersurfaces in general position in projective space. \emph{Houston J. Math.} \textbf{35} (2009), no. 3, 775--786.
		
		
		\bibitem{BrotbekDeng2019}
		{ D.~Brotbek and Y.~Deng},
		Kobayashi hyperbolicity of the components of 	general hypersurfaces of high degree.
		\emph{ Geometric and Functional Analysis}
		\textbf{29} (2019),  no. 4,  690--750		
		
		
		\bibitem{Campana05}
		{ F.~Campana},
		Fibres multiples sur las surfaces: aspects geometriques, hyperboliques et arithmetiques.  \emph{Manuscr. Math.} \textbf{117} (2005),      no. 4,  429--461			
		
		\bibitem{CDRousseau2020}
		{ F.~Campana,  L.~Darondeau, and E.~Rousseau} Orbifold hyperbolicity.  \emph{Compos. Math.} \textbf{156} (2020), no. 8, 1664--1698.
		
		\bibitem{corvaja2008some}
		P.~Corvaja and U.~Zannier,
		Some cases of {V}ojta's conjecture on integral points over function fields,
		\emph{J. Algebraic Geom.} \textbf{17} (2008),  no.~2, 295--333.
		
		
		\bibitem{corvaja2013algebraic} 
		{P.~Corvaja and U.~Zannier}, Algebraic hyperbolicity of ramified covers of $\GG_m^2$ (and integral points on affine subsets of $\PP^2$), \emph{J. Differential Geom.} \textbf{93} (2013), no.~3 355--377.
		
		\bibitem{debarre2001-higher}
		O.~Debarre, \emph{Higher-dimensional algebraic geometry}, \emph{Universitext}, Springer-Verlag, New York, 2001.
		
		
		
		
		\bibitem{green1974functional}
		{M.~Green}, On the functional equation
		{$f^{2}=e^{2\phi_{1}}+e^{2\phi_{2}}+e^{2\phi_{3}}\ $} and a new {P}icard theorem,  \emph{Trans. Amer. Math. Soc.} \textbf{195} (1974), 223--230.
		
		
		\bibitem{GSW20}
		{J.~Guo, C.-L.~Sun and  J.~T.-Y.~Wang}, On the $d$-th roots of exponential polynomials and related problems arising from Green-Griffiths-Lang conjecture,  J. of Geometric Analysis, \textbf{31} (2021), no.~5, 5201--5218.
		
		\bibitem{GSW}
		J.~Guo, C.-L.~Sun and J.~T.-Y.~Wang, 
		On Pisot's $d$-th root conjecture for function fields  and related GCD estimates, \emph{J. of Number Theory},  (2022),  401--432.
		
		
		
		\bibitem{GSW22}
		{J.~Guo, C.-L.~Sun and  J.~T.-Y.~Wang}, A truncated second main theorem for algebraic tori with moving targets and applications, accepted by \emph{J. Lond. Math. Soc. (2)}.
		
		
		
		
		
		\bibitem{langalgebra}
		{S.~Lang}, \emph{Algebra}, Springer-Verlag, New York, Revised Third Edition, 1993.
		
		
		\bibitem{LazarsfeldI}
		{R.~Lazarsfeld}, \emph{Positivity in algebraic geometry. I. Classical setting: line bundles and linear series.} Ergebnisse der Mathematik und ihrer Grenzgebiete. 3. Folge. A Series of Modern Surveys in Mathematics vol. 48. Springer-Verlag, Berlin, 2004
		
		\bibitem{levin2019greatest}
		{A.~Levin and J.~T.-Y.~Wang},
		Greatest common divisors of analytic functions and nevanlinna theory on
		algebraic tori,  \emph{J. Reine Angew. Math.}, \textbf{767} (2020), 77--107.
		
		
		\bibitem{noguchi2007degeneracy}
		{J.~Noguchi, J.~Winkelmann, and K.~Yamanoi}, Degeneracy of holomorphic curves into algebraic varieties,  \emph{J. Math. Pures Appl. (9)} \textbf{88} (2007),  no.~3, 293--306.
		
		\bibitem{noguchi2008semiabelian}
		{J.~Noguchi, J.~Winkelmann, and K.~Yamanoi}, The second main theorem for holomorphic curves into semi-abelian varieties II,  \emph{Forum Math.  } \textbf{20} (2007),   469--503.
		
		
		
		
		
		\bibitem{RTW2021}
		{ E.~Rousseau and A.~ Turchet and J.~T.-Y.~Wang},  Nonspecial varieties and generalized Lang-Vojta conjectures,  \emph{Forum of Mathematics, Sigma} \textbf{9} (2021), e11.
		
		\bibitem{Ru2004}
		{M.~Ru},  A defect relation for holomorphic curves intersecting hypersurfaces, \emph{Amer. Journal of Math.} \textbf{126} (2004), 215--226.
		
		
		
		\bibitem{ru2021nevanlinna}
		{M.~Ru}, \emph{Nevanlinna theory and its relation to {D}iophantine approximation}, World Scientific Publishing Co., Pte. Ltd., Hackensack, NJ, 2021.
		
		\bibitem{ru2004truncated}
		{M.~Ru and J.~T.-Y.~Wang},
		Truncated second main theorem with moving targets,  \emph{Trans. Amer. Math. Soc.} \textbf{356} (2004),  no.~2, 557--571.
		
		
		
		
		
		\bibitem{vojta1997cartan}
		{P.~Vojta},
		On Catan's theorem and Cartan's conjecture, \emph{American Journal of Mathematics}, {\bf 119} (1997), no. 1, 1--17.
		
		
		
		\bibitem{vojta2009diophantine}
		{P.~Vojta}, Diophantine approximation and {N}evanlinna theory,  in \emph{Arithmetic geometry}, \emph{Lecture Notes in Math.} \textbf{2009}, Springer, Berlin, 2011, 111--224.
		
		
		\bibitem{waerden1967} 	{B.~L.~van~der~Waerden}, \emph{Moderne {A}lgebra}, vol. 2, 5th ed., Springer-Verlag, Berlin, 1967; English transl., Ungar, New York, 1970. 
		
		\bibitem{Wirsing}   {J.~T.-Y.~Wang}, Cartan's conjecture with moving targets of same growth and effective Wirsing's theorem over function fields, \emph{Math. Z}. \textbf{234} (2000), 739--754.
		
		\bibitem{Wa2004}  {J.~T.-Y.~Wang}, An effective Schmidt's
		subspace theorem over function fields, \emph{Math. Z}. \textbf{246} (2004), no. 4, 811--844.
		
		
		\bibitem{Yamanoi2004}
		{ K.~Yamanoi},   
		The second main theorem for small functions and related problems. \emph{Acta Math.} \textbf{192} (2004), no. 2, 225--294
		
		
		
	\end{thebibliography}
\end{document}